\begin{document}

\restylefloat{table}
\newtheorem{thm}[equation]{Theorem}
\numberwithin{equation}{section}
\newtheorem{cor}[equation]{Corollary}
\newtheorem{expl}[equation]{Example}
\newtheorem{rmk}[equation]{Remark}
\newtheorem{conv}[equation]{Convention}
\newtheorem{claim}[equation]{Claim}
\newtheorem{lem}[equation]{Lemma}
\newtheorem{sublem}[equation]{Sublemma}
\newtheorem{conj}[equation]{Conjecture}
\newtheorem{defin}[equation]{Definition}
\newtheorem{diag}[equation]{Diagram}
\newtheorem{prop}[equation]{Proposition}
\newtheorem{notation}[equation]{Notation}
\newtheorem{tab}[equation]{Table}
\newtheorem{fig}[equation]{Figure}
\newcounter{bean}
\renewcommand{\theequation}{\thesection.\arabic{equation}}

\raggedbottom \voffset=-.7truein \hoffset=0truein \vsize=8truein
\hsize=6truein \textheight=8truein \textwidth=6truein
\baselineskip=18truept

\def\mapright#1{\ \smash{\mathop{\longrightarrow}\limits^{#1}}\ }
\def\mapleft#1{\smash{\mathop{\longleftarrow}\limits^{#1}}}
\def\mapup#1{\Big\uparrow\rlap{$\vcenter {\hbox {$#1$}}$}}
\def\mapdown#1{\Big\downarrow\rlap{$\vcenter {\hbox {$\ssize{#1}$}}$}}
\def\mapne#1{\nearrow\rlap{$\vcenter {\hbox {$#1$}}$}}
\def\mapse#1{\searrow\rlap{$\vcenter {\hbox {$\ssize{#1}$}}$}}
\def\mapr#1{\smash{\mathop{\rightarrow}\limits^{#1}}}
\def\ss{\smallskip}
\def\s{\sigma}
\def\l{\lambda}
\def\vp{v_1^{-1}\pi}
\def\at{{\widetilde\alpha}}

\def\sm{\wedge}
\def\la{\langle}
\def\ra{\rangle}
\def\ev{\text{ev}}
\def\od{\text{od}}
\def\on{\operatorname}
\def\ol#1{\overline{#1}{}}
\def\spin{\on{Spin}}
\def\cat{\on{cat}}
\def\Lbar{\overline{\Lambda}}
\def\qed{\quad\rule{8pt}{8pt}\bigskip}
\def\ssize{\scriptstyle}
\def\a{\alpha}
\def\bz{{\Bbb Z}}
\def\Rhat{\hat{R}}
\def\im{\on{im}}
\def\ct{\widetilde{C}}
\def\ext{\on{Ext}}
\def\sq{\on{Sq}}
\def\eps{\epsilon}
\def\ar#1{\stackrel {#1}{\rightarrow}}
\def\br{{\bold R}}
\def\bC{{\bold C}}
\def\bA{{\bold A}}
\def\bB{{\bold B}}
\def\bD{{\bold D}}
\def\bC{{\bold C}}
\def\bh{{\bold H}}
\def\bQ{{\bold Q}}
\def\bP{{\bold P}}
\def\bx{{\bold x}}
\def\bo{{\bold{bo}}}
\def\dh{\widehat{d}}
\def\si{\sigma}
\def\Vbar{{\overline V}}
\def\dbar{{\overline d}}
\def\wbar{{\overline w}}
\def\Sum{\sum}
\def\tfrac{\textstyle\frac}

\def\tb{\textstyle\binom}
\def\Si{\Sigma}
\def\w{\wedge}
\def\equ{\begin{equation}}
\def\b{\beta}
\def\G{\Gamma}
\def\L{\Lambda}
\def\g{\gamma}
\def\d{\delta}
\def\k{\kappa}
\def\psit{\widetilde{\Psi}}
\def\tht{\widetilde{\Theta}}
\def\psiu{{\underline{\Psi}}}
\def\thu{{\underline{\Theta}}}
\def\aee{A_{\text{ee}}}
\def\aeo{A_{\text{eo}}}
\def\aoo{A_{\text{oo}}}
\def\aoe{A_{\text{oe}}}
\def\vbar{{\overline v}}
\def\endeq{\end{equation}}
\def\sn{S^{2n+1}}
\def\zp{\bold Z_p}
\def\cR{{\mathcal R}}
\def\P{{\mathcal P}}
\def\cQ{{\mathcal Q}}
\def\cj{{\cal J}}
\def\zt{{\bold Z}_2}
\def\bs{{\bold s}}
\def\bof{{\bold f}}
\def\bq{{\bold Q}}
\def\be{{\bold e}}
\def\Hom{\on{Hom}}
\def\ker{\on{ker}}
\def\kot{\widetilde{KO}}
\def\coker{\on{coker}}
\def\da{\downarrow}
\def\colim{\operatornamewithlimits{colim}}
\def\zphat{\bz_2^\wedge}
\def\io{\iota}
\def\om{\omega}
\def\Prod{\prod}
\def\e{{\cal E}}
\def\zlt{\Z_{(2)}}
\def\exp{\on{exp}}
\def\abar{{\overline a}}
\def\xbar{{\overline x}}
\def\ybar{{\overline y}}
\def\zbar{{\overline z}}
\def\mbar{{\overline m}}
\def\nbar{{\overline n}}
\def\sbar{{\overline s}}
\def\kbar{{\overline k}}
\def\bbar{{\overline b}}
\def\et{{\widetilde E}}
\def\ni{\noindent}
\def\tsum{\textstyle \sum}
\def\coef{\on{coef}}
\def\den{\on{den}}
\def\lcm{\on{l.c.m.}}
\def\Ext{\operatorname{Ext}}
\def\iso{\approx}
\def\lra{\longrightarrow}
\def\vi{v_1^{-1}}
\def\ot{\otimes}
\def\psibar{{\overline\psi}}
\def\thbar{{\overline\theta}}
\def\mhat{{\hat m}}
\def\exc{\on{exc}}
\def\ms{\medskip}
\def\ehat{{\hat e}}
\def\etao{{\eta_{\text{od}}}}
\def\etae{{\eta_{\text{ev}}}}
\def\dirlim{\operatornamewithlimits{dirlim}}
\def\gt{\widetilde{L}}
\def\lt{\widetilde{\lambda}}
\def\st{\widetilde{s}}
\def\ft{\widetilde{f}}
\def\sgd{\on{sgd}}
\def\lfl{\lfloor}
\def\rfl{\rfloor}
\def\ord{\on{ord}}
\def\gd{{\on{gd}}}
\def\rk{{{\on{rk}}_2}}
\def\nbar{{\overline{n}}}
\def\MC{\on{MC}}
\def\lg{{\on{lg}}}
\def\cH{\mathcal{H}}
\def\cS{\mathcal{S}}
\def\cP{\mathcal{P}}
\def\N{{\Bbb N}}
\def\Z{{\Bbb Z}}
\def\Q{{\Bbb Q}}
\def\R{{\Bbb R}}
\def\C{{\Bbb C}}
\def\Lb{\overline\Lambda}
\def\mo{\on{mod}}
\def\xt{\times}
\def\notimm{\not\subseteq}
\def\Remark{\noindent{\it  Remark}}
\def\kut{\widetilde{KU}}
\def\Eb{\overline E}
\def\*#1{\mathbf{#1}}
\def\0{$\*0$}
\def\1{$\*1$}
\def\22{$(\*2,\*2)$}
\def\33{$(\*3,\*3)$}
\def\ss{\smallskip}
\def\ssum{\sum\limits}
\def\dsum{\displaystyle\sum}
\def\la{\langle}
\def\ra{\rangle}
\def\on{\operatorname}
\def\proj{\on{proj}}
\def\od{\text{od}}
\def\ev{\text{ev}}
\def\o{\on{o}}
\def\U{\on{U}}
\def\lg{\on{lg}}
\def\a{\alpha}
\def\bz{{\Bbb Z}}
\def\eps{\varepsilon}
\def\bc{{\bold C}}
\def\bN{{\bold N}}
\def\bB{{\bold B}}
\def\bW{{\bold W}}
\def\nut{\widetilde{\nu}}
\def\tfrac{\textstyle\frac}
\def\b{\beta}
\def\G{\Gamma}
\def\g{\gamma}
\def\zt{{\Bbb Z}_2}
\def\zth{{\bold Z}_2^\wedge}
\def\bs{{\bold s}}
\def\bx{{\bold x}}
\def\bof{{\bold f}}
\def\bq{{\bold Q}}
\def\be{{\bold e}}
\def\lline{\rule{.6in}{.6pt}}
\def\xb{{\overline x}}
\def\xbar{{\overline x}}
\def\ybar{{\overline y}}
\def\zbar{{\overline z}}
\def\ebar{{\overline \be}}
\def\nbar{{\overline n}}
\def\ubar{{\overline u}}
\def\bbar{{\overline b}}
\def\et{{\widetilde e}}
\def\M{\mathcal{M}}
\def\lf{\lfloor}
\def\rf{\rfloor}
\def\ni{\noindent}
\def\ms{\medskip}
\def\Dhat{{\widehat D}}
\def\what{{\widehat w}}
\def\Yhat{{\widehat Y}}
\def\abar{{\overline{a}}}
\def\minp{\min\nolimits'}
\def\sb{{$\ssize\bullet$}}
\def\mul{\on{mul}}
\def\N{{\Bbb N}}
\def\Z{{\Bbb Z}}
\def\S{\Sigma}
\def\Q{{\Bbb Q}}
\def\R{{\Bbb R}}
\def\C{{\Bbb C}}
\def\Xb{\overline{X}}
\def\eb{\overline{e}}
\def\notint{\cancel\cap}
\def\cS{\mathcal S}
\def\cR{\mathcal R}
\def\el{\ell}
\def\TC{\on{TC}}
\def\GC{\on{GC}}
\def\wgt{\on{wgt}}
\def\Ht{\widetilde{H}}
\def\wbar{\overline w}
\def\dstyle{\displaystyle}
\def\Sq{\on{sq}}
\def\Om{\Omega}
\def\ds{\dstyle}
\def\tz{tikzpicture}
\def\zcl{\on{zcl}}
\def\bd{\bold{d}}
\def\cM{\mathcal{M}}
\def\io{\iota}
\def\Vb#1{{\overline{V_{#1}}}}
\def\Ebar{\overline{E}}
\def\lb{\,\begin{picture}(-1,1)(1,-1)\circle*{4.5}\end{picture}\ }
\def\lbb{\,\begin{picture}(-1,1)(1,-1)\circle*{8}\end{picture}\ }
\def\zp{\Z_p}

\title
{The  connective $K$-theory of the Eilenberg-MacLane space $K(\zp,2)$}
\author{Donald M. Davis}
\address{Department of Mathematics, Lehigh University\\Bethlehem, PA 18015, USA}
\email{dmd1@lehigh.edu}
\author{W. Stephen Wilson}
\address{Department of Mathematics, Johns Hopkins University\\Baltimore, MD 01220, USA}
\email{wwilson3@jhu.edu}
\date{February 10, 2023}

\keywords{Adams spectral sequence, connective $K$-theory, Eilenberg-MacLane spaces}
\thanks {2000 {\it Mathematics Subject Classification}: 55T15 55N20, 55N15.}

\maketitle

\begin{abstract} We compute $ku^*(K(\zp,2))$ and $ku_*(K(\zp,2))$, the connective $KU$-cohomology and connective $KU$-homology groups of the mod-$p$ Eilenberg-MacLane space $K(\zp,2)$, using the Adams spectral sequence. We obtain a striking interaction between $h_0$-extensions and exotic extensions. The mod-$p$ connective $KU$-cohomology groups, computed elsewhere, are needed in order to establish higher differentials and exotic extensions in the integral groups.
\end{abstract}

\section{Main results}\label{intro}
In \cite{W} and \cite{DW}, the authors initiated a partial computation of the  connective $KU$-homology groups, $ku_*(K(\zt,2))$, of the mod-2 Eilenberg-MacLane space $K(\zt,2)$ in separate studies of Stiefel-Whitney classes of manifolds. We eventually turned to the associated cohomology groups, $ku^*(K(\zt,2))$, and were able to give a complete determination, via the Adams spectral sequence (ASS). This generalized nicely to the odd primes, and then we found a duality result (\cite{DD}) relating these homology and cohomology groups which enabled us to determine the homology groups $ku_*(K(\zp,2))$.

Let $K_2=K(\zp,2)$, with the prime $p$ implicit. We begin with a description of the $ku^*$-module $ku^*(K_2)$. Note that $ku^*=\Z_{(p)}[v]$ with $|v|=-2(p-1)$. We find that depiction via ASS charts is the most insightful way to envision the groups. There is a very nice interplay between extensions (multiplication by $p$) seen in Ext ($h_0$-extensions) and exotic extensions.
 We depict the ASS with cohomological (co)degrees increasing from right-to-left. We write $|x|=d$ if $x\in ku^d(K_2)$ or the associated $E_2$-term.

In $ku^*(K_2)$, there is a trivial submodule whose Poincar\'e series when $p=2$ is described at the end of Section \ref{E2sec}. It plays no role and  {\bf will be ignored from now on}. As a $ku^*$-module, $ku^*(K_2)$ is generated by certain products of elements of $E_2^{0}$, $y_0$, $y_i=y_0^{p^i}$, with $|y_i|=2p^i$, $z_j$ for $j\ge0$ with $|z_j|=2(p^{j+1}+1)$, and $q$ with $|q|=9$ if $p=2$, and $|q|=4p-1$ if $p$ is odd.

The even-graded part $ku^{\text{ev}}(K_2)$ is formed from shifted copies of $ku^*$-modules $A_k$ and $B_k$, which can be defined inductively as follows.
\begin{defin}\label{ABdef}
Let $k_0=1$ if $p$ is odd, and $k_0=2$ if $p=2$. Let $B_{k_0-1}=0$.
 Let $A_0=\langle z_0\rangle$ for all $p$. Inductively
$$B_k\text{ is built from }z_{k-1}^{p-1}B_{k-1},\ TP_{p^k-k}[v]z_k, \text{ and }y_{k-1}^{p-1}B_{k-1},\text{ if }k\ge k_0$$
and
$$A_k\text{ is built from }z_{k-1}^{p-1}B_{k-1},\ TP_{p^k}[v]z_k, \text{ and }y_{k-1}^{p-1}A_{k-1},\text{ if }k\ge1$$
with extensions determined by
\begin{equation}\label{extns}pz_k=vz_{k-1}^p\text{ for $k\ge2$, and }py_{k-1}^{p-1}z_{k-1}=v^{p^{k-1}(p-1)}z_{k}.\end{equation}
\end{defin}
Here $TP_i[v]$ is the truncated polynomial algebra over $\zp$ with generator $v$ and relation $v^i$.
 When we write something like $zB$, we mean that all elements of $B$ are multiplied by the element $z$.
Saying ``is built from'' means that these are successive quotients in a filtration as a $ku^*$-module. The extension formulas are only asserted up to multiplication by a unit is $\zp$, and can both occur on an element. For example, in Figure \ref{B7}, we have, in grading 116 when $p=2$, $2y_3z_3z_4=vy_3z_2^2z_4+v^8z_4^2$.

Figure \ref{B7} should enable the reader to envision $A_k$ and $B_k$ for $p=2$ and $k\le5$, and, by extrapolating, for all $k$. Elements connected by dashed lines are in $A_5$ but not in $B_5$. The long red lines, sometimes slightly curved, are the exotic extensions. The portion in gradings $\le102$, not including the top $v$-tower or the extensions to it, is $y_4A_4$ (or $y_4B_4$ if the dashed part is omitted). The portion in gradings $\ge106$, not including the $v$-tower on $z_5$ or the $h_0$-extensions from it, is $z_4B_4$. The reader is encouraged to understand how the case $k=5$ of Definition \ref{ABdef} is embodied in Figure \ref{B7}.

The portion in the lower right corner of Figure \ref{B7} in grading $\le 84$ and height $\le7$ is $y_3y_4A_3$, and $y_2y_3y_4A_2$ is in gradings $\le 74$. In Figure \ref{oddchart}, we present a schematic of $A_3$ and $B_3$ at the odd primes. Again the dashed portion is in $A_3$, but not $B_3$, and the triangle in the lower right portion is $y_1^{p-1}y_2^{p-1}A_1$.

 A generating set as a $\zp[v]$-module for $B_k$ is
\begin{equation}\label{Bk}\biggl\{z_j\prod_{i=j}^{k-1}\{z_i^{p-1},y_i^{p-1}\}:\ k_0\le j\le k\biggr\},\end{equation}
while $A_k$ has  additional generators
$$\begin{cases}z_1y_1\cdots y_{k-1}&p=2\\ z_0y_0^{p-1}\cdots y_{k-1}^{p-1}&\text{all }p.\end{cases}$$
The notation here means a product over all choices of one of the two elements in each factor. For example, $\prod_{i=1}^2\{z_i^{p-1},y_i^{p-1}\}=z_1^{p-1}z_2^{p-1}+z_1^{p-1}y_2^{p-1}+y_1^{p-1}z_2^{p-1}+y_1^{p-1}y_2^{p-1}$.
An empty product is defined to equal 1.

The following theorem explains how the portion of $ku^*(K_2)$ in even gradings is a direct sum of shifted versions of $A_k$ and $B_k$.
\begin{thm}\label{evthm}
Let $M_p[S]$ denote the set of monomials in the elements of a set $S$ raised to powers $<p$. Let
\begin{equation}\label{Mk}\M_k=(M_p[z_k,y_k]-\{z_k^{p-1},y_k^{p-1}\})\cdot M_p[z_i,y_i:\ i>k].\end{equation}
Let $\M_k^A$ be the set of monomials in $\M_k$ with no $z$-factors, and $\M_k^B=\M_k-\M_k^A$. Then
$$ku^{\text{ev}}(K_2)=\bigoplus_{k\ge1}\biggl(\bigoplus_{M\in\M_k^A}M\cdot A_k\oplus\bigoplus_{M\in \M_k^B}M\cdot B_k\biggr)$$
plus a trivial $ku^*$-module.\end{thm}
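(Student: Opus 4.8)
The plan is to run the Adams spectral sequence converging to $ku^*(K_2)$ and to match its output, column by column in the $v$-periodic direction, with the explicitly-described modules $A_k$ and $B_k$. The starting point is the $E_2$-term $\Ext_{\mathcal A}(H^*(K_2),\Z_p)$, which is computed in Section \ref{E2sec} (the section the excerpt refers to for the Poincar\'e series of the trivial summand); as a module over $\Ext(\Z_p,\Z_p)$ it decomposes into towers and finite pieces indexed by the admissible monomials in the $y_i$'s and $z_j$'s, and this is exactly the bookkeeping encoded by the sets $\M_k$, $\M_k^A$, $\M_k^B$ in \eqref{Mk}. So the first step is to establish that, as a bigraded $\Z_p$-module, $E_2$ is the direct sum over $k\ge1$ of the $M$-shifted copies of the $E_2$-terms for $A_k$ and $B_k$, together with the trivial summand. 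This is a purely algebraic identification of the cohomology of $K(\zp,2)$ as a module over the Steenrod algebra (using the known structure of $H^*(K(\zp,2);\Z_p)$, i.e.\ a polynomial/exterior algebra on admissible Milnor-basis-type generators), followed by a change-of-rings / minimal-resolution computation.

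The second step is to run the differentials. Because $ku$ has only $v$ in negative degree and the $E_2$-term is, in each ``block,'' a sum of $h_0$-towers truncated at controlled heights, the differentials are forced in most columns by sparseness; the nontrivial input is the higher differentials, and here is where the paper's stated strategy — importing the already-known computation of the \emph{mod-$p$} groups $ku^*(K_2;\Z_p)$ — enters. Comparing the integral ASS with the mod-$p$ Bockstein-type spectral sequence pins down the lengths of the $v$-towers, which are precisely the truncation indices $p^k-k$ and $p^k$ appearing in $TP_{p^k-k}[v]z_k$ and $TP_{p^k}[v]z_k$ in Definition \ref{ABdef}. Concretely one shows inductively on $k$ that the subquotient of $E_\infty$ carried by the monomials of $\M_k$ agrees with the associated graded of $M\cdot A_k$ (resp.\ $M\cdot B_k$), the inductive clause matching the inductive definition ``$B_k$ is built from $z_{k-1}^{p-1}B_{k-1}$, $TP_{p^k-k}[v]z_k$, $y_{k-1}^{p-1}B_{k-1}$'' and similarly for $A_k$.

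The third step is to resolve the extension problems, i.e.\ to pass from $E_\infty$ to the actual $ku^*$-module structure. There are two kinds: the $h_0$-extensions (multiplication by $p$) visible already in $\Ext$, and the \emph{exotic} extensions (the long, sometimes curved, red lines in Figure \ref{B7}). The $h_0$-extensions give the relations $pz_k=vz_{k-1}^p$ and the exotic ones give $py_{k-1}^{p-1}z_{k-1}=v^{p^{k-1}(p-1)}z_k$ of \eqref{extns}; these must be verified up to a $\Z_p$-unit, which is all that is claimed. The exotic extensions are not seen on $E_\infty$ and must be detected by an auxiliary argument — again the mod-$p$ computation is the lever: an exotic $p$-extension in the integral ASS corresponds to a nonzero higher Bockstein (equivalently a hidden $v$-multiple) detectable mod $p$, and the interplay described in the abstract (``$h_0$-extensions and exotic extensions'') is exactly the statement that on a given generator $y_{k-1}^{p-1}z_{k-1}$ one may have \emph{both} an $h_0$-extension and an exotic one, as in the grading-$116$ example. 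Once all extensions are identified one checks that the resulting $ku^*$-module built from the blocks is isomorphic to $\bigoplus_k(\bigoplus_{\M_k^A}M A_k\oplus\bigoplus_{\M_k^B}M B_k)$ plus the trivial module, completing the proof.

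I expect the main obstacle to be the second and third steps together: proving that the $v$-towers have exactly the stated finite lengths (the higher differentials) and that the exotic extensions are exactly as in \eqref{extns} and no others. Both rely essentially on comparison with the mod-$p$ connective $K$-theory computed elsewhere, so the real work is setting up that comparison carefully enough to transfer differential-lengths and hidden extensions, and then organizing the induction on $k$ so that the combinatorics of $\M_k$, $\M_k^A$, $\M_k^B$ lines up with the inductive clauses of Definition \ref{ABdef}. The bookkeeping — keeping track of which shifted copy $M\cdot A_k$ or $M\cdot B_k$ a given family of dots belongs to, across the filtration — is where an error would most likely hide, so I would present Figure \ref{B7} and Figure \ref{oddchart} as the organizing devices and verify the induction against them.
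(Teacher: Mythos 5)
Your proposal is correct and follows essentially the same route as the paper: compute $E_2$ by change of rings to $\Ext_{E_1}$, pin down the $v$-tower lengths (higher differentials) and the exotic extensions by comparison with the mod-$p$ theory $k(1)^*(K_2)$ via the Bockstein-type exact sequence (\ref{LES}), then match the surviving monomials combinatorially with $\M_k^A$ and $\M_k^B$, verifying the absence of further extensions by showing every element of $k(1)^*(K_2)$ is accounted for exactly once. The only inputs you do not anticipate are the two external facts the paper also needs to get the differentials started — Tamanoi's theorem that the $z_j$ are permanent cycles (being in the image from $BP^*$) and Browder's computation of $H^{2pi+1}(K_2;\Z)$ for the first family $d_{\nu(i)+2}(y_1^i)$ — but these fit naturally into the framework you describe.
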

Note that the monomial 1 is in $\M_k^A$, so $A_k$ appears by itself, but $B_k$ does not. For example, if $p=2$, copies of $B_k$ appear multiplied by each monomial of the form
$$z_k^{\eps_k}y_k^{\d_k}z_{k+1}^{\eps_{k+1}}y_{k+1}^{\d_{k+1}}\cdots\text{ such that }\eps_k=\d_k\text{ and}\sum\eps_i\ge1.$$

\tikzset{
  testpic/.pic=
{\draw (0,0) -- (1,1) -- (1,0) -- (5,4) -- (5,2) -- (29,26);
\draw [dashed] (29,26) -- (34,28);
\draw [dashed] [color=red] (34,28) -- (34,0);
\draw (0,0) -- (34,0);
\draw (3,2) -- (3,0);
\draw (4,1) -- (4,3);
\draw [color=red] (5,0) to[out=98, in=262] (5,4);
\draw [color=red] (10,0) to[out=98, in=262] (10,8);
\draw [color=red] (11,1) to[out=98, in=262] (11,9);
\draw [color=red] (12,2) to[out=98, in=262] (12,10);
\draw [color=red] (13,3) to[out=98, in=262] (13,11);
\draw [color=red] (22,0) to[out=98, in=262] (22,4);
\draw [color=red] (22,3) to[out=83, in=270] (22,19);
\draw [color=red] (21,2) to[out=83, in=270] (21,18);
\draw [color=red] (20,1) to[out=83, in=270] (20,17);
\draw [color=red] (19,0) to[out=83, in=270] (19,16);
\draw [color=red] (27,0) to[out=83, in=270] (27,8);
\draw [color=red] (14,0) -- (14,4);
\draw [color=red] (23,4) -- (23,20);
\draw [color=red] (24,5) -- (24,21);
\draw [color=red] (25,6) -- (25,22);
\draw [color=red] (26,7) -- (26,23);
\draw [color=red] (27,8) -- (27,24);
\draw [color=red] (28,1) -- (28,25);
\draw [color=red] (29,2) -- (29,26);
\draw [color=red] (30,3) -- (30,11);
\draw [color=red] (31,0) -- (31,4);
\node at (0,0) {\lb};
\node at (1,0) {\lb};
\node at (1,1) {\lb};
\node at (2,1) {\lb};
\node at (3,2) {\lb};
\node at (4,3) {\lb};
\node at (5,4) {\lb};
\node at (2,0) {\lb};
\node at (3,1) {\lb};
\node at (4,2) {\lb};
\node at (5,3) {\lb};
\node at (6,4) {\lb};
\node at (7,5) {\lb};
\node at (8,6) {\lb};
\node at (9,7) {\lb};
\node at (10,8) {\lb};
\node at (11,9) {\lb};
\node at (12,10) {\lb};
\node at (13,11) {\lb};
\node at (3,0) {\lb};
\node at (4,1) {\lb};
\node at (5,2) {\lb};
\node at (6,3) {\lb};
\node at (7,4) {\lb};
\node at (8,5) {\lb};
\node at (9,6) {\lb};
\node at (10,7) {\lb};
\node at (11,8) {\lb};
\node at (12,9) {\lb};
\node at (13,10) {\lb};
\node at (14,11) {\lb};
\node at (15,12) {\lb};
\node at (16,13) {\lb};
\node at (17,14) {\lb};
\node at (18,15) {\lb};
\node at (19,16) {\lb};
\node at (20,17) {\lb};
\node at (21,18) {\lb};
\node at (22,19) {\lb};
\node at (23,20) {\lb};
\node at (24,21) {\lb};
\node at (25,22) {\lb};
\node at (26,23) {\lb};
\node at (27,24) {\lb};
\node at (28,25) {\lb};
\node at (29,26) {\lb};
\node at (30,26.4) {\lb};
\node at (31,26.8) {\lb};
\node at (32,27.2) {\lb};
\node at (33,27.6) {\lb};
\node at (34,28) {\lb};
\node at (5,0) {\lb};
\node at (6,1) {\lb};
\node at (9,0) {\lb};
\node at (10,1) {\lb};
\node at (10,0) {\lb};
\node at (11,1) {\lb};
\node at (12,2) {\lb};
\node at (13,3) {\lb};
\node at (14,4) {\lb};
\node at (14,0) {\lb};
\node at (15,1) {\lb};
\node at (17,0) {\lb};
\node at (18,1) {\lb};
\node at (18,0) {\lb};
\node at (19,1) {\lb};
\node at (20,2) {\lb};
\node at (21,3) {\lb};
\node at (22,4) {\lb};
\node at (19,0) {\lb};
\node at (20,1) {\lb};
\node at (21,2) {\lb};
\node at (22,3) {\lb};
\node at (23,4) {\lb};
\node at (24,5) {\lb};
\node at (25,6) {\lb};
\node at (26,7) {\lb};
\node at (27,8) {\lb};
\node at (28,9) {\lb};
\node at (29,10) {\lb};
\node at (30,11) {\lb};
\node at (22,0) {\lb};
\node at (23,1) {\lb};
\node at (26,0) {\lb};
\node at (27,1) {\lb};
\node at (27,0) {\lb};
\node at (28,1) {\lb};
\node at (29,2) {\lb};
\node at (30,3) {\lb};
\node at (31,4) {\lb};
\node at (31,0) {\lb};
\node at (32,1) {\lb};
\node at (33,2) {\lb};
\node at (33,0) {\lb};
\node at (34,0) {\lb};
\node at (34,1) {\lb};
\node at (34,3) {\lb};
\node at (32,5) {\lb};
\node at (33,6) {\lb};
\node at (34,7) {\lb};
\node at (31,12) {\lb};
\node at (32,13) {\lb};
\node at (33,14) {\lb};
\node at (34,15) {\lb};

\draw (17,0) -- (18,1) -- (18,0) -- (22,4);
\draw (19,0) -- (30,11);
\draw (19,0) -- (19,1) -- (20,2) -- (20,1) -- (21,2) -- (21,3) -- (22,4) -- (22,3);

\draw (2,1) -- (2,0) -- (13,11) -- (13,10);
\draw (6,3) -- (6,4) -- (7,5) -- (7,4) -- (8,5) -- (8,6) -- (9,7) -- (9,6) -- (10,7) -- (10,8) -- (11,9) -- (11,8) -- (12,9) -- (12,10);
\draw (3,0) -- (5,2);
\draw (9,0) -- (10,1) -- (10,0) -- (14,4);
\draw (5,0) -- (6,1);
\draw (14,0) -- (15,1);
\draw (22,0) -- (23,1);
\draw (26,0) -- (27,1) -- (27,0) -- (31,4);
\draw (31,0) -- (32,1);
\draw [dashed] [color=red] (33,0) -- (33,27.6);
\draw [dashed] [color=red] (32,1) -- (32,27.2);
\draw [dashed] [color=red] (31,4) -- (31,26.8);
\draw [dashed] [color=red] (30,11) -- (30,26.4);
\draw [dashed]  (33,0) -- (34,1);
\draw [dashed]  (32,1) -- (34,3);
\draw [dashed]  (31,4) -- (34,7);
\draw [dashed]   (30,11) -- (34,15);
\node at (0,-.4) {$136$};
\node at (3,-.4) {$130$};
\node at (0,-.9) {$z_2^2z_3z_4$};
\node at (3,-.9) {$z_5$};
\node at (2,-.9) {$z_4^2$};
\node at (5,-.4) {$126$};
\node at (5,-.9) {$y_2z_2z_3z_4$};
\node at (10,-.4) {$116$};
\node at (10,-.9) {$y_3z_3z_4$};
\node at (14,-.4) {$108$};
\node at (14,-.9) {$y_2y_3z_3z_4$};
\node at (17,-.4) {$102$};
\node at (17,-.9) {$y_4z_2^2z_3$};
\node at (19,-.4) {$98$};
\node at (19,-.9) {$y_4z_4$};
\node at (22,-.4) {$92$};
\node at (22,-.9) {$y_2y_4z_2z_3$};
\node at (27,-.9) {$y_3y_4z_3$};
\node at (27,-.4) {$82$};
\node at (31,-.4) {$74$};
\node at (31,-.9) {$y_2y_3y_4z_2$};
\node at (34,-.4) {$68$};
\node at (34,-.9) {$y_0y_1y_2y_3y_4z_0$};
}}

\begin{minipage}{6in}
\begin{fig}\label{B7}

{\bf $B_5$ and $A_5$ when $p=2$.}
\begin{center}
\begin{tikzpicture}
  \pic[rotate=90,scale=.56,transform shape] {testpic};
\end{tikzpicture}
\end{center}
\end{fig}
\end{minipage}

\vfill\eject

\tikzset{
  testpic4/.pic=

{\draw (-1,0) -- (70,0);
\draw (2,0) -- (66,48);
\draw (39,0) -- (67,21);
\draw (1,0) -- (29,21);
\draw (0,0) -- (12,9);
\draw (38,0) -- (50,9);
\draw (18,0) -- (30,9);
\draw (56,0) -- (68,9);
\draw [dotted] (12,9) -- (68,9);
\draw [dotted] (29,21) -- (67,21);
\draw [color=red] (51,9) -- (51,36.75);

\node at (66,48) {\lbb};
\node at (67,48.75) {\lbb};
\node at (68,49.5) {\lbb};
\node at (69,50.25) {\lbb};
\node at (67,21) {\lbb};
\node at (68,21.75) {\lbb};
\node at (69,22.5) {\lbb};
\node at (68,9) {\lbb};
\node at (69,9.75) {\lbb};
\draw (39,0) -- (39,.75);
\draw (41,1.5) -- (41,2.25);
\draw (43,3) -- (43,3.75);
\draw (45,4.5) -- (45,5.25);
\draw (47,6) -- (47,6.75);
\draw (49,7.5) -- (49,8.25);
\draw (50,8.25) -- (50,9);
\draw [dotted] (66,48) -- (69,50.25);
\draw [dotted] (67,21) -- (69,22.5);
\draw [dotted] (68,9) -- (69,9.75);
\draw [color=red] (56,0) -- (56,40.5);
\draw [color=red] (58,1.5) -- (58,42);
\draw [color=red] (60,3) -- (60,43.5);
\draw [color=red] (62,4.5) -- (62,45);
\draw [color=red] (64,6) -- (64,46.5);
\draw [color=red] (67,8.25) -- (67,21);
\draw [color=red] [dashed] (67,20.25) -- (67,48.75);
\draw [color=red] [dashed] (68,9) -- (68,49.5);
\draw [color=red] [dashed] (69,0) -- (69,50.25);
\draw [color=red] (66,7.5) -- (66,48);
\draw [color=red] (54,11.25) -- (54,39);
\draw [color=red] (52,9.75) -- (52,37.5);
\draw [color=red] (39,0) to[out=80, in=270] (39,27.75);
\draw [color=red] (41,1.5) to[out=80, in=270] (41,29.25);
\draw [color=red] (43,3) to[out=80, in=270] (43,30.75);
\draw [color=red] (45,4.5) to[out=80, in=270] (45,32.25);
\draw [color=red] (47,6) to[out=80, in=270] (47,33.75);
\draw [color=red] (50,8.25) to[out=85, in=270] (50,36);
\draw (2,0) -- (2,1.5);
\draw (1,0) -- (1,.75);
\draw (4,1.5) -- (4,3);
\draw (6,3) -- (6,4.5);
\draw (8,4.5) -- (8,6);
\draw (10,6) -- (10,7.5);
\draw (12,7.5) -- (12,9);
\draw (14,9) -- (14,9.75);
\draw (16,10.5) -- (16,11.25);
\draw (18,12) -- (18,12.75);
\draw (20,13.5) -- (20,14.25);
\draw (22,15) -- (22,15.75);
\draw (24,16.5) -- (24,17.25);
\draw (26,18) -- (26,18.75);
\draw (29,20.25) -- (29,21);
\draw [color=red] (18,0) to[out=98, in=262] (18,12.75);
\draw [color=red] (20,1.5) to[out=98, in=262] (20,14.25);
\draw [color=red] (22,3) to[out=98, in=262] (22,15.75);
\draw [color=red] (24,4.5) to[out=98, in=262] (24,17.25);
\draw [color=red] (26,6) to[out=98, in=262] (26,18.75);
\draw [color=red] (29,8.25) to[out=98, in=262] (29,21);
\draw [dotted] (18,12.75) -- (56,12.75);
\node [font=\fontsize{40}{0}] at (34,9) {$p-2$};
\node [font=\fontsize {40}{0}] at (34,12.75) {$p^2-p$};
\node [font=\fontsize {40}{0}] at (34,21) {$p^2-3$};
\node [font=\fontsize {40}{0}] at (64,50.25) {$p^3-1$};
\node [font=\fontsize {40}{0}] at (46,36.75) {$p^3-p^2+p-2$};
\draw [dotted] (49,36.75) -- (51,36.75);
\draw [dotted] (66,50.25) -- (69,50.25);
\node [font=\Huge] at (18,-1) {$y_1^{p-1}z_1z_2^{p-1}$};
\node [font=\Huge] at (56,-1) {$y_1^{p-1}y_2^{p-1}z_1$};
\node [font=\Huge] at (36,-1) {$y_2^{p-1}z_1^p$};
\node [font=\Huge] at (41,-1) {$y_2^{p-1}z_2$};
\node [font=\Huge] at (69,-1) {$y_0^{p-1}y_1^{p-1}y_2^{p-1}z_0$};
\node [font=\Huge] at (1,-1) {$z_2^p$};
\node [font=\Huge] at (2.5,-1) {$z_3$};
\node [font=\Huge] at (-1.5,-1) {$z_1^pz_2^{p-1}$};
\node at (69,0) {\lbb};
}}
\bigskip
\begin{minipage}{6in}
\begin{fig}\label{oddchart}

{\bf Schematic of $A_3$ and $B_3$ for odd $p$.}
\begin{center}
\begin{tikzpicture}
  \pic[rotate=90,scale=.28,transform shape] {testpic4};
\end{tikzpicture}
\end{center}
\end{fig}
\end{minipage}

\vfill\eject

Now we describe the portion of $ku^*(K_2)$ in odd gradings. Let $P[S]$ denote the polynomial algebra on a set $S$, and  $TP_i[S]=P[S]/(s^i:\ s\in S)$, the truncated  polynomial algebra. Let $\L_j=TP_p[z_i:\ i\ge j]$. Note that if $p=2$, $\L_j$ is an exterior algebra. For $i\le j$, let \begin{equation}\label{zij}z_{i,j}=z_i(z_i\cdots z_{j-1})^{p-1}.\end{equation} If $j=i$, then $z_{i,j}=z_i$.
\begin{defin}\label{Sdef} For $\ell>k\ge1$, let
$S_{k,\ell}=TP_{k+1}[v]\langle z_{k_0,\ell},\ldots,z_{\ell-k-1+k_0,\ell}\rangle$ with $pz_{i,\ell}=vz_{i-1,\ell}$ and $pz_{k_0,\ell}=0$.
\end{defin}

\ni For example, $S_{5,8}$ with $p=2$ is depicted in Figure \ref{S710}.

\bigskip
\begin{minipage}{6in}
\begin{fig}\label{S710}

{\bf $S_{5,8}$ if $p=2$}

\begin{center}

\begin{\tz}[scale=.3]
\draw (0,0) -- (10,10);
\draw (2,0) -- (12,10);
\draw (4,0) -- (14,10);
\draw (2,0) -- (2,2);
\draw (4,0) -- (4,4);
\draw (6,2) -- (6,6);
\draw (8,4) -- (8,8);
\draw (10,6) -- (10,10);
\draw (12,8) -- (12,10);
\draw (-1,0) -- (5,0);

\node at (-.6,-.9) {$1040$};
\node at (3.9,-.9) {$1036$};
\node at (-.6,-1.9) {$z_{2,8}$};
\node at (3.9, -1.9) {$z_{4,8}$};

\node at (0,0) {\lb};
\node at (2,2) {\lb};
\node at (4,4) {\lb};
\node at (6,6) {\lb};
\node at (8,8) {\lb};
\node at (10,10) {\lb};
\node at (2,0) {\lb};
\node at (4,2) {\lb};
\node at (6,4) {\lb};
\node at (8,6) {\lb};
\node at (10,8) {\lb};
\node at (12,10) {\lb};
\node at (4,0) {\lb};
\node at (6,2) {\lb};
\node at (8,4) {\lb};
\node at (10,6) {\lb};
\node at (12,8) {\lb};
\node at (14,10) {\lb};

\end{\tz}
\end{center}
\end{fig}
\end{minipage}

The following result describes the portion of $ku^*(K_2)$ in odd gradings. The exponent of $p$ in an integer $i$ is denoted simply by $\nu(i)$; the prime $p$ is implicit. The element $q$ here has grading $9$ or $4p-1$, as mentioned earlier.
\begin{thm} \label{oddthm} There is an isomorphism of $ku^*$-modules
$$ku^{\text{odd}}(K_2)\approx \bigoplus_{i\ge1}\bigoplus_{\ell\ge\nu(i)+2}q y_1^{i-1}S_{\nu(i)+1,\ell}\ot TP_{p-1}[z_\ell]\ot\L_{\ell+1}.$$\end{thm}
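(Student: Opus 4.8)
The plan is to run the mod-$p$ Adams spectral sequence converging to $ku^*(K_2)$. Using $H^*(ku;\Z_p)=A/\!/E(1)$ with $E(1)=E(Q_0,Q_1)$ the exterior subalgebra on the first two Milnor primitives, change of rings gives
$$E_2=\Ext_{E(1)}\bigl(H^*(K_2;\Z_p),\Z_p\bigr)\Longrightarrow ku^*(K_2)^{\wedge}_p .$$
The first step is to read off the odd-total-degree part of $E_2$ from the $E(1)$-module decomposition of $H^*(K_2;\Z_p)$ established in Section~\ref{E2sec}: free $E(1)$-summands contribute only to the trivial submodule we ignore, while each non-free indecomposable feeding the odd degrees is a finite ``lightning-flash'' whose $\Ext$, as a module over $\Ext_{E(1)}(\Z_p,\Z_p)=P[h_0,v_1]$ ($h_0\in\Ext^{1,1}$, $v_1\in\Ext^{1,2p-1}$), is exactly a staircase $\overline S_{k,\ell}$ of $v_1$-height $k+1$. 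Tracking the algebra generators of $H^*(K_2;\Z_p)$ and the multiplicative shifts by $q$, $y_1$, $z_\ell$ and the monomials of $\L_{\ell+1}$ through this decomposition identifies the odd part of $E_2$ with
$$\bigoplus_{i\ge1}\ \bigoplus_{\ell\ge\nu(i)+2}\ q\,y_1^{i-1}\,\overline S_{\nu(i)+1,\ell}\ot TP_{p-1}[z_\ell]\ot\L_{\ell+1},$$
where $TP_{p-1}[z_\ell]$ and $\L_{\ell+1}$ act as index sets tensored over $\Z_p$.

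The second step is to show this part of the spectral sequence collapses and to rule out differentials out of the complementary (even) part landing in it. Differentials are $h_0$- and $v_1$-linear, and each $\overline S_{k,\ell}$ is already $v_1$-torsion of the predicted height, so a nonzero differential would have to shorten a finite staircase or cross the parity barrier by one degree. When $p$ is large the sparseness forced by $|v|=-2(p-1)$ eliminates all such candidates; to argue uniformly I would invoke the separately computed mod-$p$ groups $ku^*(K_2;\Z_p)$ together with the Bockstein long exact sequence
$$\cdots\to ku^*(K_2)\xrightarrow{\ p\ }ku^*(K_2)\to ku^*(K_2;\Z_p)\to ku^{*+1}(K_2)\to\cdots$$
which bounds the size of $E_\infty$ in each degree; matching this bound against the known Poincar\'e series of $ku^*(K_2;\Z_p)$, after subtracting the even-graded contribution of Theorem~\ref{evthm}, leaves no room for a differential.

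The third step is to resolve the extension problems. Those $h_0$-extensions of $\overline S_{k,\ell}$ already visible in $\Ext$ give the corresponding instances of $p z_{i,\ell}=v z_{i-1,\ell}$ at once, and $p z_{k_0,\ell}=0$ holds because the bottom class of the staircase is $h_0$-torsion already on $E_2$; the remaining, filtration-jumping, instances of $p z_{i,\ell}=v z_{i-1,\ell}$, and the assertion that there are \emph{no} exotic extensions between distinct staircases, between summands with different $q y_1^{i-1}$, or linking the odd and even parts, are all forced by the same Bockstein sequence, now read as computing $ku^{\text{odd}}(K_2)/p$ and the $p$-torsion of $ku^{*+1}(K_2)$ from $ku^*(K_2;\Z_p)$. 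Since no $v$-extension can raise Adams filtration in the relevant range, the $v$-action is read directly off $E_\infty$. Combining the three steps gives the claimed isomorphism of $ku^*$-modules.

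The main obstacle is precisely the interplay flagged in the introduction: on $E_2$ the odd part looks like a disjoint family of short staircases, but neither the potential higher differentials truncating them nor the exotic extensions splicing neighbouring staircases (or gluing the odd part onto the even part) can be ruled out internally at small primes. The indispensable external input is the independently computed $ku^*(K_2;\Z_p)$, and the technical heart of the proof is the careful degree-by-degree bookkeeping that feeds its Poincar\'e series through the Bockstein sequence and compares it with the sum of the even-graded answer of Theorem~\ref{evthm} and the asserted odd-graded answer, thereby forcing collapse and excluding exotic extensions simultaneously.
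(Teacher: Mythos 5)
Your first step contains the decisive error. The odd-degree part of the $E_2$-term is \emph{not} a sum of finite staircases of the asserted heights: by Theorem \ref{E2} it is $P[v,y_1]\ot E[q]\ot\bigl(\bigoplus_j W_j\ot TP_{p-1}[z_j]\ot\L_{j+1}\bigr)$ together with the $v^{k_0}q$-multiples of $P[h_0,v,y_1]$, and the Ext of a finite lightning-flash such as $M_j$ is $P[v]\ot W_{j-2}$ --- a staircase with \emph{infinite} $v$-towers, not one of $v$-height $k+1$ (compare Figure \ref{fig2} with Figure \ref{S710}). One can see a priori that your description of $E_2$ cannot be right: the truncation height $\nu(i)+2$ of $qy_1^{i-1}S_{\nu(i)+1,\ell}$ depends on the exponent of $y_1$, whereas $E_2$ is a free $P[v,y_1]$-module on $qW_\ell\ot\cdots$, so all $y_1$-multiples of a fixed tower look identical there. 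Consequently your second step --- that this part of the spectral sequence collapses and that parity-crossing differentials can be excluded --- is exactly backwards. Every Adams differential changes the (co)degree by one and hence crosses the parity barrier, and Theorem \ref{diffl} exhibits differentials of every length $\nu(i)+2$ and $p^t-t$ in \emph{both} directions: (\ref{2}) goes from the even part into the odd part and truncates the infinite $v$-tower on $qy_1^{i-1}z_{j-\nu(i)-1,j}M$ at height $\nu(i)+2$, which is where the $TP_{\nu(i)+2}[v]$ in $S_{\nu(i)+1,\ell}$ comes from, while (\ref{4}) goes from the odd part into the even part and removes the generators $qy_1^{i-1}z_{j,\ell}$ with $\ell-\nu(i)\le j\le\ell$, which is why $S_{\nu(i)+1,\ell}$ has only $\ell-\nu(i)-1$ generators rather than all of $W_\ell$; the entire $P[h_0,v,y_1]v^{k_0}q$ summand is likewise wiped out by (\ref{1}) and (\ref{3}). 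Establishing these differentials is the technical heart of the paper (the permanent-cycle input from Tamanoi, Browder's computation of $H^{2pi+1}(K_2;\Z)$, the constraints C1--C3, Table \ref{tabl}, and Lemma \ref{hell}); without them the odd part of $E_\infty$ would consist of infinite $v$-towers and could not be the finite groups claimed.

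Your instinct to bring in the mod-$p$ theory through the Bockstein sequence is the right external input --- your $ku^*(K_2;\Z_p)$ is exactly the $k(1)^*(K_2)$ of \cite{DRW} used in (\ref{LES}) --- but the paper uses it to \emph{force} the differentials (a $v$-tower whose image in $k(1)^*(K_2)$ vanishes must be hit by a differential or become divisible by $p$), not to rule them out, and it needs the full $ku^*$-module structure of $k(1)^*(K_2)$, in particular the $v$-heights $r(j)$ and $r'(j)$ of named towers, not merely a Poincar\'e series; a dimension count alone cannot distinguish the correct pattern of differentials from a rearrangement of it, which is why Section \ref{difflsec} works so hard with the ordering argument and the quantity $\Delta$. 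The no-exotic-extension part of your third step does reflect the strategy of Sections \ref{LESsec}--\ref{allsec}, but it only becomes available after the differentials are known.
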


The non-visual, formulaic form of our result is as follows.
\begin{thm}\label{formula} The $ku^*$-module $ku^*(K_2)$ is isomorphic to a trivial $ku^*$-module plus a module whose associated graded is
\begin{eqnarray}&&P[y_1]y_0^{p-1}z_0\oplus\bigoplus_{t\ge1}TP_{p^t}[v]\ot P[y_t]z_t\label{a1}\\
&\oplus&\label{a2}\bigoplus_{t\ge k_0}TP_{p^t-t}[v]\ot P[y_t]z_t\L_t\\
&\oplus&\label{a3}\bigoplus_{i\ge1}\bigoplus_{\ell\ge0}TP_{\nu(i)+2}[v]q y_1^{i-1}z_{k_0+\ell,\ell+\nu(i)+2}\overline\L_{\ell+\nu(i)+2}.\end{eqnarray}
Multiplication by $p$ in (\ref{a1}) and (\ref{a2}) is determined by (\ref{extns}) and in (\ref{a3}) as in Definition \ref{Sdef}.\end{thm}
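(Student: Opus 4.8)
Theorem~\ref{formula} is a bookkeeping consolidation of Theorems~\ref{evthm} and~\ref{oddthm}, so the proof strategy is to show that the associated graded of the modules appearing in those two theorems matches the list (\ref{a1})--(\ref{a3}), term by term. The first move is to note that taking an associated graded kills all the exotic extensions and all the $h_0$-extensions except those recorded in (\ref{extns}) and in Definition~\ref{Sdef}, so what we must check is purely a matching of $\Z_{(p)}[v]$-module generators and $v$-truncation heights, indexed correctly. I would organize this as three lemmas, one for each summand.

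\textbf{The even part.} Starting from Theorem~\ref{evthm}, I would unwind the inductive Definition~\ref{ABdef}. The key observation is that ``$B_k$ is built from $z_{k-1}^{p-1}B_{k-1}$, $TP_{p^k-k}[v]z_k$, and $y_{k-1}^{p-1}B_{k-1}$'' together with the generating set (\ref{Bk}) means that, as a $\Z_{(p)}[v]$-module, the associated graded of $A_k$ (resp.\ $B_k$) is a direct sum of truncated towers: one tower $TP_{p^k}[v]z_k$ (resp.\ $TP_{p^k-k}[v]z_k$) for each appearance of $z_k$, towers coming recursively from $A_{k-1}$ and $B_{k-1}$ shifted by $z_{k-1}^{p-1}$ or $y_{k-1}^{p-1}$, plus the one extra generator $z_0y_0^{p-1}\cdots y_{k-1}^{p-1}$ (or $z_1y_1\cdots y_{k-1}$ when $p=2$) whose tower is the polynomial tower $P[y_1]y_0^{p-1}z_0$ in the limit. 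Multiplying through by the monomials $M\in\M_k^A\cup\M_k^B$ from (\ref{Mk}) and summing over $k$, I would check that: the single $z_k$-tower (the $M=1$ copy of $A_k$, and the full orbit of $z_k$ under multiplication by $M_p[z_i,y_i:i>k]$ when one also sweeps in $B$-copies) reassembles exactly into $\bigoplus_t TP_{p^t}[v]\otimes P[y_t]z_t$ of (\ref{a1}) together with $\bigoplus_{t\ge k_0}TP_{p^t-t}[v]\otimes P[y_t]z_t\L_t$ of (\ref{a2}) — here the exterior/truncated-polynomial factor $\L_t$ is produced by the $\{z_i^{p-1},y_i^{p-1}\}$-choices in (\ref{Bk}) specialized to the $B$-generators, and the $P[y_t]$ by sweeping the $y_t$-direction across all $k$. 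The extra generator contributes precisely the summand $P[y_1]y_0^{p-1}z_0$. The $h_0$-extensions (\ref{extns}) are exactly the ones retained when passing to associated graded, matching the claim of the theorem.

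\textbf{The odd part.} From Theorem~\ref{oddthm}, $ku^{\text{odd}}(K_2)\approx\bigoplus_{i\ge1}\bigoplus_{\ell\ge\nu(i)+2}qy_1^{i-1}S_{\nu(i)+1,\ell}\otimes TP_{p-1}[z_\ell]\otimes\L_{\ell+1}$. By Definition~\ref{Sdef}, $S_{k,\ell}=TP_{k+1}[v]\langle z_{k_0,\ell},\dots,z_{\ell-k-1+k_0,\ell}\rangle$; the generator $z_{i,\ell}=z_i(z_i\cdots z_{\ell-1})^{p-1}$ of (\ref{zij}), when tensored with $TP_{p-1}[z_\ell]\otimes\L_{\ell+1}$, has associated graded contribution $z_{i,\ell}\overline\L_{\ell+\nu(i)+2}$ after reindexing $\ell\mapsto\ell+\nu(i)+2$ — matching $z_{k_0+\ell,\ell+\nu(i)+2}\overline\L_{\ell+\nu(i)+2}$ in (\ref{a3}) once one identifies the range $k_0\le i$, and the $v$-truncation height $k+1=\nu(i)+2$ gives $TP_{\nu(i)+2}[v]$. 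Summing over $i$ and the shifted $\ell$ gives (\ref{a3}) exactly, with the $p$-multiplication inherited verbatim from Definition~\ref{Sdef}.

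\textbf{Main obstacle.} The routine part is the tower-by-tower matching of heights; the genuinely delicate point is the reindexing that turns the \emph{recursive} bookkeeping of Definition~\ref{ABdef} and the monomial sets $\M_k^A,\M_k^B$ into the \emph{closed-form} sums over $t$ and over $(i,\ell)$. Specifically, one must verify that each $\Z_{(p)}[v]$-generator of $ku^{\text{ev}}(K_2)$ is counted once and only once: a monomial $z_j\prod_{i=j}^{k-1}\{z_i^{p-1},y_i^{p-1}\}$ times an $\M_k$-monomial could a priori be reached through several values of $k$, and one must check the definitions are set up (via the exclusion $-\{z_k^{p-1},y_k^{p-1}\}$ in (\ref{Mk}) and the ``leading block'' structure of (\ref{Bk})) so that there is a unique such $k$ — essentially, $k$ is the smallest index appearing with a ``pure'' $z_k$ or $y_k$ (not a $(p-1)$st power coming from a $z_{i,\ell}$-type product). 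Establishing this unique-factorization statement, and the parallel one on the odd side relating $\nu(i)$ to the truncation height and to the starting index of the $z_{i,\ell}$ product, is where the real work lies; once it is in hand, the identification of (\ref{a1})--(\ref{a3}) with Theorems~\ref{evthm} and~\ref{oddthm} is immediate and the $p$-multiplication statements are quotations of (\ref{extns}) and Definition~\ref{Sdef}. \qed
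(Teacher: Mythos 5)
The paper never isolates a proof of Theorem \ref{formula}: the formulas (\ref{a1})--(\ref{a3}) are in fact the \emph{direct} transcription of the $E_\infty$-term obtained from Theorem \ref{E2} and the differentials of Theorem \ref{diffl} ((\ref{a1}) records the targets of (\ref{3}) together with the $P[y_1]y_0^{p-1}z_0$ summand of \ref{E2}, (\ref{a2}) the targets of (\ref{4}), (\ref{a3}) the targets of (\ref{2})), and it is Theorems \ref{evthm} and \ref{oddthm} that are derived from this by the regrouping carried out at the end of Section \ref{difflsec}. You run that regrouping in reverse, taking \ref{evthm} and \ref{oddthm} as input. That is a legitimate route, and the two combinatorial facts you would need are exactly the ones the paper uses: the unique-factorization statement (``every monomial $z_\ell M$ is in a unique $\M_kB_k$,'' determined by the smallest index whose $y,z$-block is not a $(p-1)$st power), which you correctly single out as the crux, and the identity $\bigoplus_{\ell\ge j}Z_j^\ell\, TP_{p-1}[z_\ell]\ot\L_{\ell+1}=\L_j$, which the paper records in Section \ref{allsec} and which is what collapses the double sum of \ref{oddthm} into the single sum of (\ref{a3}).

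The gap is in the odd-graded paragraph: the ``reindexing $\ell\mapsto\ell+\nu(i)+2$'' is asserted rather than performed, and if you perform it you do not land on the stated formula. Writing $z_{k_0+m,\ell}=z_{k_0+m,\,m+\nu(i)+2}\,Z_{m+\nu(i)+2}^{\ell}$ and summing over $\ell\ge m+\nu(i)+2$, the tail factors $Z_{m+\nu(i)+2}^{\ell}\ot TP_{p-1}[z_\ell]\ot\L_{\ell+1}$ assemble to all of $\L_{m+\nu(i)+2}$, \emph{including} the trivial monomial (coming from $\ell=m+\nu(i)+2$, $M=1$), whereas (\ref{a3}) has the augmentation ideal $\overline\L_{\ell+\nu(i)+2}$. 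So the literal statement omits one $TP_{\nu(i)+2}[v]$-tower, on $qy_1^{i-1}z_{k_0+\ell,\ell+\nu(i)+2}$ itself, for each $(i,\ell)$ --- a tower that is genuinely present, since it is truncated by the $M=1$ case of differential (\ref{2}). Dually, as written (\ref{a1}) and (\ref{a2}) overlap in $P[y_t]z_t$ (since $1\in\L_t$), assigning it two different $v$-heights; the intended reading must put the bar on the $\L_t$ of (\ref{a2}) rather than on the $\L$ of (\ref{a3}). A proof that actually checks the term-by-term match --- which is what your plan promises --- must surface and resolve this discrepancy (most plausibly as a transposition of the bars in the statement); as written, your argument declares the match ``exact'' precisely where it fails.
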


Our initial interest in this project was $ku_*(K_2)$ (\cite{W},\cite{DW}), but  we first achieved success in computing $ku^*(K_2)$. In \cite[Example 3.4]{DD}, the following result was proved.
\begin{thm}\label{dual} There is an isomorphism of  $ku_*$-modules $ku_*(K_2)\approx(ku^{*+2p}K_2)^\vee$.\end{thm}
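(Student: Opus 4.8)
\textbf{Proof proposal for Theorem \ref{dual}.} The plan is to derive the stated isomorphism from a general universal-coefficient / duality spectral sequence for connective $K$-theory applied to the specific space $K_2=K(\zp,2)$, and then to observe that for this space the spectral sequence collapses in a way that shifts degrees by $2p$. First I would recall the Anderson-duality (or Brown-Comenetz--type) statement for $ku$: since $ku$ is not self-dual, the relevant relationship between $ku_*(X)$ and $ku^*(X)$ for a finite-type $X$ is governed by a short exact sequence of the form
\begin{equation}\label{UCT}0\to\operatorname{Ext}^1_{ku_*}(ku_{*-1}(X),ku_*)\to ku^*(X)\to\operatorname{Hom}_{ku_*}(ku_*(X),ku_*)\to 0,\end{equation}
together with its dual (the one expressing $ku_*(X)$ in terms of $ku^*(X)$). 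Because $K_2$ has homology of finite type and, away from the ignored trivial summand, all the $ku^*$-modules appearing in Theorems \ref{evthm}--\ref{formula} are finitely generated and built from $v$-towers $TP_i[v]$ and free pieces $P[y_t]$ over $\Z_{(p)}[v]$, the $\operatorname{Ext}$ and $\operatorname{Hom}$ terms can be computed module-by-module: $\operatorname{Hom}_{ku_*}(P[y_t]z_t\cdots,ku_*)$ contributes the dual $v$-divisible towers, while $\operatorname{Ext}^1_{ku_*}(TP_i[v],ku_*)\cong TP_i[v]$ up to a degree shift. The key numerical input is that dualizing $TP_i[v]$ (with $|v|=-2(p-1)$) introduces a shift, and matching it against the $z_j$, $y_i$, $q$ gradings produces exactly the global shift by $2p=|z_j|-2p^{j}\cdot(\text{correction})$; more precisely one checks $|z_0|=2(p+1)$ so that the lowest class is shifted by $2p$ relative to its dual partner.

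The cleanest route, and the one actually indicated by the citation, is to invoke \cite[Example 3.4]{DD} directly: that paper establishes a duality functor on a category of $ku^*$-modules of the type arising here and proves that for $K(\zp,2)$ the homology is the $(*+2p)$-shifted $ku^*$-module dual of the cohomology. So the proof is essentially a pointer plus a verification that the hypotheses of the \cite{DD} result are met, namely: (i) $ku^*(K_2)$ is, up to a trivial summand, a countable direct sum of shifted cyclic $ku^*$-modules each of which is either free over $\Z_{(p)}$ in each degree (the $P[y_t]$ pieces) or a finitely generated torsion-in-$v$ module ($TP_i[v]$ pieces) — this is exactly the content of Theorem \ref{formula}; and (ii) $K_2$ has finite type mod $p$ so the relevant $\lim^1$ terms vanish and \eqref{UCT} and its dual are short exact. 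I would then spell out the duality dictionary: a summand $TP_i[v]\ot P[y_t]z_t$ in degree $d$ is sent by $(-)^\vee$ to $\operatorname{Hom}$ of the free part (giving a $v$-divisible tower, i.e.\ a "$ku_*$-injective" summand $\Q/\Z$-type object capped by $v^{i}$) placed in degree $-d$, and after the universal $+2p$ reindexing these assemble into the known additive structure of $ku_*(K_2)$ from \cite{W},\cite{DW}.

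The main obstacle I expect is bookkeeping the degree shift uniformly across the three families \eqref{a1}, \eqref{a2}, \eqref{a3} simultaneously, since each has its generators $z_t$, $y_0^{p-1}z_0$, $qy_1^{i-1}z_{\cdots}$ in different gradings and the truncation exponents $p^t$, $p^t-t$, $\nu(i)+2$ differ; one must verify that the \emph{same} shift $2p$ works for all of them, which is really a statement that the duality pairing $ku^*(K_2)\ot ku_*(K_2)\to ku_*=\Z_{(p)}[v]$ is perfect after the shift. This amounts to checking that the total "formal dimension" contributed by the fundamental class of $K_2$ (which sits in $H^2$) behaves like a $2p$-shift under connective $K$-theory duality — the discrepancy between the cohomological degree $2$ and the shift $2p$ being accounted for by the difference $|v|=-2(p-1)$ between periodic and connective theories. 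A secondary, more routine obstacle is handling the ignored trivial $ku^*$-submodule: one notes its Poincaré series (Section \ref{E2sec}) is such that its $\operatorname{Hom}$- and $\operatorname{Ext}$-duals are again trivial modules, so it contributes only to the trivial summand on the homology side and does not affect the stated isomorphism. Since \cite{DD} has already carried out the category-level argument, the present proof reduces to these verifications, and I would present it as: "Apply \cite[Example 3.4]{DD}; the hypotheses hold by Theorem \ref{formula} and the finite-type property of $K_2$." $\qed$
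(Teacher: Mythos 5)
The paper offers no internal proof of this theorem: it is imported verbatim from \cite[Example 3.4]{DD}, and since your proposal ultimately reduces to invoking that same citation (``Apply \cite[Example 3.4]{DD}''), it takes essentially the same --- and the only available --- approach. One caution: the universal-coefficient short exact sequence you write down as motivation is not actually exact as stated, since $ku_*=\Z_{(p)}[v]$ has global dimension $2$ (so the UCT is a spectral sequence with an $\operatorname{Ext}^2$ column), and the duality in the theorem is Pontryagin duality $\Hom(-,\Z/p^\infty)$ rather than $\Hom(-,ku_*)$ Anderson-type duality; neither point affects the operative step, which is the citation.
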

Here $M^\vee=\Hom(M,\Z/p^\infty)$, the Pontryagin dual, localized at $p$. A homotopy chart for $ku_*(K_2)$ could be thought of as a shifted version of
 the homotopy chart of $ku^*(K_2)$ viewed upside-down and backwards. For example, the element of $ku^{108}(K_2)^\vee$ dual to the element $v^4y_3z_3z_4$ in Figure \ref{B7} corresponds to the generator of a $\Z_4$ in $ku_{104}(K_2)$ on which $v^4$ acts nontrivially. This element can be seen in Figure \ref{H_*pic}.

  A remarkable property, for which one explanation is given in Section \ref{optsec}, is that  $B_k$ is self-dual as a $ku^*$-module. One way of stating this is to let $\widetilde B_k$ denote $B_k$ with its indices negated. Then there is an isomorphism of $ku_*$-modules
 \begin{equation}\Sigma^{2(p^{k+1}+p^{k}+(k+1)p-k+1)}\widetilde B_k\approx B_k^\vee.\label{Bd}\end{equation}
 For example, with $p=2$, the second smallest generator $Y$ of $\Sigma^{208}\widetilde B_5$ is in grading $208-134=74$ and has $2Y\ne0$ and $v^4Y\ne0$. (See Figure \ref{B7}.) The second generator $Z$ of $B_5^\vee$ is dual to the class in position $(74,4)$ in Figure \ref{B7}, and also satisfies $2Z\ne0$ and $v^4Z\ne0$.
 The isomorphism (\ref{Bd}) can be proved by induction on $k$ using Definition \ref{ABdef}.

 A complete description of the $ku_*$-module $ku_*(K_2)$ is immediate from Theorems \ref{evthm}, \ref{oddthm}, and \ref{dual}.  However, one might like a complete description of its ASS. We can write formulas for the $E_2$-term and differentials, but will not do so here. In Theorem \ref{ku*thm} we give a complete description of the $E_\infty$-term of the ASS of $ku_*(K_2)$ with exotic extensions included, in terms of the charts  described in Section \ref{intro}.

  In \cite{DD}, a comparison was made of a chart for $A_3$ and its $ku_*$ analogue. Here we
 present in Figure \ref{H_*pic} the $ku_*$ analogue of Figure \ref{B7}. This presents the portion of the ASS of $ku_*(K_2)$ dual to $A_5$ with $p=2$ under the isomorphism of Theorem \ref{dual}. The ASS chart dual to $B_5$ is obtained from this by removing the classes connected by dashed lines, and lowering the remaining tower so that the bottom is in filtration 0. The resulting chart is isomorphic to the $B_5$ part of Figure \ref{B7}.

 \vfill\eject

 \tikzset{
  testpic3/.pic=
{\draw (0,5) -- (1,6) -- (1,5) -- (5,9) -- (5,7) -- (29,31);
\draw (-3,0) -- (33,0);
\draw (3,7) -- (3,5);
\draw (4,6) -- (4,8);
\draw [dashed] (3,5) -- (-2,0) -- (-2,5);
\draw [dashed] (-2,1) -- (2,5);
\draw [dashed] (-2,1) -- (-1,2) -- (-1,1);
\draw [dashed] (0,2) -- (0,5) -- (-2,3);
\draw [dashed] (-2,2) -- (1,5) -- (1,3);
\draw [dashed] (-2,4) -- (-1,5) -- (-1,2);
\draw [dashed] (2,4) -- (2,5);
\draw [color=red] (5,0) to[out=98, in=262] (5,9);
\draw [color=red] (10,0) to[out=98, in=262] (10,13);
\draw [color=red] (11,1) to[out=98, in=262] (11,14);
\draw [color=red] (12,2) to[out=98, in=262] (12,15);
\draw [color=red] (13,3) to[out=98, in=262] (13,16);
\draw [color=red] (22,0) to[out=98, in=262] (22,4);
\draw [color=red] (22,3) to[out=83, in=270] (22,24);
\draw [color=red] (21,2) to[out=83, in=270] (21,23);
\draw [color=red] (20,1) to[out=83, in=270] (20,22);
\draw [color=red] (19,0) to[out=83, in=270] (19,21);
\draw [color=red] (27,0) to[out=83, in=270] (27,8);
\draw [color=red] (14,0) -- (14,4);
\draw [color=red] (23,4) -- (23,25);
\draw [color=red] (24,5) -- (24,26);
\draw [color=red] (25,6) -- (25,27);
\draw [color=red] (26,7) -- (26,28);
\draw [color=red] (27,8) -- (27,29);
\draw [color=red] (28,1) -- (28,30);
\draw [color=red] (29,2) -- (29,31);
\draw [color=red] (30,3) -- (30,11);
\draw [color=red] (31,0) -- (31,4);
\node at (0,5) {\lb};
\node at (1,5) {\lb};
\node at (1,6) {\lb};
\node at (2,6) {\lb};
\node at (3,7) {\lb};
\node at (4,8) {\lb};
\node at (5,9) {\lb};
\node at (2,5) {\lb};
\node at (3,6) {\lb};
\node at (4,7) {\lb};
\node at (5,8) {\lb};
\node at (6,9) {\lb};
\node at (7,10) {\lb};
\node at (8,11) {\lb};
\node at (9,12) {\lb};
\node at (10,13) {\lb};
\node at (11,14) {\lb};
\node at (12,15) {\lb};
\node at (13,16) {\lb};
\node at (3,5) {\lb};
\node at (4,6) {\lb};
\node at (5,7) {\lb};
\node at (6,8) {\lb};
\node at (7,9) {\lb};
\node at (8,10) {\lb};
\node at (9,11) {\lb};
\node at (10,12) {\lb};
\node at (11,13) {\lb};
\node at (12,14) {\lb};
\node at (13,15) {\lb};
\node at (14,16) {\lb};
\node at (15,17) {\lb};
\node at (16,18) {\lb};
\node at (17,19) {\lb};
\node at (18,20) {\lb};
\node at (19,21) {\lb};
\node at (20,22) {\lb};
\node at (21,23) {\lb};
\node at (22,24) {\lb};
\node at (23,25) {\lb};
\node at (24,26) {\lb};
\node at (25,27) {\lb};
\node at (26,28) {\lb};
\node at (27,29) {\lb};
\node at (28,30) {\lb};
\node at (29,31) {\lb};
\node at (5,0) {\lb};
\node at (6,1) {\lb};
\node at (9,0) {\lb};
\node at (10,1) {\lb};
\node at (10,0) {\lb};
\node at (11,1) {\lb};
\node at (12,2) {\lb};
\node at (13,3) {\lb};
\node at (14,4) {\lb};
\node at (14,0) {\lb};
\node at (15,1) {\lb};
\node at (17,0) {\lb};
\node at (18,1) {\lb};
\node at (18,0) {\lb};
\node at (19,1) {\lb};
\node at (20,2) {\lb};
\node at (21,3) {\lb};
\node at (22,4) {\lb};
\node at (19,0) {\lb};
\node at (20,1) {\lb};
\node at (21,2) {\lb};
\node at (22,3) {\lb};
\node at (23,4) {\lb};
\node at (24,5) {\lb};
\node at (25,6) {\lb};
\node at (26,7) {\lb};
\node at (27,8) {\lb};
\node at (28,9) {\lb};
\node at (29,10) {\lb};
\node at (30,11) {\lb};
\node at (22,0) {\lb};
\node at (23,1) {\lb};
\node at (26,0) {\lb};
\node at (27,1) {\lb};
\node at (27,0) {\lb};
\node at (28,1) {\lb};
\node at (29,2) {\lb};
\node at (30,3) {\lb};
\node at (31,4) {\lb};
\node at (31,0) {\lb};
\node at (32,1) {\lb};

\draw (17,0) -- (18,1) -- (18,0) -- (22,4);
\draw (19,0) -- (30,11);
\draw (19,0) -- (19,1) -- (20,2) -- (20,1) -- (21,2) -- (21,3) -- (22,4) -- (22,3);
\node at (-2,0) {\lb};
\node at (-2,1) {\lb};
\node at (-2,2) {\lb};

\node at (-2,3) {\lb};
\node at (-2,4) {\lb};
\node at (-2,5) {\lb};
\node at (-1,1) {\lb};
\node at (-1,2) {\lb};
\node at (-1,3) {\lb};
\node at (-1,4) {\lb};
\node at (-1,5) {\lb};
\node at (0,2) {\lb};
\node at (0,3) {\lb};
\node at (0,4) {\lb};
\node at (1,3) {\lb};
\node at (1,4) {\lb};
\node at (2,4) {\lb};

\draw (2,6) -- (2,5) -- (13,16) -- (13,15);
\draw (6,8) -- (6,9) -- (7,10) -- (7,9) -- (8,10) -- (8,11) -- (9,12) -- (9,11) -- (10,12) -- (10,13) -- (11,14) -- (11,13) -- (12,14) -- (12,15);
\draw (3,5) -- (5,7);
\draw (9,0) -- (10,1) -- (10,0) -- (14,4);
\draw (5,0) -- (6,1);
\draw (14,0) -- (15,1);
\draw (22,0) -- (23,1);
\draw (26,0) -- (27,1) -- (27,0) -- (31,4);
\draw (31,0) -- (32,1);
\node at (-2,-.4) {$64$};
\node at (5,-.4) {$78$};
\node at (10,-.4) {$88$};
\node at (14,-.4) {$96$};
\node at (17,-.4) {$102$};
\node at (22,-.4) {$112$};
\node at (26,-.4) {$120$};
\node at (32,-.4) {$132$};
}}

\bigskip
\begin{minipage}{6in}
\begin{fig}\label{H_*pic}

{\bf Portion of $ku_*(K_2)$ corresponding to $B_5$ and $A_5$.}
\begin{center}
\begin{tikzpicture}
  \pic[rotate=90,scale=.48,transform shape] {testpic3};
\end{tikzpicture}
\end{center}
\end{fig}
\end{minipage}

\bigskip
\vfill\eject
We observe that in even gradings of the ASS for $ku_*(K_2)$, $h_0$-extensions exactly correspond to exotic extensions in the ASS of $ku^{*+2p}(K_2)$, and vice versa.
 As a typical example of the duality, the summands of $ku^{82}(K_2)$, $ku^{82}(K_2)^\vee$, and $ku_{78}(K_2)$ in Figures \ref{B7} and \ref{H_*pic} are all isomorphic to $\Z_8\oplus\zt$. But for the $ku_*$-module structure, it is $ku^{82}(K_2)^\vee$ and $ku_{78}(K_2)$ that correspond, since in both, the element that is divisible by 4,  in position $(82,0)$ and $(78,7)$, resp., is also divisible by $v^7$ for $A_5$ and by $v^4$ for $B_5$.


\begin{thm}\label{ku*thm}  The $E_\infty$-term of the ASS of $ku_*(K_2)$ with exotic extensions included contains exactly the following.
 \begin{itemize}
 \item There is a trivial $ku_*$-module, which when $p=2$ has generators corresponding to those enumerated at the end of Section \ref{E2sec} with gradings decreased by 4, and similarly when $p$ is odd.
 \item For every $S_{k,\ell}$ occurring in a summand of Theorem \ref{oddthm}, there is a chart of the same form as Figure \ref{S710} with $v$-towers of height $k+1$ on generators  in gradings $2p^{\ell+1}+2(p-1)(i-k_0-1)$ for $1\le i\le \ell-k$. One must add to this the grading of the other factors accompanying $S_{k,\ell}$ in Theorem \ref{oddthm}.
\item For each occurrence of $B_k$ in Theorem \ref{evthm}, there is a summand  $$\Sigma^{2(p^{k+1}+p^{k}+kp-k+1)}\widetilde B_k$$ with gradings increased by those of other factors accompanying $B_k$ in \ref{evthm}. Here $\widetilde B_k$ is as defined prior to (\ref{Bd}).
\item For each summand $y_k^eA_k$ in Theorem \ref{evthm}, there is a variant of    $\Sigma^{2(p^{k+1}+p^{k}+kp-k+1)}\widetilde B_k$ with gradings increased by $2ep^k$. In this variant, the initial $v$-towers are pushed up by $k$ filtrations and surrounded with a triangle of classes of the sort appearing in the lower left corner of Figure \ref{H_*pic}. See Remark \ref{rk}.
\end{itemize}
\end{thm}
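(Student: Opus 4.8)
**Proof proposal for Theorem \ref{ku*thm}.**

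The plan is to deduce the statement by combining the already-established structural results with the duality of Theorem \ref{dual}, translating each summand of $ku^*(K_2)$ into its Pontryagin dual and reading off the resulting $E_\infty$-chart. First I would recall that Theorem \ref{dual} gives $ku_*(K_2)\approx(ku^{*+2p}K_2)^\vee$, so it suffices to dualize the complete description of $ku^*(K_2)$ given by Theorems \ref{evthm} and \ref{oddthm} (equivalently the formula of Theorem \ref{formula}) summand by summand, since $(-)^\vee=\Hom(-,\Z/p^\infty)$ is an exact contravariant functor on finitely generated $ku^*$-modules localized at $p$ and hence commutes with direct sums and takes a filtration to the dual filtration. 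The trivial $ku^*$-summand dualizes to a trivial $ku_*$-summand with gradings shifted by $-2p$ (which for $p=2$ is the $-4$ shift asserted), giving the first bullet.

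For the odd-graded part, I would take each summand $qy_1^{i-1}S_{\nu(i)+1,\ell}\otimes TP_{p-1}[z_\ell]\otimes\L_{\ell+1}$ from Theorem \ref{oddthm}. The tensor factors $TP_{p-1}[z_\ell]\otimes\L_{\ell+1}$ are finite free $\zp$-modules, so dualizing only reflects gradings and reindexes; the essential content is $(S_{k,\ell})^\vee$ with $k=\nu(i)+1$. From Definition \ref{Sdef}, $S_{k,\ell}=TP_{k+1}[v]\langle z_{k_0,\ell},\ldots\rangle$ with the $h_0$-extension pattern $pz_{i,\ell}=vz_{i-1,\ell}$; its Pontryagin dual is a $ku_*$-module built from $v$-towers of height $k+1$, and a direct computation of the dual gradings (the generator dual to the top of the $i$-th $v$-tower lands in grading $2p^{\ell+1}+2(p-1)(i-k_0-1)$ after the $+2p$ shift of Theorem \ref{dual}) yields exactly the chart of the form of Figure \ref{S710} described in the second bullet. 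I would present this as a short lemma identifying $(S_{k,\ell})^\vee$ up to the stated suspension, proved by the evident explicit basis computation.

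For the even-graded part I would invoke Theorem \ref{evthm}: $ku^{\text{ev}}(K_2)$ is a direct sum of shifted copies of the $A_k$ and $B_k$. For the $B_k$ summands, the key input is the self-duality isomorphism \eqref{Bd}, $\Sigma^{2(p^{k+1}+p^k+(k+1)p-k+1)}\widetilde B_k\approx B_k^\vee$; combining this with the $+2p$ shift of Theorem \ref{dual} accounts precisely for the suspension $\Sigma^{2(p^{k+1}+p^k+kp-k+1)}\widetilde B_k$ in the third bullet, and the accompanying monomial factors from $\M_k^B$ contribute their own grading shifts additively, as claimed. For the $y_k^eA_k$ summands, I would dualize $A_k$ directly using the inductive Definition \ref{ABdef}: since $A_k$ is built from $z_{k-1}^{p-1}B_{k-1}$, $TP_{p^k}[v]z_k$, and $y_{k-1}^{p-1}A_{k-1}$ whereas $B_k$ uses $TP_{p^k-k}[v]z_k$ and $y_{k-1}^{p-1}B_{k-1}$, the difference $A_k\ominus B_k$ is controlled, and dualizing converts the longer $v$-tower $TP_{p^k}[v]z_k$ (relative to $TP_{p^k-k}[v]z_k$) into the ``pushed up by $k$ filtrations, surrounded by a triangle'' phenomenon visible in the lower left of Figure \ref{H_*pic}; this is the content of Remark \ref{rk}, to which I would defer the pictorial details. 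The $2ep^k$ shift is just the grading of $y_k^e$ dualized through Theorem \ref{dual}.

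The main obstacle I anticipate is not any single hard computation but rather the bookkeeping needed to show that dualizing \emph{commutes} with the direct-sum decomposition of Theorem \ref{evthm} at the level of $E_\infty$-pages \emph{with exotic extensions included} — i.e.\ that the exotic (non-$h_0$) extensions in the ASS of $ku_*(K_2)$ are exactly the Pontryagin duals of the $h_0$-extensions in $ku^{*+2p}(K_2)$ and vice versa, as observed in the paragraph preceding the theorem. Establishing this interchange rigorously requires knowing that the ASS filtration on $ku^*(K_2)$ is dual (up to the known shift) to that on $ku_*(K_2)$; I would handle it by using the fact that the duality of Theorem \ref{dual} comes from \cite{DD} as a duality of spectra (or of the relevant tower), so it respects Adams filtration up to a fixed shift, and then the self-duality \eqref{Bd} of each $B_k$ — which the excerpt notes is provable by induction on $k$ from Definition \ref{ABdef} — upgrades to a statement about charts. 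Everything else reduces to assembling these pieces and matching gradings.
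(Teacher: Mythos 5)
Your overall strategy---dualize the known description of $ku^*(K_2)$ summand by summand through Theorem \ref{dual}---is the natural starting point, and your grading bookkeeping is correct: the $-2p$ shift does convert the suspension in (\ref{Bd}) into the one asserted in the theorem, the trivial module does shift by $-4$ when $p=2$, and the $2ep^k$ shift for $y_k^e$ is right. But the step you yourself flag as the main obstacle is exactly where the argument fails, and your proposed resolution of it is not available. Pontryagin duality of $ku^*$-modules does \emph{not} respect Adams filtration up to a fixed shift, and Theorem \ref{dual} cannot be upgraded to an isomorphism of $E_\infty$-charts: the paper's discussion immediately preceding the theorem makes this explicit, observing that $h_0$-extensions in the ASS of $ku_*(K_2)$ correspond to \emph{exotic} extensions in the ASS of $ku^{*+2p}(K_2)$ and vice versa, and in the worked example the class in position $(82,0)$ (filtration $0$) of the cohomology chart corresponds to a class in position $(78,7)$ (filtration $7$) of the homology chart. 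The filtration of a dual class depends on the local tower it sits in, not on a global shift, so the $E_\infty$-page with extensions is strictly more data than the $ku_*$-module structure that duality provides.

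What is actually needed, and what the paper's proof supplies, is an independent computation of the $E_2$-term of the ASS for $ku_*(K_2)$ (that is, $\ext_{E_1}(H^*K_2,\zp)$, extending the work of \cite{DW} and \cite{W} by the methods of Section \ref{E2sec}). Only with that in hand can the $p$-orders and $v$-heights of generators obtained from duality be used to \emph{force} the differentials and exotic extensions, the point being that in the relevant gradings the $E_2$-term consists entirely of $v$-towers, so the differentials are determined by where towers must be truncated. The isomorphism (\ref{Bd}) then enters not merely for grading bookkeeping but because its left-hand side $\Sigma^{\cdots}\widetilde B_k$ is itself an explicit chart, which is what identifies the ASS form of $B_k^\vee$; similarly the ``pushed up by $k$, surrounded by a triangle'' description for the $A_k$ duals comes from the homology $E_2$-term (via \cite[Fig.~4.2]{DW} and Remark \ref{rk}), not from formally dualizing the cohomology chart. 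Your proposal is missing this input: without the homology $E_2$-term there is no way to assign Adams filtrations to the dual classes.
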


\begin{proof} Theorem \ref{dual} and our results for $ku^*(K_2)$ give    the $ku_*$-module structure of $ku_*(K_2)$, but that is not the same as the ASS picture. Expanding on work done in \cite{DW} and \cite{W} and using methods such as those in Section \ref{E2sec}, we were able to write the $E_2$-term of the ASS for $ku_*(K_2)$, and had conjectured the differentials (but not the extensions) prior to embarking on our $ku$-cohomology project. We were unable to {\em prove} the differentials, probably because we had not taken sufficient advantage of the exact sequence with $k(1)_*(K_2)$. Now that we know the 2-orders and $v$-heights of generators (by grading, at least, if not by name), it is straightforward to see that the differentials  must be as we expected. The isomorphism (\ref{Bd}) plays an important role here; the left hand side gives the ASS form of the right hand side.
\end{proof}

\begin{rmk}\label{rk}{\rm Regarding the unusual portion of the ASS chart for part of $ku_*(K_2)$ in the lower left of Figure \ref{H_*pic}, this is obtained from \cite[Fig.~4.2]{DW} with $d_6$-differentials on all odd-graded towers. For $A_k$, it will be a triangle going up to filtration $k$, with all but the first two dots on the top row being part of $B_k$.}\end{rmk}

The structure of the rest of the paper is as follows. In Section \ref{E2sec}, we compute the $E_2$-term of the ASS for $ku^*(K_2)$. In Section \ref{difflsec}
we determine the differentials in this ASS. In order to do so, we need to compare with $k(1)^*(K_2)$, where $k(1)$ is the spectrum for mod-$p$ connective $KU$-theory, using the exact sequence
\begin{equation}\label{LES}\to k(1)^{*-1}(K_2)\to ku^*(K_2) \mapright{p} ku^*(K_2)\to k(1)^*(K_2)\to ku^{*+1}(K_2)\mapright{p}.\end{equation}
In Section \ref{difflsec}, we restate results about $k(1)^*(K_2)$ from \cite{DRW}. At the end of Section 3, we  show how the descriptions of $ku^*(K_2)$ in Theorems \ref{evthm} and \ref{oddthm} are obtained once we know the differentials. This exact sequence is also used in
determining the exotic extensions of (\ref{extns}), which is done in Section \ref{extnsec}.  In Section \ref{LESsec}, we propose complete formulas for the exact sequence (\ref{LES}), and then in Section \ref{allsec}, we show that our proposed formulas account for all elements of $k(1)^*(K_2)$ exactly once.

The main point of Section \ref{allsec} is to prove that there are no additional exotic extensions in $ku^*(K_2)$. An exotic extension $p\cdot A=B$ implies that $A$ is  not in the image from $k(1)^{*-1}(K_2)$, and $B$ does not map nontrivially to $k(1)^*(K_2)$, so once we have shown that all elements are accounted for, there can be no more extensions.
Many of our formulas in Section \ref{LESsec} are forced by naturality. However, many others occur in regular families, but with surprising filtration jumps. We could probably prove that the homomorphisms {\em must} be as we claim, by showing that there are no other possibilities, but we prefer to forgo doing that. In the optional Section \ref{optsec}, we discuss in more detail how the charts are obtained and provide an explanation for the duality result (\ref{Bd}).

\section{The $E_2$-term of the ASS for $ku^*(K_2)$} \label{E2sec}

We will need some notation.  By $H^*K_2$, we understand $H^*(K(\zp,2);\zp)$.
Let $E$ denote an exterior algebra, $P$ a polynomial algebra, and
$TP_n[x]=P[x]/(x^n)$ the truncated polynomial algebra.  In all
cases these will be over $\zp$, the integers mod $p$. Let $\Ebar$ denote the augmentation ideal
of an exterior algebra, and
$E_1 = E[Q_0,Q_1]$, where $Q_i$ are the Milnor primitives.
Because $Q_i^2 = 0$ we have homology groups, $H_*(-;Q_i)$, defined for $E_1$-modules.
We let $\langle y_1, y_2, \ldots \rangle$ denote the $\zp$-span of classes
$y_i$.

The Adams spectral sequence (ASS) for $ku^*(K_2)$ has $E_2^{s,t} =
\Ext_{\mathcal{A}}^{s,t}(H^*(bu),H^*K_2)$, where $\mathcal{A}$ is the mod $p$
Steenrod algebra and $H^*(bu) \iso \mathcal{A}/\mathcal{A}(Q_0,Q_1)$.
Using a standard change of rings theorem, \cite{Liu64}, this is
$\Ext_{E_1}^{s,t}(\zp,H^*K_2)$.  This converges to $ku^{-(t-s)}(K_2)$.
We depict this with $E_2^{s,t}$ in position $(t-s,s)$ as usual, but
label the axis with codegrees, the negative of the homotopical degree, so the
left side of the chart will have positive gradings and refer to cohomological grading.
In an attempt to avoid confusion, we rewrite this as $G_2^{-(t-s),s}$.  With
this notation, the differentials are $d_r : G_r^{a,b} \lra G_r^{a+1,b+r}$,
multiplication by the element $v \in ku^{-2(p-1)}$ (also considered in $G_r^{-2(p-1),1}$),
is $v : G_r^{a,b} \lra G_r^{a-2(p-1),b+1}$, and multiplication by the element representing
$p \in ku^0$,  ($h_0 \in G_r^{0,1}$), is $h_0 : G_r^{a,b} \lra G_r^{a,b+1}$.

In the paragraph preceding Remark \ref{2.18}, we will define elements $z_j \in G_2^{2(p^{j+1}+1),0}$ for $j \ge 0$ and elements
$$z_{i,j} \in G_2^{2(p^{j+1}+1+(p-1)(j-i)),0}$$ as in (\ref{zij}) satisfying the properties in Definition \ref{Sdef}.
\begin{defin}\label{Wdef}
For $j \ge k_0$, we define $W_j =
\langle z_{j,j},z_{j-1,j},\ldots, z_{k_0,j}\rangle$.\end{defin}
We also have
$y_i \in G_2^{2p^i,0}$ for $i\ge0$, and \begin{equation}\label{qdef}q\in G^{9,0}_2\text{ if }p=2,\text{ and in }G^{4p-1,0}_2\text{ if $p$ is odd.}\end{equation}
One last definition, let
$\L_{j+1}=TP_p[z_i:\,i\ge j+1]$.

A picture of $P[v]\otimes W_5$ as a $P[v,h_0]$-module with $p=2$ appears in Figure \ref{fig2}.
\bigskip

\begin{minipage}{5in}

\begin{fig}\label{fig2}

{\bf A depiction of $P[v]\otimes W_5$}

\begin{center}
\begin{\tz}[scale=.55]
\draw (-1,0) -- (10,0);
\draw [->] (0,0) -- (11,5.5);
\draw [->]  (2,0) -- (11,4.5);
\draw [->]  (4,0) -- (11,3.5);
\draw [->] (6,0) -- (11,2.5);
\draw (2,0) -- (2,1);
\draw (4,0) -- (4,2);
\draw (6,0) -- (6,3);
\draw (8,1) -- (8,4);
\draw (10,2) -- (10,5);
\node at (0,-.5) {$136$};
\node at (2,-.5) {$134$};
\node at (4,-.5) {$132$};
\node at (6,-.5) {$130$};
\node at (0,-1.3) {$z_{2,5}$};
\node at (2,-1.3) {$z_{3,5}$};
\node at (4,-1.3) {$z_{4,5}$};
\node at (6,-1.3) {$z_{5,5}$};

\node at (0,0) {$\ssize\bullet$};
\node at (2,1) {$\ssize\bullet$};
\node at (4,2) {$\ssize\bullet$};
\node at (6,3) {$\ssize\bullet$};
\node at (8,4) {$\ssize\bullet$};
\node at (10,5) {$\ssize\bullet$};
\node at (2,0) {$\ssize\bullet$};
\node at (4,1) {$\ssize\bullet$};
\node at (6,2) {$\ssize\bullet$};
\node at (8,3) {$\ssize\bullet$};
\node at (10,4) {$\ssize\bullet$};
\node at (4,0) {$\ssize\bullet$};
\node at (6,1) {$\ssize\bullet$};
\node at (8,2) {$\ssize\bullet$};
\node at (10,3) {$\ssize\bullet$};
\node at (6,0) {$\ssize\bullet$};
\node at (8,1) {$\ssize\bullet$};
\node at (10,2) {$\ssize\bullet$};
\end{\tz}
\end{center}
\end{fig}
\end{minipage}

\bigskip
The remainder of this section is devoted to the proof of the following result.
\begin{thm}\label{E2}
The $E_2$ term of the Adams spectral sequence for the reduced $ku^*(K_2)$ is
isomorphic as a $P[h_0,v]$-module to
$$
P[v,y_1]\ot E[q]
\ot\bigl( \bigoplus_{j\ge k_0}(W_j\ot TP_{p-1}[z_j]\ot\L_{j+1})\bigr)
$$
$$
\oplus
\bigl(P[h_0,v,y_1]\ot
E[ v^{k_0}q]\bigr)
\oplus
\biggl(
P[y_1]\ot\begin{cases}\langle y_0^{p-1}z_0\rangle&p\text{ odd}\\
\langle y_0z_0,z_1,h_0y_0z_0=vz_1\rangle&p=2.\end{cases}
\biggr)
$$
\noindent
plus a trivial $P[h_0,v]$-module.
\end{thm}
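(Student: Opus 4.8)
The plan is to compute $\Ext_{E_1}^{*,*}(\zp, H^*K_2)$ by building $H^*K_2$ up from understood pieces as an $E_1 = E[Q_0,Q_1]$-module. First I would recall the structure of $H^*K_2 = H^*(K(\zp,2);\zp)$ as a module over the Steenrod algebra, hence over $E_1$; by the work of Cartan--Serre it is a polynomial/exterior algebra on admissible Steenrod operations applied to the fundamental class $\iota_2$. The key reduction is the change-of-rings isomorphism (Liu, \cite{Liu64}) identifying $E_2^{s,t} = \Ext_{\mathcal A}^{s,t}(H^*bu, H^*K_2)$ with $\Ext_{E_1}^{s,t}(\zp, H^*K_2)$, which I would state at the outset. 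Then the strategy is the standard one for $E_1$-modules: decompose $H^*K_2$ (up to the relevant filtration/extension data) as a direct sum of the three types of cyclic $E_1$-modules — free modules $E_1$, the modules $E[Q_0]$ and $E[Q_1]$ (which are "lightning flashes" in $\Ext$), and the trivial module $\zp$ — and read off $\Ext$ of each: a free summand contributes nothing to reduced $\Ext$ in positive internal degree, $E[Q_0]$ contributes a single $h_0$-tower, $E[Q_1]$ contributes a single $v$-tower, and a $\zp$ contributes a full $P[h_0,v]$ on one generator.

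The technical heart is the decomposition of $H^*K_2$ as an $E_1$-module, and this is where I expect the main obstacle to lie. One computes the Margolis homologies $H_*(H^*K_2; Q_0)$ and $H_*(H^*K_2; Q_1)$ to detect the non-free part; the free part is then split off. The non-free part should be the tensor product (over $\zp$) of explicit truncated-polynomial and exterior factors indexed by the generators $y_0, y_1, \dots$, $z_j$ (built from Steenrod operations on $\iota_2$, with $z_j$ of degree $2(p^{j+1}+1)$ reflecting a $Q_0$- or $Q_1$-action pattern), and $q$. Here one has to be careful to identify which monomials support $Q_0$- or $Q_1$-lightning-flash behavior versus which are killed; the inductive definitions of $W_j$ (Definition \ref{Wdef}), $\L_{j+1}$, and the appearance of $TP_{p-1}[z_j]$ all come from tracking the $p$-th power truncations in $H^*K_2$ and the interplay of the two Milnor primitives. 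The bookkeeping is intricate because $Q_0$ and $Q_1$ do not act independently on a product of generators, so one must choose the right basis in which $H^*K_2$ becomes a direct sum of cyclic $E_1$-modules; this is essentially a careful Künneth-type argument iterated over the generating set.

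Granting that decomposition, I would then assemble the answer term by term. The free $E_1$-summands produce the "trivial $P[h_0,v]$-module" at the end of the statement (and account for the ignored submodule mentioned in Section~\ref{intro}). The factor $P[v,y_1]\ot E[q]\ot\bigoplus_{j\ge k_0}(W_j\ot TP_{p-1}[z_j]\ot\L_{j+1})$ comes from the $E[Q_1]$-lightning-flash part — each such flash gives a $v$-tower, and $P[v]$ on it, with the extra tensor factors recording the multiplicity of flashes; the appearance of $P[y_1]$ rather than $P[h_0,y_1]$ reflects that $y_1$-multiplication is $Q_0$-trivial there. The summand $P[h_0,v,y_1]\ot E[v^{k_0}q]$ is the part of $\Ext$ where both an $h_0$-tower and a $v$-tower emanate from the same class (the "$z_0$-type" piece in which $Q_0$ and $Q_1$ both act nontrivially, giving a full bigraded polynomial algebra), with the degree shift $v^{k_0}q$ forced by the dimension of $q$ and the $k_0$-convention. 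Finally the low-degree exceptional summand involving $y_0^{p-1}z_0$ (resp.\ $y_0z_0, z_1$ with the relation $h_0y_0z_0 = vz_1$ when $p=2$) is a small piece that must be handled by direct inspection, as it arises from the bottom of the tower where the generic pattern degenerates; the relation $h_0 y_0 z_0 = v z_1$ is the $E_2$-level shadow of the extension $p z_1 = v y_0 z_0$ (the $k=1$ case of (\ref{extns})) and is already visible in $\Ext$ because $Q_0$ and $Q_1$ hit a common class. Throughout, I would organize the proof so that the $P[h_0,v]$-module structure is tracked simultaneously with the additive structure, since the module structure is exactly what distinguishes the three types of summands; the main work, to repeat, is getting the $E_1$-module decomposition of $H^*K_2$ precisely right, and verifying it is a genuine direct-sum decomposition and not merely an associated-graded one up to the stated trivial-module ambiguity.
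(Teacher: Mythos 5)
Your overall framework—change of rings to $\Ext_{E_1}(\zp,H^*K_2)$, Margolis homology to detect the non-free part, splitting off the free part by Wall's theorem—is the paper's framework. The genuine gap is your central structural claim: that the non-free part decomposes into the cyclic $E_1$-modules $E_1$, $E[Q_0]$, $E[Q_1]$ and $\zp$. That list of indecomposables is too small, and the stated $P[h_0,v]$-module structure cannot be produced from it. The summand $W_j$ consists of several $v$-towers linked by the relations $h_0z_{k,j}=vz_{k-1,j}$ (Definition \ref{Wdef} and Figure \ref{fig2}); a direct sum of two-dimensional cyclic modules $\langle x,Q_0x\rangle$ would give a direct sum of disjoint $P[v]$'s with no $h_0$-action between them. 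What is actually required are the arbitrarily long indecomposable zigzag modules $L_k$ (generators $g_0,\dots,g_{2k}$ with $Q_1g_{2i}=Q_0g_{2i+2}$), realized inside $H^*K_2$ as the modules $M_j\approx\Sigma^{2^j+1}L_{j-4}$; it is precisely the undoing of the filtration of $L_k$ by $Q_0$-pairs that produces the $h_0$-relations defining $W_j$. Likewise the summand $P[h_0,v,y_1]\ot E[v^{k_0}q]$ does not come from a cyclic module on which both $Q_0$ and $Q_1$ act nontrivially (that is free $E_1$, which contributes nothing to reduced $\Ext$), nor from a trivial module: it comes from the five-dimensional indecomposable $N$ on $x_5,x_7,x_9$ with $Q_0x_7=Q_1x_5$ and $Q_0x_9=Q_1x_7$, whose $\Ext$ is a single shifted $P[h_0,v]$ on a class identified as both $v^{k_0}q$ and $h_0^{k_0}x_5$; that identification is the content of this summand and of the relation $h_0y_0z_0=vz_1$.

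Two further points. First, you have the roles of the Milnor primitives reversed: a cyclic module with free $Q_0$-action and trivial $Q_1$-action is $E_1\ot_{E[Q_1]}\zp$, so its $\Ext_{E_1}$ is $\Ext_{E[Q_1]}(\zp,\zp)=P[v]$, a $v$-tower, while the $Q_1$-free, $Q_0$-trivial pieces give $h_0$-towers. Hence the first summand of the theorem arises from $Q_0$-free submodules carrying the residual $Q_1$-homology (\ref{rest}), not from ``$E[Q_1]$-lightning flashes.'' Second, even with the correct list of indecomposables, your K\"unneth remark does not address the one genuinely delicate splitting: the summand $S\approx\langle q\rangle\ot R$ must be split off of $N\ot R$ by an explicit map (this is where the $E[q]$ tensor factor in the first summand comes from), and exhibiting that splitting is a necessary step, not bookkeeping.
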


Some of the algebra structure of this $E_2$ will be useful later. For example, the product structure among the $z_j$'s will be clear, and also the formula
\begin{equation}\label{x9z4}(v^2q)^2 = v^4z_2,\end{equation}
holds when $p=2$ since, as we shall see, in $H^*(K_2)$, $x_9^2-Q_0x_{17}\in\im(Q_1)$.

We will give a detailed proof when $p=2$, and then sketch the minor changes for odd $p$.
There are two parts to proving this theorem.  First, we must give
a complete description of the $E_1$-module structure of $H^*K_2$.
Second, we have to compute $\Ext_{E_1}^{*,*}(\zt,-)$ of this.
We begin the first part.

Serre (\cite{Ser}) showed that $H^*K_2$ is a
polynomial algebra on classes $u_{2^j+1}$ in degree $2^j+1$ for $j\ge0$
defined by $u_2=\io_2$ and $u_{2^{j+1}+1}=\sq^{2^j}u_{2^j+1}$ for $j\ge0$. We
easily have
$$
Q_0(u_2)=u_3,\ Q_0(u_3)=0,\ Q_0(u_{2^j+1})=u_{2^{j-1}+1}^2\text{ for }j\ge2,
$$
and
$$
Q_1(u_2)=u_5,\ Q_1(u_3)=u_3^2,\ Q_1(u_5)=0,\ Q_1(u_{2^j+1})=u_{2^{j-2}+1}^4\text{ for }j\ge3.
$$
Let $x_5=u_5+u_2u_3$ and write $H^*K_2$ as an associated graded object:
$$
P[u_2^2]\ot E[x_5] \ot \bigl( E[u_2] \ot P[u_3] \bigr) \ot_{j\ge 2}
\left( E[u_{2^{j+1}+1}]\ot P[(u_{2^j+1})^2] \right)
$$
From this, we can read off
\begin{lem}
\label{Q0}
$$
H_*(H^*K_2;Q_0)=P[u_2^2]\ot E[x_5]
$$
\end{lem}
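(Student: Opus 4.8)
The plan is to compute $H_*(H^*K_2;Q_0)$ directly from the associated graded decomposition of $H^*K_2$ that was just written down, using the fact that $Q_0$ is a derivation and the K\"unneth-type behavior of $Q_0$-homology on tensor products. Since $Q_0$ is a derivation with $Q_0^2=0$, and since $H^*K_2$ has been displayed as a tensor product (as a $\zt$-module, via the associated graded) of the simple pieces $P[u_2^2]$, $E[x_5]$, $E[u_2]\ot P[u_3]$, and the pieces $E[u_{2^{j+1}+1}]\ot P[(u_{2^j+1})^2]$ for $j\ge2$, it suffices to compute $Q_0$-homology of each tensor factor and multiply the answers. So first I would record $Q_0$ on each generator: $Q_0$ kills $u_2^2$ and $x_5$ (the latter because $Q_0(x_5)=Q_0(u_5)+Q_0(u_2)u_3+u_2Q_0(u_3)=0+u_3\cdot u_3 + u_2\cdot 0$—wait, one must be slightly careful here, so let me flag that $Q_0(x_5)=u_3^2$, not $0$; this is exactly why $x_5$ is paired with $u_3$ below rather than surviving on its own).

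The key computation is therefore organized factor by factor. The factor $P[u_2^2]$ consists of $Q_0$-cycles on which $Q_0$ acts trivially (as $Q_0(u_2^2)=0$), so it contributes $P[u_2^2]$ to the homology. For the factor $E[u_2]\ot P[u_3]$: here $Q_0(u_2)=u_3$ and $Q_0(u_3)=0$, so this is the standard acyclic-except-at-bottom complex—$u_3$ becomes a boundary and $u_2$ is no longer a cycle—giving $Q_0$-homology equal to $\zt$ (generated by $1$). Absorbing $x_5$: since $Q_0(x_5)=u_3^2$, one groups $x_5$ together with the $P[u_3]$ factor and finds that $E[x_5]\ot P[u_3]$ with this differential has homology $P[u_3]/(u_3^2)\cdot\{1\}\oplus\ldots$—again a short exact-sequence count shows the surviving classes are $1$ and $x_5 u_3$; combining with the $E[u_2]$ part (where $u_3$ is already a boundary) the net surviving class from the whole block $E[u_2]\ot E[x_5]\ot P[u_3]$ is $1$ and $x_5$ (now $x_5\cdot 1$ survives because its coboundary $u_3^2$ is a $Q_0$-boundary, namely $Q_0(u_2 u_3)$, wait $Q_0(u_2 u_3)=u_3\cdot u_3 = u_3^2$, yes). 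This is the delicate bookkeeping step and the one I expect to be the main obstacle: getting the interaction of $x_5$, $u_2$, and $u_3$ exactly right so that $E[x_5]$ genuinely survives to homology. For each $j\ge2$, the factor $E[u_{2^{j+1}+1}]\ot P[(u_{2^j+1})^2]$ has $Q_0(u_{2^{j+1}+1})=u_{2^{j}+1}^2$ and $Q_0((u_{2^j+1})^2)=0$, so by the same two-step complex argument its $Q_0$-homology is $\zt$.

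Finally I would assemble: the $Q_0$-homology of the tensor product is the tensor product of the homologies, which is $P[u_2^2]\ot E[x_5]\ot \zt\ot\zt\ot\cdots = P[u_2^2]\ot E[x_5]$, as claimed. A subtlety worth a sentence is that $H^*K_2$ is only a polynomial algebra, and the displayed decomposition is as an \emph{associated graded}; but $Q_0$ is filtration-preserving with respect to the relevant filtration (or one argues on the associated graded and lifts, since $Q_0$-homology is insensitive to the filtration jumps here), so the computation on the associated graded gives the answer for $H^*K_2$ itself. The main obstacle, to repeat, is the careful handling of $x_5$ and the pair $(u_2,u_3)$: one must verify that $x_5$ is a $Q_0$-cycle modulo boundaries (it is not literally a cycle, since $Q_0(x_5)=u_3^2$, but $u_3^2=Q_0(u_2 u_3)$ is a boundary) so that it contributes a genuine exterior generator to $H_*(H^*K_2;Q_0)$; everything else is a routine repeated application of the observation that a two-term complex $E[a]\ot P[b]$ with $Q_0 a=b$, $Q_0 b=0$ has homology $\zt$.
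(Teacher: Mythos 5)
Your overall strategy --- compute the $Q_0$-homology of each tensor factor of the displayed associated-graded decomposition and assemble by K\"unneth --- is exactly the paper's (which compresses the whole thing to ``From this, we can read off''). But your execution of the one step you yourself flag as delicate contains a genuine error. You compute $Q_0(x_5)$ taking $Q_0(u_5)=0$; in fact the formulas recorded just before the lemma give $Q_0(u_{2^j+1})=u_{2^{j-1}+1}^2$ for $j\ge 2$, so $Q_0(u_5)=u_3^2$ and hence $Q_0(x_5)=Q_0(u_5)+Q_0(u_2u_3)=u_3^2+u_3^2=0$. This cancellation is the entire reason $x_5$ is defined to be $u_5+u_2u_3$ rather than $u_5$. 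Thus $x_5$ is literally a $Q_0$-cycle, $E[x_5]$ is a tensor factor with zero differential, and no regrouping of $x_5$ with $E[u_2]\ot P[u_3]$ is needed: the factors $E[u_2]\ot P[u_3]$ and $E[u_{2^{j+1}+1}]\ot P[(u_{2^j+1})^2]$ each have $Q_0$-homology $\zt$ by the two-term Koszul argument you give, and the lemma follows at once.

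Beyond the arithmetic slip, the repair you attempt is not valid reasoning: an element $x$ with $Q_0(x)=u_3^2\ne 0$ is not a cycle and does not represent a homology class, regardless of whether $u_3^2$ happens also to be the boundary of something else; what that fact tells you is that $x+u_2u_3$ \emph{is} a cycle, and it is that corrected representative which carries the homology class. (In your incorrect setup that representative would be $x_5+u_2u_3=u_5$; in the correct setup the cycle is $x_5$ itself.) Since either way exactly one exterior generator in degree $5$ survives, your final answer agrees with the lemma, but as written the proof rests on a wrong value of $Q_0(u_5)$ and on the faulty ``its coboundary is a boundary, so it survives'' inference. Your closing remark about the associated graded versus $H^*K_2$ itself is a fair point, handled the same (implicit) way in the paper.
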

Letting   $x_9=u_9+u_3^3$ and $x_{17}=u_{17}+u_2u_5^3$,
we rewrite again as

\begin{gather*}
P[u_2^2] \ot TP_4[x_9]\ot TP_4[x_{17}]\ot_{j>4} E[(u_{2^j+1})^2]\\
\ot \bigl( E[u_2]\ot P[u_5]\bigr)
\ot \bigl( E[u_3] \ot P[u_3^2]\bigr) \ot_{j>4}
\bigl(E[u_{2^j+1}] \ot P[(u_{2^{j-2}+1})^4]\bigr).
\end{gather*}

Again we read off
\begin{lem}\label{Q1}
$$
H_*(H^*K_2;Q_1) =
P[u_2^2] \otimes TP_4[x_9] \otimes TP_4[x_{17} ]
\otimes_{j>4} E[(u_{2^j+1})^2]
$$
\end{lem}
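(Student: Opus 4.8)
The plan is to read off $H_*(H^*K_2;Q_1)$ from the tensor-product presentation of $H^*K_2$ displayed just above the statement, in exactly the way Lemma~\ref{Q0} was read off from the earlier presentation. The mechanism is that $Q_1$ is a derivation on $H^*K_2$ with $Q_1^2=0$, so over the field $\zt$ the functor $H_*(-;Q_1)$ carries a tensor product of $Q_1$-subcomplexes to the tensor product of their $Q_1$-homologies (a K\"unneth argument). Thus the two things to do are: verify that each displayed tensor factor is closed under $Q_1$, and compute the $Q_1$-homology of each.

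For the closure and the computation I would use the Cartan formula together with the recorded values $Q_1u_2=u_5$, $Q_1u_3=u_3^2$, $Q_1u_5=0$, and $Q_1u_{2^j+1}=(u_{2^{j-2}+1})^4$ for $j\ge3$. The factors $E[u_2]\otimes P[u_5]$ and $E[u_{2^j+1}]\otimes P[(u_{2^{j-2}+1})^4]$ with $j>4$ are each of the form $E[a]\otimes P[b]$ with $Q_1a=b$, hence Koszul-acyclic in positive degrees; the factor $P[u_3]=E[u_3]\otimes P[u_3^2]$ has $Q_1(u_3^{2m})=0$ and $Q_1(u_3^{2m+1})=u_3^{2m+2}$, so its $Q_1$-homology is also $\zt$. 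The remaining factors carry the zero differential: $Q_1(u_2^2)=0$; for the corrected generators $Q_1x_9=Q_1(u_9+u_3^3)=u_3^4+3u_3^4=0$ and $Q_1x_{17}=Q_1(u_{17}+u_2u_5^3)=u_5^4+u_5\cdot u_5^3=0$; and $Q_1((u_{2^j+1})^2)=2u_{2^j+1}(u_{2^{j-2}+1})^4=0$ for $j>4$. Multiplying out the surviving factors gives precisely $P[u_2^2]\otimes TP_4[x_9]\otimes TP_4[x_{17}]\otimes_{j>4}E[(u_{2^j+1})^2]$.

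The main obstacle, and where I expect the real work to lie, is justifying the displayed splitting itself, which is asserted only as an \emph{associated graded} of $H^*K_2$. Two points must be settled. First, $x_9$ and $x_{17}$ are only the first terms of a sequence of corrected polynomial generators: to realize the factors $E[u_{2^j+1}]\otimes P[(u_{2^{j-2}+1})^4]$ for all $j>4$ literally as $Q_1$-subcomplexes, one must replace each $u_{2^j+1}$ ($j>4$) by $x_{2^j+1}=u_{2^j+1}+(\text{correction})$ chosen so that $Q_1x_{2^j+1}=x_{2^{j-2}+1}^4$ exactly (for instance $x_{33}=u_{33}+u_3^{11}$, using $Q_1u_3^{11}=u_3^{12}$ and $x_9^4=u_9^4+u_3^{12}$). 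With this adjusted polynomial basis one checks that $H^*K_2$ does decompose, factor by factor along the two $Q_1$-towers through the $u_{2^j+1}$ (those with $j$ odd, beginning $u_3,x_9,x_{33},\dots$, and those with $j$ even, beginning $u_2,u_5,x_{17},x_{65},\dots$), into the listed subcomplexes; alternatively one filters $H^*K_2$ by generator level, notes that $Q_1$ preserves the filtration with the displayed associated graded, and checks that the resulting spectral sequence collapses. Second, and conceptually the heart of the statement, one must see \emph{why} the $x_9$- and $x_{17}$-factors are truncated at the fourth power rather than polynomial: although $Q_1x_9=Q_1x_{17}=0$, the classes $x_9^4$ and $x_{17}^4$ become $Q_1$-boundaries as soon as the generators one level up in the tower are present, since $x_9^4=Q_1(u_{33}+u_3^{11})$ and $x_{17}^4=Q_1(u_{65}+u_2^5u_5^{11})$ — the same mechanism that, for $j>4$, makes $(u_{2^j+1})^4$ bound via $u_{2^{j+2}+1}$ while $(u_{2^j+1})^2$ survives, yielding $E[(u_{2^j+1})^2]$. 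Everything past these two points is routine characteristic-$2$ derivation bookkeeping.
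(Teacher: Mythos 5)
Your proof is correct and follows essentially the same route as the paper, which simply displays the (associated graded) tensor decomposition $P[u_2^2]\ot TP_4[x_9]\ot TP_4[x_{17}]\ot_{j>4}E[(u_{2^j+1})^2]\ot(E[u_2]\ot P[u_5])\ot(E[u_3]\ot P[u_3^2])\ot_{j>4}(E[u_{2^j+1}]\ot P[(u_{2^{j-2}+1})^4])$ and "reads off" the $Q_1$-homology by killing the acyclic Koszul factors. You supply explicitly what the paper leaves implicit — the derivation computations $Q_1x_9=Q_1x_{17}=0$, the higher corrected generators $x_{2^j+1}$ (or the equivalent filtration/collapse argument) needed to make the associated-graded splitting legitimate, and the reason $x_9^4,x_{17}^4$ bound — all of which check out.
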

An associated graded version of this is
\begin{lem}\label{Q1g}
$$
H_*(H^*K_2;Q_1) =
P[u_2^2] \otimes E[x_9] \otimes E[x_{17} ]
\otimes_{j>2} E[(u_{2^j+1})^2]
$$
\end{lem}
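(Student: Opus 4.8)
The plan is to obtain Lemma~\ref{Q1g} formally from Lemma~\ref{Q1}, with no further homology computation. The one ingredient needed is the elementary fact that over $\zt$ a four-dimensional truncated polynomial algebra splits as a tensor product of two exterior algebras, $TP_4[x]\iso E[x]\ot E[x^2]$, as graded $\zt$-vector spaces, since both sides have basis $\{1,x,x^2,x^3\}$ in the same internal degrees. First I would apply this with $x=x_9$, peeling off an exterior factor on a class $x_9^2$ of degree $18=2(2^3+1)$, and with $x=x_{17}$, peeling off an exterior factor on $x_{17}^2$ of degree $34=2(2^4+1)$. Substituting into Lemma~\ref{Q1}, the tail $TP_4[x_9]\ot TP_4[x_{17}]\ot_{j>4}E[(u_{2^j+1})^2]$ is rewritten as
$$E[x_9]\ot E[x_{17}]\ot E[x_9^2]\ot E[x_{17}^2]\ot_{j>4}E[(u_{2^j+1})^2].$$

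Next I would observe that $x_9^2$ and $(u_{2^3+1})^2=u_9^2$ occupy the same degree -- indeed $x_9^2=u_9^2$ already in $H_*(H^*K_2;Q_1)$, since they differ by $u_3^6=Q_1(u_3^5)$ -- and likewise $x_{17}^2=u_{17}^2$ there, since $x_{17}^2+u_{17}^2=u_2^2u_5^6=Q_1(u_2^3u_5^5)$. Hence the three rightmost tensor factors coalesce into $\ot_{j>2}E[(u_{2^j+1})^2]$, and restoring the untouched factor $P[u_2^2]$ yields the asserted expression; a degree-by-degree comparison of Poincar\'e series with Lemma~\ref{Q1} confirms that no summand has been gained or lost.

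There is essentially no obstacle in the argument; the only point worth flagging is the force of the phrase ``associated graded version.'' The isomorphism $TP_4[x]\iso E[x]\ot E[x^2]$ is not one of rings, so Lemma~\ref{Q1g} records only the graded $P[u_2^2]$-module underlying $H_*(H^*K_2;Q_1)$ -- in particular $Q_0$ does not act trivially on it (for instance $Q_0x_{17}=x_9^2$ in this homology, modulo $Q_1$-boundaries). The multiplicative structure and the $Q_0$-action -- equivalently, the information that two of the exterior factors really came from truncated polynomial algebras -- are not asserted here; they are re-incorporated at the stage where $\Ext_{E_1}$ is computed, for which the sharper Lemma~\ref{Q1} remains available.
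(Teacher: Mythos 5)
Your proposal is correct and is essentially the paper's own (unstated) argument: the paper introduces Lemma \ref{Q1g} simply as ``an associated graded version'' of Lemma \ref{Q1}, and your splitting $TP_4[x]\iso E[x]\ot E[x^2]$ together with the identifications $x_9^2\sim u_9^2$ and $x_{17}^2\sim u_{17}^2$ in $Q_1$-homology is exactly the content being elided. Your closing caveat about the isomorphism being only one of graded modules, not of rings or $E_1$-modules, is also consistent with how the paper uses the two lemmas.
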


\noindent The bulk of the work here is finding a nice splitting of $H^*K_2$ as an $E_1$-module.

Let $N$ be the $E_1$-submodule with single nonzero elements in
gradings 5, 7, 8, 9, and 10 with generators $x_5=u_5 + u_2 u_3$, $x_7=u_2u_5$,
and $x_9= u_9 + u_3^3$, satisfying $Q_0x_7=Q_1x_5$ and $Q_0x_9=Q_1x_7=x_{10}$.
It has a $Q_0$-homology class $x_5$ and a $Q_1$-homology class $x_9$. This class $x_9$ is called $q$ in Theorem \ref{E2} and in all other sections.
A picture of $N$ is in Figure \ref{N}. In pictures such as this, straight lines indicate $Q_0=\sq^1$ and curved lines $Q_1$.

\bigskip
\begin{minipage}{6in}
\begin{fig}\label{N}

{\bf An $E_1$-module $N$.}

\begin{center}

\begin{\tz}[scale=.35]

\draw (4,0) -- (6,0);
\draw (8,0) -- (10,0);
\draw (0,0) to[out=45, in=135] (6,0);
\draw (4,0) to[out=315, in=225] (10,0);
\node at (0,-.7) {$5$};
\node at (8,.7) {$9$};
\node at (4,-1) {$7$};

\node at (10,.7) {$10$};
\node at (0,0) {\lb};

\node at (4,0) {\lb};
\node at (6,0) {\lb};
\node at (8,0) {\lb};
\node at (10,0) {\lb};
\end{\tz}
\end{center}
\end{fig}
\end{minipage}

\bigskip

The $E_1$-submodule $P[u_2^2]\oplus P[u_2^2]\ot N$
carries the $Q_0$-homology of $H^*K_2$, while the remaining $Q_1$-homology is,
written in our usual way as an associated graded version,
\begin{equation}\label{rest}
P[u_2^2]\ot E[x_9]\ot\Ebar[x_{17},u_{2^j+1}^2,\ j > 2].
\end{equation}

We will exhibit a $Q_0$-free $E_1$-submodule $R$ whose $Q_1$-homology
is exactly the above $\Ebar$. Moreover, $N\ot R$ contains an $E_1$-split
summand $S$ which maps isomorphically to $\langle x_9\rangle\ot R$.

It is premature to state this because we haven't defined $R$ and $S$ yet,
but for the record:

\begin{prop}
\label{T}
As an $E_1$ module, $\Ht^*K_2$ is isomorphic to $T \oplus F$ where $F$ is
 free over $E_1$ and $T$ is
$$
P[u_2^2]\ot
\bigl(\langle u_2^2\rangle\oplus N\oplus R\oplus S\bigr)
$$
\end{prop}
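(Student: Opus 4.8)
The plan is to build the decomposition $\Ht^*K_2 \iso T \oplus F$ by carefully peeling off $E_1$-free summands, using the two Milnor-primitive homology computations (Lemmas \ref{Q0}, \ref{Q1}, \ref{Q1g}) as bookkeeping controls. The strategy rests on the standard fact that an $E_1$-module is free if and only if both its $Q_0$-homology and its $Q_1$-homology vanish, and more precisely that $Q_0$- and $Q_1$-homology are additive over direct sums, so every free summand we split off must not disturb the homology groups we have already identified. First I would record that $P[u_2^2]\langle u_2^2\rangle$ (the trivial-module part spanned by powers of $u_2^2$ other than... well, including $u_2^2$ itself as a free-over-$\zt$ generator) carries part of the $Q_0$-homology computed in Lemma \ref{Q0}, and that $P[u_2^2]\ot N$ carries the rest of it, since $N$ has exactly one $Q_0$-homology class $x_5$ and one $Q_1$-homology class $x_9$; thus $P[u_2^2]\oplus P[u_2^2]\ot N$ accounts for all of $H_*(H^*K_2;Q_0)=P[u_2^2]\ot E[x_5]$ together with the ``$x_9$-part'' of the $Q_1$-homology, i.e. the $P[u_2^2]\ot E[x_9]$ tensor-factor in Lemma \ref{Q1g}.

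Next I would construct $R$. The remaining $Q_1$-homology, by \eqref{rest}, is $P[u_2^2]\ot E[x_9]\ot\Ebar[x_{17},u_{2^j+1}^2:\ j>2]$, and we want a $Q_0$-free $E_1$-submodule $R$ realizing exactly the augmentation-ideal factor $\Ebar[x_{17},u_{2^j+1}^2:\ j>2]$ as its $Q_1$-homology (tensored with $P[u_2^2]\ot E[x_9]$, which will come along for free). Concretely, each generator $u_{2^j+1}^2$ (and $x_{17}$) together with its $Q_0$-image — $Q_0(u_{2^j+1}^2)$ and the relevant products — assembles into small $Q_0$-acyclic blocks; taking the sub-$E_1$-module generated by the classes $x_{17}$, $u_{2^j+1}^2$ for $j>2$ and closing up under $Q_0$ and multiplication by $P[u_2^2]$, one gets a $Q_0$-free module whose $Q_1$-homology is the desired $\Ebar$. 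Then I would define $S$ inside $N\ot R$: because $N\ot R$ maps to $\langle x_9\rangle\ot R$ via the $Q_1$-homology projection $N \to \langle x_9 \rangle$, one checks this map is split by an $E_1$-module section (the obstruction lives in an Ext group that vanishes because $R$ is $Q_0$-free), and $S$ is defined as the image of that section; so $S \iso \langle x_9\rangle\ot R$ as $E_1$-modules and $S$ is a split summand of $N\ot R$, hence of $\Ht^*K_2$ once we know the latter contains $N\ot R$ appropriately.

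The final step is the counting/assembly argument: set $T = P[u_2^2]\ot(\langle u_2^2\rangle \oplus N \oplus R \oplus S)$ and let $F$ be a complementary summand (which exists because each of the pieces split off is either realized as a genuine $E_1$-submodule summand or handled by the section argument). To see $F$ is $E_1$-free, compute $H_*(F;Q_0)$ and $H_*(F;Q_1)$: by additivity these equal the total $Q_i$-homology of $\Ht^*K_2$ minus the $Q_i$-homology of $T$. For $Q_0$: $H_*(T;Q_0) = P[u_2^2]\langle u_2^2\rangle \oplus P[u_2^2]\ot\langle x_5\rangle = H_*(H^*K_2;Q_0)$ (the $R$ summand is $Q_0$-free so contributes nothing, $S\iso \langle x_9\rangle\ot R$ is likewise $Q_0$-free), so $H_*(F;Q_0)=0$. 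For $Q_1$: $H_*(N\ot R;Q_1) = \langle x_9\rangle\ot H_*(R;Q_1)$, and $H_*(S;Q_1)=\langle x_9\rangle\ot H_*(R;Q_1)$ too, so in forming $T$ we have the $Q_1$-homology contributions $\langle x_9\rangle$ (from $N$), $H_*(R;Q_1)=\Ebar[\ldots]$ (from $R$), and $\langle x_9\rangle\ot H_*(R;Q_1)$ (from $S$), all tensored with $P[u_2^2]$; matching these against \eqref{rest} and Lemma \ref{Q1g} — here is where one must be careful that $x_9^2 - Q_0 x_{17} \in \im Q_1$, which is exactly why $TP_4[x_9]$ degenerates to $E[x_9]$ in the associated graded and makes the count work — one gets $H_*(F;Q_1)=0$. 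Hence $F$ is $E_1$-free. I expect the main obstacle to be the construction of $R$ and the verification that $N\ot R$ splits off $S$ cleanly: getting an honest $E_1$-submodule (not just an associated-graded statement) with precisely the stated $Q_1$-homology requires choosing the generators and their $Q_0$-, $Q_1$-images with some care so that no unwanted cross-terms appear, and the splitting of $S$ needs the vanishing of the relevant $\Ext^1_{E_1}$ obstruction, which follows from $Q_0$-freeness of $R$ but should be stated explicitly.
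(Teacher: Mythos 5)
Your proposal is correct and follows essentially the same route as the paper: verify that $T$ carries all of the $Q_0$- and $Q_1$-(Margolis) homology of $\Ht^*K_2$, and then invoke the characterization of free $E_1$-modules by vanishing Margolis homology (the paper cites \cite{Wal62}) to conclude that the complement $F$ is free. The one place you diverge is in splitting $S$ off of $N\ot R$ via an abstract $\Ext^1_{E_1}$-vanishing argument, where the paper instead writes down an explicit section $x_9 g_{2i}\mapsto x_9\ot g_{2i}+x_7\ot g_{2i+2}+x_5\ot g_{2i+4}$ on each $L_k$; the explicit formula is the safer route, since $Q_0$-freeness of $R$ alone does not obviously kill the relevant obstruction group.
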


\begin{center}
\textbf{A start on $R$ and $S$.}
\end{center}

For this to make sense, we need to find $R$ and $S$.
The module $R$ is a direct sum of shifted versions of modules $L_k$,
$k \ge 0$,
which have generators $g_{2i}$, $0\le i\le k$, with $Q_1g_{2i}=Q_0g_{2i+2}$
for $0\le i<k$, $Q_0g_0\ne0$, and $Q_1g_{2k}=0$.
For example, $L_3$ is depicted in Figure \ref{L3}.

\bigskip
\begin{minipage}{6in}
\begin{fig}\label{L3}

{\bf The $E_1$-module $L_3$.}

\begin{center}

\begin{\tz}[scale=.35]
\draw (0,0) -- (2,0);
\draw (4,0) -- (6,0);
\draw (8,0) -- (10,0);
\draw (12,0) -- (14,0);
\draw (0,0) to[out=45, in=135] (6,0);
\draw (4,0) to[out=315, in=225] (10,0);
\draw (8,0) to[out=45, in=135] (14,0);
\node at (0,-.7) {$g_0$};
\node at (4,-1) {$g_2$};
\node at (8,1) {$g_4$};
\node at (12,-.7) {$g_6$};
\node at (0,0) {\lb};
\node at (2,0) {\lb};
\node at (4,0) {\lb};
\node at (6,0) {\lb};
\node at (8,0) {\lb};
\node at (10,0) {\lb};
\node at (12,0) {\lb};
\node at (14,0) {\lb};
\end{\tz}
\end{center}
\end{fig}
\end{minipage}

\bigskip

A splitting
map, $\langle x_9\rangle \ot L_k \lra N \ot L_k$, for the
epimorphism $N\ot L_k\to \langle x_9\rangle \ot L_k$  is defined by
 $$ x_9 g_{2i} \mapsto x_9\ot g_{2i}+x_7\ot g_{2i+2}+x_5\ot
g_{2i+4}\text{ for }0\le i\le k-2,$$ $x_9 g_{2k-2} \mapsto x_9\ot g_{2k-2}+x_7\ot g_{2k}$,
and $x_9 \ot g_{2k} \mapsto x_9\ot g_{2k}$.

\bigskip

\begin{center}
\textbf{The $E_1$-module $M_j$}
\end{center}

Let
$$
x_{2^j+1}=u_{2^j+1}+
\begin{cases}u_2u_5^3&j=4\\
u_2u_3u_5^2u_9^2&j=5\\
u_3u_5^2u_9^2u_{17}^2&j=6\\
0&j>6
\end{cases}
\text{ and }
w_{2^j-1}=
\begin{cases}u_2u_3u_5^2&j=4\\
u_3u_5^2u_9^2&j=5\\
0&j>5.
\end{cases}$$
Then $Q_0x_{2^j+1}=u_{2^{j-1}+1}^2+Q_1w_{2^j-1}$, so $Q_0x_{2^j+1}$
and $u_{2^{j-1}+1}^2$ represent the same
$Q_1$-homology class.
Define $E_1$-modules $M_j$
inductively by $M_3=0$, and for $j\ge4$ there is a short exact sequence of $E_1$-modules
\begin{equation}\label{Mdef}
0\to u_{2^{j-2}+1}^2M_{j-1}\to M_j\to M_j'\to0,
\end{equation}
where $M_j'=\langle x_{2^j+1},Q_0x_{2^j+1}\rangle$
and $Q_1x_{2^j+1}=u_{2^{j-2}+1}^2Q_0x_{2^{j-1}+1}$.
The above definitions of the $x_{2^j+1}$
are necessary to get this formula to work right.

There is an isomorphism of $E_1$-modules
$M_j\approx\Sigma^{2^j+1}L_{j-4}$  given by

\begin{equation}\label{LM}
\Sigma^{2^j+1} g_{2i} \mapsto
\begin{cases}
x_{2^j+1} & i = 0 \\
u_{2^{j-2}+1}^2 x_{2^{j-1}+1} & i = 1 \\
u_{2^{j-2}+1}^2
u_{2^{j-3}+1}^2
x_{2^{j-2}+1} & i = 2 \\
u_{2^{j-2}+1}^2
u_{2^{j-3}+1}^2
\cdots
u_{2^{j-i-1}+1}^2
x_{2^{j-i}+1} & 2 < i \le j-4 \\
\end{cases}
\end{equation}

And we have

\begin{equation}\label{two}
H_*(M_j;Q_1)=
\begin{cases}
\langle u_9^2 , u_{17} \rangle & j = 4 \\
\langle u_{17}^2 , u_9^2 u_{17} \rangle & j = 5 \\
\langle u_{33}^2 , u_{17}^2 u_9^2 u_{17} \rangle & j = 6 \\
\langle u_{2^{j-1}+1}^2,u_{2^{j-2}+1}^2\cdots u_9^2x_{17}\rangle & j > 6 \\
\end{cases}
\end{equation}

\bigskip

\begin{center}
\textbf{The $E_1$-module $R$}
\end{center}

Let \begin{equation}\label{R}R=\bigoplus_{j\ge4}M_j\ot
E[u_{2^j+1}^2,u_{2^{j+1}+1}^2,\ldots].\end{equation}
Then $H_*(R;Q_1)=\Ebar[x_{17},u_9^2,u_{17}^2,\ldots]$,
since monomials in $\Ebar$ without $x_{17}$ appear from a first
term (of the two in (\ref{two})) in $H_*(M_j\ot E;Q_1)$, where $j$ is
minimal such that $u_{2^{j-1}+1}^2$ appears in the monomial,
while those with $x_{17}$, and also containing a
product $u_9^2\cdots u_{2^{j-2}+1}^2$ of maximal length, occur as a
second term in $H_*(M_j\ot E;Q_1)$.

\begin{proof}[Proof of Proposition \ref{T}]
We have the $E_1$-submodule $T$ given in Proposition \ref{T}.  Because this contains all
of the $Q_0$ and $Q_1$ homology, what remains must be free over $E_1$
by \cite{Wal62}.
\end{proof}

\begin{proof}[Proof of Theorem \ref{E2}]
We compute $\ext_{E_1}(\zt,T)$
with $T$ as in Proposition \ref{T}. We will not be concerned with the
free $E_1$-module $F$
but later we will give the
Poincar\'e
series for it.
Each copy of $E_1$ in $F$ gives a $\zt$ in $G^{*,0}$ that corresponds to
$Q_0Q_1$.


That
$$
\ext_{E_1}^{*,*}(\zt,P[u_2^2])=P[v,h_0,y_1]
$$
with $y_1\in G_2^{4,0}$ should be clear, given our labeling conventions.
We normally work with the reduced cohomologies, so the $y_1^0$ generator above
would be ignored. The $y_1$ notation is particularly useful when we consider all primes $p$. It is $y_0^{p^1}$ where $y_0\in G_2^{2,0}$. So $|y_1|=2p$.

We compute $\ext_{E_1}(\zt,N)$ in two ways using two different filtrations
of $N$.
From this we
see that the
generator of the towers can be thought of either as $v^2x_9$ or $h_0^2x_5$.

Using Figure \ref{N} as our guide,
our first filtration is
$\langle x_5, x_{8} \rangle $,
$\langle x_7, x_{10} \rangle $,
and
$\langle x_9 \rangle $.
The $\Ext$ on $x_9\in G^{9,0}$ is just $P[v,h_0]$.  For the other two, we get $h_0$-towers
on $x_{10}\in G^{10,0}$ and $x_8\in G^{8,0}$.
The extensions in $N$ show these two $h_0$-towers are connected by multiplication by $v$.
In addition,
a $d_1$ is forced on us by the extensions.
Figure \ref{fig3} describes this completely.

\medskip

\begin{minipage}{6in}
\begin{fig}\label{fig3}

{\bf The first computation of $\ext_{E_1}(\zt,N)$}

\begin{center}

\begin{\tz}[scale=.45]
\draw (-1,0) -- (7,0);
\draw (0,0) -- (0,5);
\draw [dotted] (0,0) -- (2,1);
\draw [dotted] (0,1) -- (2,2);
\draw [dotted] (0,2) -- (2,3);
\draw [dotted] (1,0) -- (7,3);
\draw [dotted] (1,1) -- (7,4);
\draw [dotted] (1,2) -- (7,5);
\draw (1,0) -- (1,5);
\draw (2,0) -- (2,5.5);
\draw (3,1) -- (3,5.5);
\draw (5,2) -- (5,6);
\draw (7,3) -- (7,7);
\draw [- >] (1,0) -- (.1,.9);
\draw [- > ] (1,1) -- (.1,1.9);
\draw [->] (1,2) -- (.1,2.9);
\draw [->] (3,1) -- (2.1,1.9);
\draw [->] (3,2) -- (2.1,2.9);
\draw [->] (3,3) -- (2.1,3.9);
\node at (0,-.5) {$10$};
\node at (2,-.5) {$8$};
\node at (5,-.5) {$5$};
\node at (7,-.5) {$3$};
\draw (9.5,0) -- (17.5,0);
\draw (12,0) -- (12,1);
\draw [dotted] (10,0) -- (12,1);
\draw (15,2) -- (15,6);
\draw [dotted] (15,2) -- (17,3);
\draw [dotted] (15,3) -- (17,4);
\draw [dotted] (15,4) -- (17,5);
\draw (17,3) -- (17,7);
\node at (10,-.5) {$10$};
\node at (12,-.5) {$8$};
\node at (15,-.5) {$5$};
\node at (17,-.5) {$3$};
\node at (8.5,2) {$\Rightarrow$};
\node at (14,2) {$v^2x_9$};
\end{\tz}
\end{center}
\end{fig}
\end{minipage}

\medskip

Again referring to Figure \ref{N}, our second filtration is
$\langle x_9, x_{10} \rangle$,
$\langle x_7, x_{8} \rangle$,
and
$\langle x_5 \rangle$.
Now our $\Ext$ groups are $P[v,h_0]$ on $x_5 \in G^{5,0}$,
$P[v]$ on $x_8 \in G^{8,0}$ and $x_{10} \in G^{10,0}$.
Again, the $d_1$ is forced by the extensions in $N$.
Figure \ref{fig4} describes the result.

\medskip

\begin{minipage}{6in}
\begin{fig}\label{fig4}

{\bf The second computation of $\ext_{E_1}(\zt,N)$}

\begin{center}

\begin{\tz}[scale=.45]
\draw (-1,0) -- (7,0);
\draw [dotted] (0,0) -- (8,4);
\draw [dotted] (2,0) -- (8,3);
\draw (2,0) -- (2,1);
\draw (4,1) -- (4,2);
\draw (6,2) -- (6,3);
\draw (5,0) -- (5,6);
\draw [dotted] (5,0) -- (7,1);
\draw [dotted] (5,1) -- (7,2);
\draw [dotted] (5,2) -- (7,3);
\draw [dotted] (5,3) -- (7,4);
\draw [dotted] (5,4) -- (7,5);
\draw (7,1) -- (7,7);
\draw [->] (5,0) -- (4.1,.9);
\draw [->] (5,1) -- (4.1,1.9);
\draw [->] (7,1) -- (6.1,1.9);
\draw [->] (7,2) -- (6.1,2.9);
\draw (9.5,0) -- (17.5,0);
\draw (12,0) -- (12,1);
\draw  [dotted] (10,0) -- (12,1);
\draw (15,2) -- (15,6);
\draw [dotted] (15,2) -- (17,3);
\draw [dotted] (15,3) -- (17,4);
\draw [dotted] (15,4) -- (17,5);
\node at (8.5,2) {$\Rightarrow$};
\draw (17,3) -- (17,7);
\node at (10,-.5) {$10$};
\node at (12,-.5) {$8$};
\node at (15,-.5) {$5$};
\node at (17,-.5) {$3$};
\node at (8.5,2) {$=$};
\node at (14,2) {$h_0^2x_5$};
\node at (0,-.5) {$10$};
\node at (2,-.5) {$8$};
\node at (5,-.5) {$5$};
\node at (7,-.5) {$3$};
\end{\tz}
\end{center}
\end{fig}
\end{minipage}

\medskip

This concludes the computation
of $\Ext$ for
$P[u_2^2]\otimes (\langle u_2^2 \rangle \oplus N)$ of
Proposition \ref{T}.  The result is the second line of Theorem \ref{E2}.

We need to compute $\Ext$ for $P[u_2^2] \otimes (R \oplus S)$
and show it is the same as the top line in Theorem \ref{E2}.
Since $S \iso \langle x_9 \rangle \otimes R$, all we need to
do is $P[u_2^2] \otimes R$ and ignore the $E[x_9]$.
Similarly we can ignore the $P[u_2^2]$ and the $P[y_1]$ because for
every power of $u_2^2$ we will have a copy of the answer indexed
by powers of $y_1$.  All we have left now is $R$, but $R$ is
just many copies of the various $M_j$ and the indexing for the
number of copies is given by the $\L_{j+1}$.

All that remains
is to show that $\Ext_{E_1}(\zt,M_j) \iso P[v]\otimes W_{j-2}$ with $W_{j-2}$ as in Definition \ref{Wdef}.\footnote{The reason for this awkward shift is that the gradings for $z_j$ which give the elegant statements in Definition \ref{ABdef} and elsewhere are not particularly convenient in developing the $E_2$ statement.}
Recall that $M_j = \Sigma^{2^j+1} L_{j-4}$.  We can filter $L_{j-4}$
into pairs of elements $g_{2i}, Q_0 g_{2i}$, for $0 \le i \le j-4$.
Then $\ext_{E_1}(\zt,M_j)$ has a $P[v]$ on each element $\Sigma^{2^j+1}Q_0 g_{2i}$
which we denote by $z_{j-i-2,j-2} \in G^{2^j+2+2i,0}$. The element $z_{j-2,j-2}$ is often called $z_{j-2}$. There is no $d_1$, but
undoing the filtration does solve the extension problem and gives
us $h_0 z_{k,j-2} = v z_{k-1,j-2}$.  This completes our computation
and thus our proof.
\end{proof}

\begin{rmk}\label{2.18}{\rm
To illustrate
the last computation in the proof,
consider
the generators of the $v$-towers for $\ext_{E_1}(\zt,M_7)$.
They
are $z_5$, $z_4^2$, $z_3^2z_4$, and $z_2^2z_3z_4$, which is what we have
called $z_{5,5}$, $z_{4,5}$, $z_{3,5}$, and $z_{2,5}$, as pictured in
Figure \ref{fig2}. For future reference, we note that (with $\sim$ meaning homologous)}
\begin{equation}\label{zj}z_j=Q_0x_{2^{j+2}+1}\sim u_{2^{j+1}+1}^2=Q_0u_{2^{j+2}+1}=Q_0Q_{j+2}\io_2=Q_{j+2}Q_0\io_2.\end{equation}
\end{rmk}

We now describe briefly the changes required when $p$ is odd. We have
$$H^*(K_2)=P[y_0]\ot P[g_1,g_2,\ldots]\ot E[u_0,u_1,\ldots],$$
with $|y_0|=2$, $|g_j|=2(p^j+1)$, $|u_i|=2p^i+1$, $Q_0y_0=u_0$, $Q_0u_i=g_i$, $Q_1y_0=u_1$, $Q_1u_0=g_1$, $Q_1u_i=g_{i-1}^p$, $i\ge2$. Let $y_1=y_0^p$. Then, similarly to the case $p=2$,
$$H_*(H^*K_2,Q_0)=P[y_1]\ot E[y_0^{p-1}u_0].$$
Let $N=\langle y_0^{p-1}u_0, q=y_0^{p-1}u_1, Q_0q=Q_1(y_0^{p-1}u_0)\rangle$. Then $P[y_1]\oplus P[y_1]\ot N$ carries the $Q_0$-homology and part of the $Q_1$-homology. Similarly to (\ref{rest}), the rest of the $Q_1$-homology is
$$P[y_1]\ot E[q]\ot \overline{E[w_1]\ot TP_p[g_2,g_3,\ldots]},$$
where $w_1=u_2+u_0g_1^{p-1}$. There are $E_1$-submodules $M_j$ for $j\ge2$, defined inductively by $M_2=\langle w_1,g_2=Q_0w_1\rangle$, $M_j'=\langle u_j,g_j=Q_0u_j \rangle$ for $j\ge3$, and for $j\ge3$, there exists a short exact sequence of $E_1$-modules
$$0\to g_{j-1}^{p-1}M_{j-1}\to M_j\to M_j'\to0,$$ with $Q_1u_j=g_{j-1}^p$. There is an isomorphism of $E_1$-modules $M_j\approx \Sigma^{2p^j+1}L_{j-2}$, where $L_j$ is similar to Figure \ref{L3}, but with $i$th generator ($i\ge0$) in grading $2(p-1)i$ rather than $2i$.

Let
$$R=\bigoplus_{j\ge2}M_j\ot TP_{p-1}[g_j]\ot TP_p[g_{j+1},\ldots].$$
Then
$H_*(R;Q_1)=\overline{E[w_1]\ot TP_p[g_2,g_3,\ldots]}$, and so, similarly to Proposition \ref{T}, up to free $E_1$-modules
\begin{equation}\label{Rp}H^*K_2\approx P[y_1]\ot(\langle y_1\rangle\oplus N\oplus R\oplus qR).\end{equation}
Similarly to Figure \ref{fig4}, $\ext_{E_1}(\zp,N)$ can be read off from Figure \ref{Nfig}. This gives the third summand and $vq$ part of the second summand in Theorem \ref{E2}, while the $\langle y_1\rangle$ part of (\ref{Rp}) gives the non-$vq$ part of the second summand. For the first summand in Theorem \ref{E2}, we replace $g_j$ by $z_{j-1}$, and then note that $\ext_{E_1}(\zp,M_j)\approx P[v]\ot W_{j-1}$, similar to Figure \ref{fig2}. For example, $M_3$ has $v$-towers on $g_3$ and $g_2^p$, which are renamed $z_2=z_{2,2}$ and $z_1^p=z_{1,2}$, the generators of the $v$-towers of $W_2$. This completes our sketch of proof of Theorem \ref{E2} when $p$ is odd.

\bigskip
\begin{minipage}{6in}
\begin{fig}\label{Nfig}

{\bf Computation of $\ext_{E_1}(\zp,N)$}

\begin{center}

\begin{\tz}[scale=.45]
\draw (-1,0) -- (25,0);
\draw (9,0) -- (9,6);
\draw (19,1) -- (19,7);
\node at (0,0) {\lb};
\node at (8,1) {\lb};
\node at (18,2) {\lb};
\node at (9,1) {\lb};
\node at (9,2) {\lb};
\node at (19,2) {\lb};
\node at (19,3) {\lb};
\node at (0,-.8) {$4p$};
\node at (0,-1.8) {$y_0^{p-1}g_1$};
\node at (13,-1) {$vq$};
\draw [->] [dashed] (13,-.7) -- (9.2,.8);
\node at (9,-.8) {$2p+1$};
\node at (19,-.8) {$3$};
\draw [->] (9,0) -- (8.3,.7);
\draw [->] (19,1) -- (18.3,1.7);
\node at (22,3) {$\cdot$};
\node at (22.5,3.5) {$\cdot$};
\node at (23,4) {$\cdot$};
\end{\tz}
\end{center} \end{fig}
\end{minipage}

\bigskip
We explain here the reason for the $k_0$ in Definition \ref{ABdef}. In Theorem \ref{E2},  $y_0^{p-1}z_0$ and $z_1$ are in the part that is not multiplied by higher $z$'s when $p=2$, but when $p$ is odd, they form the module $M_2$, whose Ext is $P[v]\ot W_1$, which is multiplied by higher $z$'s. Since $B_k$'s are multiplied by higher $z$'s, but $A_k$'s are not, this explains why $z_1$ is in $B_1$ when $p$ is odd, but not when $p=2$. The reason for the split in Theorem \ref{E2} is the difference in the submodules $N$. Its second class is $y_0^{p-1}Q_1y_0$ in each.  Applying $Q_1$ yields $y_0^{p-2}(Q_1y_0)^2$. This is 0 when $p$ is odd, but not when $p=2$. The reason that the portion of Ext corresponding to $N$ is not multiplied by higher $z$'s is that it gives part of the $Q_0$-homology, and this is not multiplied by higher $z$'s.

\bigskip

We close this section with enumeration of the unimportant $\zt$-classes in $ku^*(K_2)$ when $p=2$.

\begin{center}
\textbf{More on the $E_1$-free part when $p=2$}
\end{center}

If we compute the $\Ext_{E_1}(\zt,F)$ for the $E_1$ free part of $H^*K_2$, we
just get a $\zt$ corresponding to the top element for each copy of $E_1$.
If we find the
Poincar\'e
series (PS) for the free part, all we have to do to get the PS for these elements is multiply
by $\frac{x^4}{(1+x)(1+x^3)}$.
The
Poincar\'e
series for free part is obtained by subtracting the
PS
for the non-free part of Proposition \ref{T} from that of $H^*K_2$.
This is:

$$
\prod_{k\ge 0} \frac{1}{(1-x^{2^k+1})}
-\frac{1}{(1-x^4)}
\bigl(1 + x^5+x^7+x^8+x^9+x^{10}\bigr)
$$
$$
-\frac{1}{(1-x^2)(1-x^4)}
\bigl(
\bigoplus_{j\ge 4}
\bigl(
x^{2^j+1}(1+x^9)(1+x)(1-x^{2j-6}) \prod_{k\ge j} (1+x^{2^{k+1}+2})
\bigr)
\bigr)
$$

The first term is the PS for $H^*K_2$.  The second is the PS for
$P[u_2^2]\otimes (\langle 1 \rangle \oplus N)$.
The last term is more complicated but does the $S$ and $R$ terms.
The $(1-x^4)$ in the denominator is for the $P[u_2^2]$.  The
$x^9$ is the shift that takes $R$ to $S$.  The $(1+x)$ is because they
are $Q_0$ free.  The $x^{2^j+1} (1-x^{2j-6})/(1-x^2)$ is for the odd part of
$M_j$ and the remainder is for $\L$.

This is easy to put into a computer and calculate.  For example, the
number of free generators in degree 79 is 245.

\section{Differentials in the ASS of $ku^*(K_2)$} \label{difflsec}
The main theorem of this section determines the differentials in the ASS for $ku^*(K_2)$.

\begin{thm}\label{diffl}
The differentials in the spectral sequence whose $E_2$-term was given in Theorem \ref{E2} are as follows. All $v$-towers are involved, either as source or target, in exactly one of these. Here  $M$ refers to any monomial (possibly $=1$) in the specified  algebra. Recall that $\L_j=TP_p[z_i:\,i\ge j]$, which is an exterior algebra if $p=2$. Also, recall $y_t=y_1^{p^{t-1}}$. We give reference numbers to the differentials when $p$ is odd, but references to these also apply to the corresponding differential when $p=2$, as the proofs are extremely similar.

First with $p=2$.
\begin{eqnarray*} d_{\nu(i)+2}(y_1^i)&=&h_0^{\nu(i)}v^2q y_1^{i-1},\ i\ge1;\\
d_{\nu(i)+2}(y_1^iz_jM)&=&v^{\nu(i)+2}q y_1^{i-1}z_{j-\nu(i),j}M,\\
&&j\ge \nu(i)+2,\ M\in\L_j;\\
d_{2^t-t}(h_0^{t-2}v^2q y_1^{2^{t-1}-1}M)&=&v^{2^t}z_tM,\\
&&t\ge2,\ M\in P[y_t];\\
d_{2^t-t}(q y_1^{2^{t-1}-1}z_{j-(t-2),j}M)&=&v^{2^t-t}z_tz_jM,\\
&&j\ge t\ge2,\ M\in P[y_t]\ot \L_{j+1}.\end{eqnarray*}

Now with $p$ odd.
\begin{eqnarray}d_{\nu(i)+2}(y_1^i)&=&h_0^{\nu(i)+1}vq y_1^{i-1},\ i\ge1;\label{1}\\
d_{\nu(i)+2}(y_1^iz_jM)&=&v^{\nu(i)+2}q y_1^{i-1}z_{j-\nu(i)-1,j}M,\label{2}\\
&&j\ge\nu(i)+2,\ M\in \L_j;\nonumber\\
d_{p^t-t}(h_0^{t-1}vq y_1^{p^{t-1}-1}M)&=&v^{p^t}z_tM,\label{3}\\
&&t\ge1,\ M\in P[y_t];\nonumber\\
d_{p^t-t}(q y_1^{p^{t-1}-1}z_{j-(t-1),j}M)&=&v^{p^t-t}z_tz_jM,\label{4}\\
&&j\ge t\ge1,\ M\in P[y_t]\ot TP_{p-1}[z_j]\ot \L_{j+1}.\nonumber\end{eqnarray}

\end{thm}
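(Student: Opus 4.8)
The plan is to combine three structural facts. First, the ASS for $ku^*(K_2)$ is $v$-linear and multiplicative, since $ku$ is a ring spectrum. Second, the reduced groups $ku^*(K_2)$ are all $p$-torsion, so $ku^*(K_2)\otimes\Q=0$; hence no infinite $v$-tower can survive to $E_\infty$, and every $v$-tower in the $E_2$-term of Theorem \ref{E2} must be the source or target of a differential. Third, the comparison (\ref{LES}) with $k(1)^*(K_2)$, whose ASS is determined in \cite{DRW}, both pins down a short list of ``primitive'' differentials and forbids additional ones.

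I would first reduce to primitives. By Theorem \ref{E2} the $E_2$-term is generated over $P[h_0,v]$ by $y_1$, $q$, the $z_i$, and the negligible trivial summand, so by the Leibniz rule it suffices to treat these generators. Granting that $q$ and the $z_i$ carry no differential until they are hit, that $d_{\nu(i)+2}(y_1^i)=h_0^{\nu(i)}v^2qy_1^{i-1}$ for $p=2$ (resp.\ $h_0^{\nu(i)+1}vqy_1^{i-1}$ for $p$ odd), and that the secondary differentials $d_{p^t-t}$ are as displayed (landing on the $v$-tower $v^{p^t}z_t$), all four families follow mechanically: e.g.\ $d_{\nu(i)+2}(y_1^iz_jM)=h_0^{\nu(i)}v^2qy_1^{i-1}z_jM=v^{\nu(i)+2}qy_1^{i-1}z_{j-\nu(i),j}M$ using the relations $h_0z_{k,j}=vz_{k-1,j}$ established inside the proof of Theorem \ref{E2}, while $y_1^i$ with $\nu(i)\ge1$ is a $d_r$-cycle for $r<\nu(i)+2$ because $d_r$ is a $\zp$-linear derivation. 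One must run the differentials in order of page: all $d_{\nu(i)+2}$ first, after which the sources of the secondary differentials survive to page $p^t-t$. For $p=2$, $t=2$, the relation $(v^2q)^2=v^4z_2$ of (\ref{x9z4}) together with $d_2(y_1)=v^2q$ already forces $d_2(v^2qy_1)=v^4z_2$, and larger $t$ follow from this by $P[y_1]$-linearity and the comparison below.

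The real content is the primitive differentials, and here (\ref{LES}) and \cite{DRW} are essential. The map $ku\to k(1)=ku/p$ induces a map from the $ku$-ASS to the $k(1)$-ASS, and the connecting map $k(1)\to\Sigma ku$ assembles these into (\ref{LES}). A $v$-tower of the $ku$-$E_2$ either maps isomorphically onto a $v$-tower of the $k(1)$-$E_2$, in which case its behavior is forced by the corresponding $k(1)$-differential from \cite{DRW}, or it maps to zero --- necessarily so once an $h_0$ appears in its image --- in which case (\ref{LES}) links it, through a string of $h_0$-multiplications of a definite length, to a tower of the first kind; that length is exactly the power of $h_0$ in the stated differential. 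Carrying this out for the tower over $v^2q$ (resp.\ $vq$) and for the towers over the $z_t$ yields the two primitive families, and, combined with degree considerations, shows that $q$ and the $z_i$ support no differential before being hit.

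The hard part will be twofold. The lengths and, above all, the $h_0$-exponents are not purely formal: they depend on correctly matching the $h_0$-strings of the $ku$-$E_2$ with $v$-divisibility in $k(1)^*(K_2)$ through (\ref{LES}), which involves the somewhat surprising filtration jumps recorded in Section \ref{LESsec}. Completeness --- that there are no further differentials --- I would get by counting: rational vanishing forces each of the infinitely many $v$-towers to be cancelled, the differentials listed cancel every $v$-tower exactly once, so a further differential would cancel one twice; for the remaining finite ($h_0$-torsion) pieces one checks, via (\ref{LES}) and the known $k(1)^*(K_2)$, that the proposed $E_\infty$ accounts for every class of $k(1)^*(K_2)$ exactly once --- this is the bookkeeping carried out in Sections \ref{LESsec} and \ref{allsec} --- leaving no room for more. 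The odd-primary case runs word for word, using the odd-primary description of $H^*K_2$, the modules $M_j$, $N$, and Figure \ref{Nfig} from Section \ref{E2sec}; this is why the reference numbers (\ref{1})--(\ref{4}) are attached once.
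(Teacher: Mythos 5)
Your overall architecture for the secondary families --- reduce to primitive differentials by the Leibniz rule and force those via the comparison (\ref{LES}) with $k(1)^*(K_2)$ --- is indeed the paper's strategy for (\ref{3}) and (\ref{4}), and your observation that $d_2(v^2qy_1)=v^2q\cdot d_2(y_1)=(v^2q)^2=v^4z_2$ by (\ref{x9z4}) is a valid shortcut for the case $p=2$, $t=2$. The genuine gap is in the first family. The exponent $\nu(i)$ in $d_{\nu(i)+2}(y_1^i)=h_0^{\nu(i)}v^2qy_1^{i-1}$ cannot be read off from $k(1)^*(K_2)$: the mod-$p$ theory records only $\ker(\cdot p)$ and $\coker(\cdot p)$, and a cyclic summand $\Z/p^s$ contributes one $\zp$ to each regardless of $s$, so neither the exact sequence (\ref{LES}) nor your ``accounted for exactly once'' count can distinguish the correct length of this differential from other values of $s$. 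The paper's proof of (\ref{1}) uses an integral input: Browder's theorem \cite{Br} that $H^{2pi+1}(K_2;\Z)$ contains a $\Z/p^{\nu(i)+2}$, which forces a $d_{\nu(i)+2}$ in the ASS converging to $H^*(K_2;\Z)$, transported back along the map of spectral sequences induced by $ku\to H\Z$ (Figure \ref{Brfig}). Some such non-mod-$p$ ingredient is unavoidable and is absent from your proposal.

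Two further points are underestimated. Your reduction of (\ref{2}) to (\ref{1}) and of (\ref{4}) to (\ref{3}) requires the $z_j$ to be permanent cycles; this is not a degree count but comes from Tamanoi's theorem \cite{Tam} that $Q_jQ_0\io_2$ is in the image of $BP^*(K_2)$, via (\ref{zj}). Likewise, showing that the classes $y_1^{ip^{t-1}}z_t$ support no differentials before being hit is a substantial separate argument in the paper (the bound $\Delta\le t$ and Lemma \ref{hell}), not a formality. Finally, for (\ref{3}) the step you describe as matching ``$h_0$-strings'' with $v$-divisibility is exactly where the proof lives: the paper runs an induction over all pairs $(\ell,t)$, ordered by the grading of the target, with three explicit constraints (C1--C3) whose interplay rules out every competing source and target, and the filtration jumps you mention are precisely what make the extension-providing towers nontrivial to locate. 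Your proposal names the right tools for this part but does not indicate how the alternatives are excluded.
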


The proof occupies the rest of this section, except that at the end of the section we explain briefly how this leads to our description of $ku^*(K_2)$ in Section \ref{intro}, except for the exotic extensions.

By \cite[Theorem A]{Tam}, $Q_jQ_0\io_2$ is in the image from $BP^*(K_2)$, and hence must be a permanent cycle in our ASS. Thus by (\ref{zj}),
 $z_j$ is a permanent cycle, and so (\ref{2}) follows from (\ref{1}), and (\ref{4}) follows from (\ref{3}), using $pz_{i,\ell}=vz_{i-1,\ell}$, as noted in \ref{Sdef}.

The differentials (\ref{1}) follow from the result of \cite{Br} that $H^{2pi+1}(K_2;\Z)\approx\bz/p^{\nu(i)+2}\oplus\bigoplus\zp$. See also \cite[Proposition 1.3.5]{Cle} when $p=2$. The ASS converging to $H^*(K_2;\Z)$ has $E_2=\ext_{A_0}(\zt,H^*K_2)$, where $A_0=\langle 1,Q_0\rangle$. We depict this $E_2$ similarly to our ASS for $ku^*(K_2)$. It has an $h_0$-tower for each element of $H_*(H^*K_2,Q_0)$, which was described in Lemma \ref{Q0}. These come in pairs in grading $2pi$ and $2pi+1$ corresponding to $y_1^i$ and $y_1^{i-1}y_0^{p-1}u_0$. In order to get the $\Z/p^{\nu(i)+2}$, there must be a $d_{\nu(i)+2}$-differential, as pictured on the right hand side of Figure \ref{Brfig}.

Similarly to Figures \ref{fig3} and \ref{fig4},  we have, for $p=2$ and $i\ge1$, an $h_0$-tower in the ASS for $ku^*(K_2)$ arising from $G^{4i+1,2}$, called either $h_0^2y_1^{i-1}x_5$ or $v^2y_1^{i-1}q$. There is also an $h_0$-tower arising from $y_1^i\in G^{4i,0}$. The classes $y_1$ and $x_5$ correspond to cohomology classes $u_2^2$ and $u_5+u_2u_3$. Under the morphism $ku^*(K_2)\to H^*(K_2;\Z)$, these towers map across, as suggested in Figure \ref{Brfig}. We deduce the $d_{\nu(i)+2}$-differential claimed in (\ref{1}), promulgated by the action of $v$. Note that  $x_9=q$.

\bigskip
\begin{minipage}{6in}
\begin{fig}\label{Brfig}

{\bf $ ku^*(K_2)\to H^*(K_2;\Z)$}

\begin{center}

\begin{\tz}[scale=.55]
\draw (-1,0) -- (3,0);
\draw (.4,2.2) -- (0,2) -- (0,7);
\draw (2.4,.2) -- (2,0) -- (2,7);
\draw [->] (2,0) -- (.5,4);
\draw (7,0) -- (11,0);
\draw (8,0) -- (8,7);
\draw (10,0) -- (10,7);
\draw [->] (10,0) -- (8.5,4);
\node at (2,-.5) {$2pi$};
\node at (0,-.5) {$2pi+1$};
\node at (10,-.5) {$2pi$};
\node at (8,-.5) {$2pi+1$};
\node at (1,-1.3) {$ku^*(K_2)$};
\node at (9,-1.3) {$H^*(K_2;\Z)$};
\node at (5,2) {$\longrightarrow$};
\end{\tz}
\end{center}
\end{fig}
\end{minipage}

\bigskip
The situation when $p$ is odd is extremely similar, using Figure \ref{Nfig}. The difference is that the $h_0$-tower in $2pi+1$ in the $ku^*$ ASS starts in filtration 1 rather than 2. Its generator can be called $vy_1^{i-1}q$.

In Figure \ref{low}, we depict many of the differentials asserted in Theorem \ref{diffl} in grading $\le36$ when $p=2$. Regarding the third (final) summand in Theorem \ref{E2}, which is $P[y_1]\ot A_1$ when $p=2$, we have included $y_1A_1$, $y_1^3A_1$, and $y_1^5A_1$. Not included are the portions involving (\ref{1}) and (\ref{2}) when $i$ is odd, as this portion self-annihilates. What is shown is (\ref{1}) for $i=2$, 4, and 6, (\ref{3}) for $(t,k)=(1,0)$, $(1,1)$, $(1,2)$, and $(2,0)$, and (\ref{4}) with $t=1$, $k=0$, and $j=4$.

\bigskip

\tikzset{
  testpic2/.pic=
{\draw (2,0) -- (20.6,9.3);
\draw [color=blue] (-1,0) -- (29,0);
\draw (18,0) -- (28.6,5.3);
\draw (27,2) -- (29.6,3.3);
\draw [color=red] (27,2) -- (26,4);
\draw [color=red] (29,3) -- (28,5);
\node at (18,0) {\lb};
\node at (19.6,.8) {\lb};
\node at (22,2) {\lb};
\node at (24,3) {\lb};
\node at (26,4) {\lb};
\node at (28,5) {\lb};
\node at (27,2) {\lb};
\node at (29,3) {\lb};
\node at (2,0) {\lb};
\node at (4,1) {\lb};
\node at (6,2) {\lb};
\node at (8,3) {\lb};
\node at (10,4) {\lb};
\node at (12,5) {\lb};
\node at (14,6) {\lb};
\node at (16,7) {\lb};
\node at (18,8) {\lb};
\node at (20,9) {\lb};
\node at (0,0) {\lb};
\node at (2,1) {\lb};
\node at (4,2) {\lb};
\node at (6,3) {\lb};
\node at (8,4) {\lb};
\draw (0,0) -- (8.6,4.3);
\draw (2,0) -- (2,1);
\draw (4,1) -- (4,2);
\draw (6,2) -- (6,3);
\draw (8,3) -- (8,4);
\draw (5,0) -- (9.6,2.3);
\draw [color=red] (5,0) -- (4,2);
\draw [color=red] (7,1) -- (6,3);
\draw [color=red] (9,2) -- (8,4);
\node at (5,0) {\lb};
\node at (7,1) {\lb};
\node at (9,2) {\lb};
\node at (0,-.5) {$36$};
\node at (4,-.5) {$32$};
\node at (8,-.5) {$28$};
\node at (12,-.9) {$24$};
\node at (16,-.5) {$20$};
\node at (20,-.5) {$y_1^4$};
\node at (20,-.9) {$16$};
\node at (24,-.5) {$12$};
\node at (28,-.5) {$8$};
\node at (6,0) {\lb};
\node at (8,0) {\lb};
\node at (14,0) {\lb};
\node at (16,0) {\lb};
\node at (22,0) {\lb};
\node at (24,0) {\lb};
\node at (8,1) {\lb};
\node at (16,1) {\lb};
\node at (24,1) {\lb};
\draw (6,0) -- (8,1) -- (8,0);
\draw (14,0) -- (16,1) -- (16,0);
\draw (22,0) -- (24,1) -- (24,0);
\node at (22,-.4) {$y_1z_1$};
\node at (0,.4) {$z_2^2$};
\node at  (1.7,.3) {$z_3$};
\node at (2.4,-.4) {$y_1^4z_2$};
\node at (5.3,-.4) {$y_1qz_2$};
\node at (18,-.4) {$z_2$};
\node at (26.3,2) {$v^2y_1q$};
\draw (10,0) -- (20.6,5.3);
\draw (19,5.3) -- (19,2) -- (21,3) -- (21,4) -- (19,3);
\draw [color=red] (19,2) -- (18,4);
\draw [color=red] (21,3) -- (20,5);
\draw [color=red] (19,3) -- (18,8);
\draw [color=red] (21,4) -- (20,9);
\node at (10,-.4) {$y_1^2z_2$};
\node at (18.3,2) {$v^2y_1^3q$};
\draw (21,3) -- (21.6,3.3);
\draw (21,4) -- (21.6,4.3);
\draw (28,0) -- (28,1.3);
\draw (27,2) -- (27,4.3);
\draw [color=red] (28,0) -- (27,3);
\draw [color=red] (28,1) -- (27,4);
\node at (28.3,.3) {$y_1^2$};
\node at (28,0) {\lb};
\node at (28,1) {\lb};
\node at (27,3) {\lb};
\node at (27,4) {\lb};
\draw (20,0) -- (20,1.3);
\node at (20,0) {\lb};
\node at (20,1.2) {\lb};
\draw (20,0) -- (20.6,.3);
\draw (20,1.2) -- (20.6,1.5);
\draw [color=red] (20,0) -- (19,4);
\draw [color=red] (20,1.2) -- (19,5);
\node at (10,0) {\lb};
\node at (12,1) {\lb};
\node at (14,2) {\lb};
\node at (16,3) {\lb};
\node at (18,4) {\lb};
\node at (20,5) {\lb};
\node at (19,2) {\lb};
\node at (19,3) {\lb};
\node at (19,4) {\lb};
\draw (19,4) -- (19.6,4.3);
\draw (19,5) -- (19.6,5.3);
\node at (19,5) {\lb};
\node at (21,3) {\lb};
\node at (21,4) {\lb};
\draw (21,4) -- (21,4.3);

\node at (2.3,0) {\lb};
\node at (4.3,1) {\lb};
\node at (6.3,2) {\lb};
\node at (8.3,3) {\lb};
\node at (10.3,4) {\lb};
\node at (12.3,5) {\lb};
\draw (2.3,0) -- (12.9,5.3);
\draw (11,2) -- (13.6,3.3);
\node at (11,2) {\lb};
\node at (13,3) {\lb};
\draw [color=red] (11,2) -- (10.3,4);
\draw [color=red] (13,3) -- (12.3,5);
\draw (12,0) -- (12,.5);
\node at (12,0) {\lb};
\node at (12,-.4) {$y_1^6$};
\draw (11,2) -- (11,3.3);
\node at (11,3) {\lb};
\draw [color=red] (12,0) -- (11,3);
\node at (11,1.6) {$v^2y_1^5q$};
\draw [color=red] (19,3) -- (18,8);
\draw [color=red] (21,4) -- (20,9);
\draw (12,0) -- (12.6,.3);
\draw (11,3) -- (11.6,3.3);
}}
\bigskip
\begin{minipage}{6in}
\begin{fig}\label{low}

{\bf Some differentials with $p=2$}
\begin{center}
\begin{tikzpicture}
  \pic[rotate=90,scale=.6,transform shape] {testpic2};
\end{tikzpicture}
\end{center}
\end{fig}
\end{minipage}

\bigskip

In order to establish the remaining differentials, we will need the following description of $k(1)^*(K_2)$, which is proved in \cite{DRW}.
We shift by 1 the subscripts of the classes $z_j$ and $w_j$ used there.
The formulas for $r(j)$ and $r'(j)$ are as in \cite{DRW}. We recapitulate some of their properties. Those stated here but not there are easily proved by induction.
\begin{prop} \cite{DRW} \label{rprop}For $j\ge0$, $z_j$ is the reduction of the class in $ku^*(K_2)$ and satisfies $|z_j|=2(p^{j+1}+1)$. The classes $w_j$ satisfy $|w_1|=2p^2+1$, $|w_2|=2p^3-2p^2+6p-3$, and $w_{j+2}=y_j^{p-1}w_jz_{j+1}^{p-1}$. The integers $r(j)$ and $r'(j)$ satisfy the following properties.
\begin{eqnarray}&&r(0)=1,\ r(1)=p,\ r(j+2)=r(j)+p^{j+1}(p-1)+1;\label{rrec}\\
&&r'(0)=p-1,\ r'(1)=p^2-p,\nonumber\\
&&r'(j+2)=r'(j)+p^{j+2}(p-1)-1,\label{rprec}\\
&&r(j)-r'(j-1)=j,\label{r1}\\
&&r(j)+r'(j)=p^{j+1},\label{r2}\\
&&r(j+2)+r'(j)=p^{j+2}+1,\label{r3}\\
&&(p-1)(r(j-1)+j-1)<p^j,\label{r4}\\
&&p^{j+1}-p^j\le r'(j)<p^{j+1}-p^{j-1}.\label{r5}
\end{eqnarray}
\end{prop}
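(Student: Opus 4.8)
The plan is to dispose of the two parts of the proposition in turn. The statements about the classes $z_j$ and $w_j$ --- that $z_j$ is the reduction of the integral class and has $|z_j| = 2(p^{j+1}+1)$, the gradings $|w_1| = 2p^2+1$ and $|w_2| = 2p^3-2p^2+6p-3$, and the relation $w_{j+2} = y_j^{p-1}w_jz_{j+1}^{p-1}$ --- together with the step-two recursions (\ref{rrec}) and (\ref{rprec}) and the four base values $r(0),r(1),r'(0),r'(1)$, I would simply quote from \cite{DRW}, after shifting the subscripts of their $z$'s and $w$'s down by one; the grading $|z_j| = 2(p^{j+1}+1)$ is moreover consistent with the element $z_j \in G_2^{2(p^{j+1}+1),0}$ already introduced in Section \ref{E2sec}. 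All that then remains is to verify the identities (\ref{r1})--(\ref{r5}).

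For (\ref{r1}), (\ref{r2}) and (\ref{r5}) I would induct on $j$ in steps of two, using the step-two recursions. For (\ref{r2}) one checks the two base cases $r(0)+r'(0) = p$ and $r(1)+r'(1) = p^2$, and then adds (\ref{rrec}) to (\ref{rprec}) to get $r(j+2)+r'(j+2) = \bigl(r(j)+r'(j)\bigr) + (p-1)(p^{j+1}+p^{j+2}) = p^{j+1}\bigl(1+(p-1)(1+p)\bigr) = p^{j+3}$. For (\ref{r1}) the base cases are $r(1)-r'(0) = 1$ and $r(2)-r'(1) = 2$, and subtracting (\ref{rprec}) (with index $j+1 = (j-1)+2$) from (\ref{rrec}) cancels the $p^{j+1}(p-1)$ terms and leaves the constant increment $2$, so $r(j+2)-r'(j+1) = \bigl(r(j)-r'(j-1)\bigr)+2$. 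For (\ref{r5}) --- which is really meant for $j \ge 1$, since $p^{j-1}$ is not integral at $j=0$ --- the induction from $j-2$ to $j$ through (\ref{rprec}) reduces, after feeding in the inductive bounds on $r'(j-2)$, to the elementary facts $p^{j-2}(p-1) \ge 1$ and $2p^{j-1} \le p^j$.

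The last two identities I would then derive from the ones already in hand rather than by a fresh induction. Identity (\ref{r3}) follows at once by substituting $r'(j) = p^{j+1}-r(j)$ from (\ref{r2}) into the recursion (\ref{rrec}) for $r(j+2)$, which collapses to $p^{j+2}+1$. For (\ref{r4}), writing $r(j-1) = p^j - r'(j-1)$ via (\ref{r2}) and using the lower bound $r'(j-1) \ge p^j - p^{j-1}$ from (\ref{r5}) gives $(p-1)\bigl(r(j-1)+j-1\bigr) \le p^j - p^{j-1} + (p-1)(j-1)$, which is strictly less than $p^j$ because $(p-1)(j-1) < p^{j-1}$ for all $p \ge 2$ and $j \ge 1$ (itself a one-line induction). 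I expect the only slightly delicate point to be purely clerical: the recursions step $j$ by two, so each induction separates into even and odd $j$ and both base cases must be checked, and one should state (\ref{r5}) for $j \ge 1$; there is no genuine obstacle.
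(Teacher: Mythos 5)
Your proposal is correct and takes essentially the same route as the paper, which offers no detailed argument: it simply imports the definitions, gradings, base values and step-two recursions from \cite{DRW} and remarks that the properties not stated there "are easily proved by induction." Your write-up supplies exactly those inductions (on (\ref{r1}), (\ref{r2}), (\ref{r5})) and the two substitutions giving (\ref{r3}) and (\ref{r4}), and the computations check out.
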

\begin{thm}\label{DRWthm}\cite{DRW} For any $p$, $k(1)^*(K_2)$ is a trivial $k(1)^*$-module plus
\begin{eqnarray*}&&\bigoplus_{j>0}TP_{r(j)}[v]\ot P[y_{j+1}]\ot TP_{p-1}[y_j]\ot \Ebar[w_{j}]\ot E[w_{j+1}]\ot \L_{j+1}\\
&\oplus&\bigoplus_{j\ge1}TP_{r'(j-1)}[v]\ot P[y_{j}]\ot E[w_{j}]\ot\overline{TP}_p[z_{j}]\ot \L_{j+1}\\
&\oplus&P[y_1]\ot\biggl(\Ebar[y_0^{p-1}z_0]\oplus\begin{cases}\Ebar[z_1]&p=2\\ 0&p\text{ odd}\end{cases}\biggr)\oplus\bigoplus_{j\ge1} P[y_1]\ot E[q ]\ot \Ebar[z_j^p]\ot \L_{j+1}. \end{eqnarray*}\end{thm}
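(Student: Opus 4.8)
The plan is to compute $k(1)^*(K_2)$ from its own Adams spectral sequence, run exactly as in Section~\ref{E2sec} but with the exterior algebra $E[Q_1]$ in place of $E_1=E[Q_0,Q_1]$. Since $H^*(k(1);\zp)\cong\mathcal A/\mathcal AQ_1$, a change-of-rings isomorphism gives $E_2\cong\Ext_{E[Q_1]}(\zp,H^*K_2)$, and since every $E[Q_1]$-module splits as free modules plus trivial ones, this $E_2$ is $\zp[v]\otimes H_*(H^*K_2;Q_1)$ together with a filtration-$0$ contribution from the $E[Q_1]$-free part. Thus Lemmas~\ref{Q1} and \ref{Q1g} (and at odd $p$ the analogous description at the end of Section~\ref{E2sec}) already hand us the $E_2$-term: infinite $v$-towers on $u_2^2$ (the class $y_1$), on $x_9=q$ and $x_{17}$, and on the squares $u_{2^{j+1}+1}^2$ (the classes $z_j$, by (\ref{zj})), indexed by monomials as in the $M_j\otimes\L_{j+1}$ bookkeeping, plus the trivial $k(1)^*$-summand coming from the $E_1$-free (hence $E[Q_1]$-free) part $F$ of $H^*K_2$.

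The decisive extra input is the vanishing $\widetilde{K(1)}^*(K_2)=0$: by Ravenel--Wilson, $\widetilde{K(n)}^*K(\zp,q)=0$ whenever $q>n$, and here $q=2>1=n$. Since $K(1)=v^{-1}k(1)$, this forces $k(1)^*(K_2)$ to be, apart from the trivial summand, entirely $v$-power torsion; hence every infinite $v$-tower in $E_2$ must be truncated by a differential. So the whole content of the theorem is the list of $v$-heights --- recorded by the exponents $r(j)$ and $r'(j)$ --- together with the extensions $pz_{i,\ell}=vz_{i-1,\ell}$, $pz_{k_0,\ell}=0$, and $w_{j+2}=y_j^{p-1}w_jz_{j+1}^{p-1}$, and the task reduces to computing the lengths of these differentials.

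These lengths I would determine by induction on $j$, driven by the interplay of $Q_0$ and $Q_1$ in the $E_1$-module structure of $H^*K_2$ that was already analyzed in Section~\ref{E2sec}: the short exact sequences (\ref{Mdef}) assembling $M_j$ out of $u_{2^{j-2}+1}^2M_{j-1}$ and $M_j'$ should, once the $v$-action on $\Ext_{E[Q_1]}(\zp,M_{j-1})$ is known, reproduce exactly the recursions (\ref{rrec}) for $r(j)$ and (\ref{rprec}) for $r'(j)$, while the side relations (\ref{r1})--(\ref{r5}) provide the bookkeeping that keeps consecutive $v$-towers correctly separated and accounts for the truncated factors $\overline{TP}_p[z_j]$ and $TP_{p-1}[y_j]$. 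Naturality --- for the $H$-space multiplication on $K_2$ and for comparison with $K(\zp,1)$, where connective $K$-theory is classical --- should settle the base cases (the values $r(0),r(1),r'(0),r'(1)$ and the classes $w_1,w_2$) and identify the products $w_{j+2}=y_j^{p-1}w_jz_{j+1}^{p-1}$.

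Finally I would resolve the extensions, reading the $v$-multiplications off the $M_j$-towers just as in the proof of Theorem~\ref{E2}, reassemble the surviving towers into the three displayed summands, and verify against a Poincar\'e-series count (of the sort at the end of Section~\ref{E2sec}) that every $v$-torsion class of $k(1)^*(K_2)$ is produced exactly once and the trivial summand is exactly the filtration-$0$ classes from $F$. The main obstacle is the inductive step just described: proving that the $v$-Bockstein pairing is precisely the one forced by $\widetilde{K(1)}^*(K_2)=0$, with no length jumps beyond those encoded in (\ref{rrec})--(\ref{rprec}); this is where one genuinely needs the $\mathcal A$-module structure of $H^*K_2$ and the Ravenel--Wilson vanishing together rather than either alone, and it is carried out in \cite{DRW}.
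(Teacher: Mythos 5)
You should first be aware that this paper does not prove Theorem \ref{DRWthm} at all: it is quoted from \cite{DRW}, and the only original content attached to it here is the remark immediately following the statement, namely that the last displayed line ``was not discussed in \cite{DRW}; it is from free $E[Q_1]$ summands which are not part of free $E_1$ summands, and plays a very important role.'' Your outline --- change of rings to $\Ext_{E[Q_1]}(\zp,H^*K_2)$, the splitting of $E[Q_1]$-modules into free plus trivial pieces, the Ravenel--Wilson vanishing $\widetilde{K(1)}^*(K_2)=0$ forcing every $v$-tower to be finite, and an induction recovering the recursions (\ref{rrec})--(\ref{rprec}) for the tower lengths --- is a reasonable description of the strategy of \cite{DRW}, and you correctly defer the genuinely hard step (pinning down the differential lengths) to that reference; there is no internal proof in this paper to measure you against on those points.

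There is, however, one concrete error in your setup, and it lands exactly on the part of the theorem that this paper adds and then uses. You assert that the filtration-$0$ contribution to $\Ext_{E[Q_1]}(\zp,H^*K_2)$ comes from ``the $E_1$-free (hence $E[Q_1]$-free) part $F$'' and hence folds into the anonymous trivial summand. The containment goes the wrong way for your purposes: $F$ is indeed $E[Q_1]$-free, but the $E[Q_1]$-free part of $H^*K_2$ is strictly larger than $F$, since the complementary summand $T$ of Proposition \ref{T} is built from modules ($N$ and the $M_j$'s) that are far from $Q_1$-acyclic-free decompositions over $E_1$ but still contain free $E[Q_1]$ sub-summands. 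It is precisely those extra free $E[Q_1]$ generators that produce the third line
$P[y_1]\ot\bigl(\Ebar[y_0^{p-1}z_0]\oplus\cdots\bigr)\oplus\bigoplus_{j\ge1}P[y_1]\ot E[q]\ot\Ebar[z_j^p]\ot\L_{j+1}$
of the theorem. If these classes are discarded into an unidentified trivial module, the accounting in Sections \ref{LESsec} and \ref{allsec} --- where every element of $k(1)^*(K_2)$ must be matched exactly once against $\ker(\cdot p)$ and $\coker(\cdot p)$ on $ku^*(K_2)$ in the exact sequence (\ref{LES}) in order to rule out further exotic extensions --- cannot be carried out. The fix is to enumerate the $E[Q_1]$-free generators of $T$, not merely of $F$, when assembling the $E_2$-term; only the generators coming from $F$ belong to the unnamed trivial summand.
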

\ni The last line was not discussed in \cite{DRW}; it is from free $E[Q_1]$ summands which are not part of free $E_1$ summands, and plays a very important role.

Now we continue the proof of Theorem \ref{diffl}.
We have already proved (\ref{1}) and (\ref{2}). As already noted, the $z_j$'s are infinite cycles by \cite{Tam}, and so the differentials in (\ref{4}) are implied as soon as the corresponding differential in (\ref{3}) is proved.

 As a warmup, we consider the cases $t=2$ and 3 of (\ref{3}) when $p=2$. We make extensive use of the exact sequence (\ref{LES}). Referring to Figure \ref{low} is useful.

In even gradings $\le14$, $k(1)^*(K_2)=0$ in positive filtration, by Theorem \ref{DRWthm}. Thus the map $ku^*(K_2)\to k(1)^*(K_2)$ implies that in the ASS for $ku^*(K_2)$,
  $v^sz_2$ must be hit by a differential or divisible by 2 for $s\ge2$. In grading $<8$, there is nothing that can divide it, and the only odd-grading $v$-tower in that range is on $v^2y_1q$. Thus $d_2(v^2y_1q)=v^4z_2$, the case $t=2$, $M=1$ of (\ref{3}). Since $d_2(y_1^{2k})=0$ by (\ref{1}), the case $t=2$ of (\ref{3}) follows for any $M$ by the derivation property. An analogous argument does not work at the odd primes.

Similarly $v^sz_3$ must be hit or divisible for  $s\ge4$, and examination of options in Figure \ref{low} shows that we must have $d_5(h_0v^2y_1^3q)=v^8z_3$, preceded by extensions. Since $d_5(y_1^8)=h_0^3v^2y_1^7q$, we deduce the case $t=3$, $M\in P[y_1^8]$ of (\ref{3}) using the derivation property, (\ref{x9z4}) and $h_0z_2=0$. We do not have {\it a priori} knowledge that $y_1^4z_3$ is a permanent cycle in the ASS of $ku^*(K_2)$. However, if it supported a nonzero differential, then the tower of $v$-height 4 on $y_1^4z_3$ in the ASS of $k(1)^*(K_2)$ would have to map to $v^tC$ for $0\le t\le3$ for some $C$ in positive filtration in grading 51 in the ASS of $ku^*(K_2)$. Then $v^4C$ must be  $d_r(B)$ with $r\ge5$ and $B$ in filtration 0 in grading 42. ($B$ cannot have higher filtration since everything is $v$-towers, and $v^3C$ cannot be hit.) But the only possible $B$ is $y_1^6z_2$, and we already know that $v^4y_1^6z_2\in\im(d_4)$. (Ordinarily this would not preclude the possibility of $B$ supporting a differential, but it does since everything is $v$-towers.) Thus $y_1^4z_3$ is a permanent cycle, and consideration of its image in $k(1)^*(K_2)$ implies that $v^sy_1^4z_3$ is hit by a differential for some $s\ge4$. The only element in odd grading $<42$ not yet accounted for is $h_0v^2y_1^7q$ in grading 33, and so this must be the source of the differential. This is the case $t=3$, $M=y_1^4$ of (\ref{3}). The validity for all  $M=y_1^{8i+4}$ (and $t=3$) now follows similarly to what we did for $M=y_1^{8i}$ at the beginning of this paragraph.

Now we switch our attention to the odd primes. The situation when $p=2$ is extremely similar. We want to prove the following version of (\ref{3}).
\begin{equation}\label{3n}d_{p^t-t}(h_0^{t-1}vqy_1^{(i+1)p^{t-1}-1})=v^{p^t}y_1^{ip^{t-1}}z_t.\end{equation}

Now we work toward proving this.
We illustrate with $p=5$, but it should be clear how it generalizes to an arbitrary prime. One new thing is the Divisibility Criterion as invoked in \cite{DRW}. Each mod $(p-1)$ value of $i$ can be considered separately. We will consider (\ref{3n}) with $p=5$ and $i=4\ell$; other congruences follow similarly. We index the differential (\ref{3n}) by $(\ell,t)$. We write $T$ (for vertical Tower) for the class $h_0^{t-1}vq y_1^{(4\ell+1)5^{t-1}-1}$, and $M$ (for Monomial) is $y_1^{4\ell5^{t-1}}z_t$. We will often afflict $T$ and $M$ with the parameters $(\ell,t)$. We write $|T|$ for $\frac12(|T|+1)$. The $\frac12$ avoids extraneous factors of 2 that always cancel out. The $+1$ is so that this indicates the grading (times $\frac12$) of the class that it hits. $|M|$ denotes $\frac12$ times the grading of $M$, and $M'$ equals $\frac12$ times the grading of $v^hM$, where $h$ is the $v$-height of $M$ in $k(1)^*(K_2)$. We wish to show that the differentials {\it must be} as claimed.

There are three types of constraints on the differentials involving these classes.
Constraint C1 is that if $T\to M$ (by which we mean that a certain $T$ class supports a differential hitting $v^iM$ for some $i$ and a certain monomial $M$), then $|T|\le M'$. (This says that the $v$-tower on $M$ cannot be hit while its image in $k(1)^*(K_2)$ is nonzero.) Constraint C2 says that if $T(5\ell+1,t-1)\to M_1$ and $T(\ell,t)\to M_2$, then $|M_2|>|M_1|$. Since $|T(5\ell+1,t-1)|=|T(\ell,t)|$, this says that as you move up an $h_0$ tower\footnote{Note that $h_0T(5\ell+1,t-1)=T(\ell,t)$.}, differentials must get longer (unless they are hitting into an $h_0$ tower, which is not the case here.) Constraint C3  says that if $T_2\to M_1$, then there exists $M_3$ such that $|M_1|\ge|M_3|\ge M_1'$ and either
$$M_3'\le|T_2|$$
or
$$T_3\to M_3\text{ has already been proved, and } |T_3|\le |T_2|.$$  The reason for C3 is that there must be extensions into the $M_1$-tower from grading $M_1'$ to $|T_2|+4$. The nonzero classes on the $v$-tower (on $M_3$) supporting the extensions must go to at least $|T_2|+4$, and it has nonzero classes at least to $M_3'+4$, and if $T_3\to M_3$ was already proved, it has nonzero classes to $|T_3|+4$. Note that we are saying that the $v$-tower on $M_1$ maps to 0 in $k(1)^*(K_2)$ once we get to grading $M_1'$ (and hence in gradings $\le M_1'$ it is either hit by differentials or is divisible by $p$). There might be classes of higher filtration in $k(1)^*(K_2)$ to which it could map, but, if so, we can modify the generator of the $M_1$ tower by the class on the tower sitting above it. Also note that it is possible that extensions from the tower $M_3$ don't start from the generator, if there are $h_0$-extensions on the tower for awhile.  See Figure \ref{figM3}. There is an exception to the C3 requirement for $T(\ell,1)\to M(\ell,1)$. Here the extension into $v^4y_1^{4\ell}z_1$ is obtained from the special class $y_1^{4\ell}y_0^4z_0$.

\bigskip

\begin{minipage}{6in}
\begin{fig}\label{figM3}

{\bf The role of $M_3$}

\begin{center}

\begin{\tz}[scale=.25]
\draw (0,0) -- (55,11);
\draw (-1,0) -- (60,0);
\draw (20,0) -- (55,7);
\draw (18,0) -- (28,2);
\draw (20,0) -- (20,.4);
\draw (24,.8) -- (24,1.2);
\draw (28,1.6) -- (28,2);
\draw [->] (51,0) -- (50,10);
\draw [dotted] (30,2) -- (30,6);
\draw [dotted] (49,5.8) -- (49,9.8);
\draw [dotted] (35,3) -- (35,7);
\draw [dotted] (40,4) -- (40,8);
\draw [dotted] (45,5) -- (45,9);
\node at (0,-.8) {$|M_1|$};
\node at (20,-.8) {$M_3$};
\node at (50,10.8) {$|T_2|$};
\node at (48.5,5) {$|T_2|+4$};
\node at (30,-.8) {$M_1'$};
\end{\tz}\end{center}
\end{fig}
\end{minipage}

\bigskip
With the above conventions, we have $|T|=5^t(4\ell+1)+1$, $|M|=5^t(4\ell+5)+1$, and $M'=|M|-4r'(t-1)$, where $4r'(t-1)$ has the values 16, 80, 412, and 2076 for $t=1$, 2, 3, and 4. Increasing from $t$ to $t+2$ increases this by $4^2\cdot5^{t+1}-4$. We consider the cases in order of increasing $|M|$ and, for equal values of $|M|$, increasing $\ell$. We tabulate a representative sample in Table \ref{tabl}. We omit listing values of $\ell\equiv3,4$ mod 5 because they behave similarly to $\ell\equiv2$.

\vfill\eject
\begin{table}[h]
\caption{Cases in order}
\label{tabl}
\renewcommand{\arraystretch}{1.15}

\begin{tabular}{cc|ccc|ccc|ccc}
$\ell$&$t$&$|T|$&$|M|$&$M'$&\qquad\qquad&$\ell$&$t$&$|T|$&$|M|$&$M'$\\
\hline
0&1&6&26&10&&36&1&726&746&730\\
1&1&26&46&30&&37&1&746&766&750\\
2&1&46&66&50&&7&2&726&826&746\\
0&2&26&126&46&&40&1&806&826&810\\
5&1&106&126&110&&41&1&826&846&830\\
6&1&126&146&130&&42&1&846&866&850\\
7&1&146&166&150&&8&2&826&926&846\\
1&2&126&226&146&&45&1&906&926&910\\
10&1&206&226&210&&46&1&926&946&930\\
11&1&226&246&230&&47&1&946&966&950\\
12&1&246&266&250&&9&2&926&1026&946\\
2&2&226&326&246&&50&1&1006&1026&1010\\
15&1&306&326&310&&51&1&1026&1046&1030\\
16&1&326&346&330&&52&1&1046&1066&1050\\
17&1&346&366&350&&1&3&626&1126&714\\
3&2&326&426&346&&10&2&1026&1126&1046\\
20&1&406&426&410&&55&1&1106&1126&1110\\
21&1&426&446&430&&56&1&1126&1146&1130\\
22&1&446&466&450&&57&1&1146&1166&1150\\
4&2&426&526&446&&11&2&1126&1226&1146\\
25&1&506&526&510&&60&1&1206&1226&1210\\
26&1&526&546&530&&61&1&1226&1246&1230\\
27&1&546&566&550&&62&1&1246&1266&1250\\
0&3&126&626&214&&&&$\vdots$&&\\
5&2&526&626&546&&154&1&3086&3106&3090\\
30&1&606&626&610&&0&4&626&3126&1050\\
31&1&626&646&630&&5&3&2626&3126&2714\\
32&1&646&666&650&&30&2&3026&3126&3046\\
6&2&626&726&646&&155&1&3106&3126&3110\\
35&1&706&726&710&&156&1&3126&3146&3130
\end{tabular}
\end{table}

\bigskip
Before presenting a general argument, we illustrate with an example, starting with $M_1=M(1,3)$. We will see that it builds a chart which is $y_1^{100}$ times Figure \ref{oddchart}. In Table \ref{tabl}, we have $|M_1|=1126$. Its $v$-tower is truncated at height $p^3=125$ by a differential on $T(1,3)$, with $|T(1,3)|=626$, using our grading conventions. Playing the role of $M_3$ is $M(6,2)$ with $|M_3|=726$. We have $M_1'=714$. It is $v^3M_3$ which supports the extension in ''grading`` 714. Note that for $0\le i\le 2$, $h_0v^iM_3\ne0$, and so $p\cdot v^iM_3$ is not a $v$-multiple of $M_1$. (In Figure \ref{oddchart}, the class $y_2^{p-1}z_2$ corresponds to $M_3$.) From Table \ref{tabl}, we see that $M_3'=646$, which means that in ``grading'' $\le646$, the $v$-tower on $M_3$ is either hit by a differential or divisible by $p$. Table \ref{tabl} says it is hit by a differential in 626. In ``gradings'' from 646 to 630, it is divisible by $p$. It has its own, distinct, $M_3$ class, namely $M(31,1)$. In Figure \ref{oddchart}, this latter class corresponds to $y_1^{p-1}y_2^{p-1}z_1$.

\medskip
Now we start the proof. We begin with a lemma.
\begin{lem} For $M=M(\ell',t')$ with $|M(5\ell+1,t-1)|<|M|<|M(\ell,t)|$, we have $t'<t$, $|T(\ell,t)|<|T(\ell',t')|$, and $|M(5\ell+1,t-1)|<M'$.\end{lem}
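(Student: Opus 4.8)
The plan is to convert all three assertions into elementary estimates on powers of $5$, using the closed formulas recorded just before the lemma: for any pair $(a,b)$ with $b\ge1$ one has $|T(a,b)|=5^b(4a+1)+1$ and $|M(a,b)|=5^b(4a+5)+1$, and, since $M(a,b)=y_b^{4a}z_b$ occurs (for $b\ge1$) only in the second summand of Theorem \ref{DRWthm}, where the $v$-tower on $z_b$ has height $r'(b-1)$, also $M(a,b)'=|M(a,b)|-4r'(b-1)$. With these, the hypothesis on $M=M(\ell',t')$ becomes $5^{t-1}(20\ell+9)<5^{t'}(4\ell'+5)<5^{t-1}(20\ell+25)$. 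I would first prove $t'<t$; the other two conclusions then follow quickly from $t'\le t-1$.

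For $t'<t$: were $t'\ge t$, then $5^{t'}(4\ell'+5)$ would be a multiple of $5^t$ strictly inside the interval $(5^{t-1}(20\ell+9),\,5^t(4\ell+5))$. The lower endpoint is divisible by $5^{t-1}$ but not by $5^t$ (its cofactor $20\ell+9$ is prime to $5$), while the upper endpoint is divisible by $5^t$, so the only such multiples are $5^t(4\ell+2)$, $5^t(4\ell+3)$, $5^t(4\ell+4)$. As $5^{t'}(4\ell'+5)$ is odd, it must equal $5^t(4\ell+3)$; reducing the resulting identity $5^{t'-t}(4\ell'+5)=4\ell+3$ modulo $4$ gives $1\equiv3$, a contradiction.

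With $t'\le t-1$ in hand, the inequality $|T(\ell,t)|<|T(\ell',t')|$ is a one-line estimate: $5^{t'}(4\ell'+1)=5^{t'}(4\ell'+5)-4\cdot5^{t'}>5^{t-1}(20\ell+9)-4\cdot5^{t-1}=5^t(4\ell+1)$. For the last assertion I would set $\delta=(4\ell'+5)-5^{t-1-t'}(20\ell+9)$, so that $|M(\ell',t')|-|M(5\ell+1,t-1)|=5^{t'}\delta$ with $\delta>0$; since both $4\ell'+5$ and $5^{t-1-t'}(20\ell+9)$ are $\equiv1\pmod4$, we get $\delta\equiv0\pmod4$, hence $\delta\ge4$. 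Because $r'(t'-1)<5^{t'}$ --- by (\ref{r5}), or directly from $r'(0),\dots,r'(3)=4,20,103,519$ --- it follows that $4r'(t'-1)<4\cdot5^{t'}\le5^{t'}\delta=|M(\ell',t')|-|M(5\ell+1,t-1)|$, which rearranges to $|M(5\ell+1,t-1)|<M'$.

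The only delicate step is $t'<t$: the interval really does contain several multiples of $5^t$, and it is precisely the parity-and-mod-$4$ bookkeeping that rules out each of them, so one has to keep careful track of which endpoint is divisible by $5^t$ and which is not. The remaining two steps are routine manipulations with the recursions for $r$ and $r'$ from Proposition \ref{rprop}.
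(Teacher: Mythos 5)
Your proof is correct and follows the same route as the paper's, which merely asserts that the three conclusions follow ``quickly''/``immediately'' from the stated inequalities; you have supplied the elementary divisibility and mod-$4$ bookkeeping (using the formulas $|M(a,b)|=5^b(4a+5)+1$, $|T(a,b)|=5^b(4a+1)+1$, $M'=|M|-4r'(t'-1)$ and the bound $r'(t'-1)<5^{t'}$ from (\ref{r5})) that the authors leave implicit. All three steps check out.
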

\begin{proof} The given inequalities quickly force $t'<t$. The inequality $|T(\ell,t)|<|T(\ell',t')|$ follows immediately. Finally, the given inequalities prevent $M'\le |M(5\ell+1,t-1)|$. \end{proof}

To prove the differentials, we use induction on our ordering of the $M$'s.  If the differentials are not as posed, consider the smallest $|M|$ such that $T(\ell,t)\to M$ with $M\ne M(\ell,t)$.

We cannot have $|M|>|M(\ell,t)|$, because $|M(\ell,t)|$ would contradict the minimality of $|M|$. We cannot have $|M|\le |M(5\ell+1,t-1)|$ by constraint C2.

If $|M(5\ell+1,t-1)|<|M|<|M(\ell,t)|$, by constraint C3 and the lemma, we must have $M_3$ with $|M(5\ell+1,t-1)|<M'\le |M_3|<|M|$. Because $|M_3|<|M|$, we know $T_3\to M_3$ by induction. From the lemma, we get $|T(\ell,t)|<|T_3|$, but that contradicts constraint C3.

We must have $T(\ell,t)\to M(\ell,t)$, and $M(5\ell+1,t-1)$ is eligible for our $M_3$.
 This completes most of the proof of (\ref{3n}) and hence of Theorem \ref{diffl}.

\bigskip
Underlying the above analysis has been an assumption that the $M$-classes are always hit by $T$-classes. We show now that it could not have occurred that an $M$-class supported a differential. Assume that $M=y_1^{ip^{t-1}}z_t$ is the $M$-class of lowest grading which supports a differential. We now revert to letting $|x|$ denote the actual grading of a class $x$, not divided by 2.

In $k(1)^*(K_2)$, $M$ supports a $v$-tower of $v$-height $r'(t-1)$ by \ref{DRWthm}. We will show at the end of the proof that there is a number $\Delta\le t$ such that $v^iM$ maps nontrivially to $ku^{*+1}(K_2)$ if and only if $i\le r'(t-1)-\Delta$. (Usually $\Delta=1$.) The image of $M$ in $ku^{|M|+1}(K_2)$ is a class $C$ of positive filtration such that $v^{r'(t-1)-\Delta}C\ne0$ and $v^{r'(t-1)-\Delta+1}C=0\in ku^*(K_2)$, so there must be a differential in the ASS of $ku^*(K_2)$ from a filtration-0 class hitting a class of filtration $\ge r'(t-1)-\Delta+2$ in grading $|M|+1-2(p-1)(r'(t-1)-\Delta+1)$. (The reason that the differential must start from filtration 0 is that in even gradings, $E_2$ consists entirely of $v$-towers starting in filtration 0.) This differential cannot come from another such $M$ because of our lowest-grading assumption. It cannot come from a product of one or more $z$'s times one of these $M$'s because $z$'s are infinite cycles. We must rule out the possibility that this differential is one of type (\ref{2}). They are distinguished by having the smallest $z$-subscript at least 2 greater than the $p$-exponent of the exponent of $y_1$.

The differential to $C$ has subscript $\ge r'(t-1)-\Delta+2$, and so the class in (\ref{2}) would be $y_1^{\ell p^{r'(t-1)-\Delta}}Z$ for some positive integer $\ell$, where $Z$ is a product of $z_j$'s with $j\ge r'(t-1)-\Delta+2$, and each $j$ appears at most $p-1$ times, except that the smallest $j$ might appear $p$ times. Equating this grading with $|M|-2(p-1)(r'(t-1)-\Delta+1)$, and cancelling a common factor 2 from all terms yields
\begin{equation}\ell p^{r'(t-1)-\Delta+1} + \sum_j (p^{j+1}+1) =ip^t+p^{t+1}+1-(p-1)(r'(t-1)-\Delta+1).\label{ppt}\end{equation}
Using (\ref{r2}) and (\ref{r4}) and $\Delta\le t$, the right hand side of (\ref{ppt}) equals $p^t(i+1)+(p-1)(r(t-1)+\Delta-1)+1\equiv (p-1)(r(t-1)+\Delta-1)+1$ mod $p^t$, with $(p-1)(r(t-1)+\Delta-1)+1\le p^t$ (strict if $t>2$). Since $r'(t-1)-\Delta>t$, this implies that the $\ds\sum_j$ on the left hand side of (\ref{ppt}) must contain at least $(p-1)(r(t-1)+\Delta-1)+1$ summands. We obtain
\begin{eqnarray*}&&\sum p^{j}\ge p\cdot p^{r'(t-1)-\Delta+2}+(p-1)(p^{r'(t-1)-\Delta+3}+\cdots+ p^{r'(t-1)+r(t-1)})\\
&=&p^{r'(t-1)+r(t-1)+1}=p^{p^t+1},\end{eqnarray*}
so $\sum p^{j+1}\ge p^{p^t+2}$, and hence $p^t(i+1)>p^{p^t+2}$. Thus $i\ge p^{p^t-t+2}>p^{p^t-2t}$.

Since $d_{p^t-t+1}(y_1^{p^{p^t-t-1}})$ is defined, \begin{equation}\label{pp2t}d_r(y_1^{p^{p^t-t-1}})=0\text{ for }r\le p^t-t,\end{equation}
and by the lowest-grading assumption, $ d_{p^t-t}(h_0^{t-1}vqy_1^{(i-p^{p^t-2t}+1)p^{t-1}-1})=v^{p^t}y_1^{(i-p^{p^t-2t})p^{t-1}}z_t$ and $y_1^{(i-p^{p^t-2t})p^{t-1}}z_t$ is a permanent cycle. Since
$$y_1^{ip^{t-1}}z_t=y_1^{(i-p^{p^t-2t})p^{t-1}}z_t\cdot y_1^{p^{p^t-t-1}},$$
we deduce that $y_1^{ip^{t-1}}z_t$ survives to $E_{p^t-t}$ and (\ref{3n}),
 using the derivation property of differentials.

Now we consider the need for $\Delta$ in the above argument. The worry is that maybe part of the $v$-tower on $M$ in $k(1)^*(K_2)$ might be in the image from $ku^*(K_2)$, due to a filtration jump from a lower tower, as sketched in Figure \ref{unw}, so that only a smaller part of the $M$-tower in $k(1)^*(K_2)$ maps to $ku^{*+1}(K_2)$.

\bigskip

\begin{minipage}{6in}
\begin{fig}\label{unw}

{\bf An unwanted possibility}

\begin{center}

\begin{\tz}[scale=.15]
\draw (-3,0) -- (25,0);
\draw (0,0) -- (15,15);
\draw (10,0) -- (25,15);
\draw (32,0) -- (59,0);
\draw (35,0) -- (55,20);
\draw (45,0) -- (49,4);
\draw (68,0) -- (84,0);
\draw (70,3) -- (84,17);
\draw [->] (0,0) -- (-2,5);
\draw [->] (5,5) -- (3,10);
\draw [->] (10,10) -- (8,15);
\node at (15,5) {\lb};
\node at (13,5){$c$};
\node at (20,10) {\lb};
\node at (18,10) {$c'$};
\node at (50,15) {\lb};
\node at (50,13) {$c$};
\node at (55,20) {\lb};
\node at (53,20) {$c'$};
\draw [->] (58,0) -- (56,21);
\draw [->] (52,0) -- (50,5);
\draw [->] [color=red] (13,2) -- (46,2);
\draw [->] [color=red] (18,7) -- (51,17);
\draw [->] [color=red] (43,7) -- (76,10);
\node at (0,-1.6) {$|M_1|$};
\node at (10,-1.6) {$|M_2|$};
\node at (35,-1.6) {$|M_1|$};
\node at (45,-1.6) {$|M_2|$};
\node at (69.5,-1.6) {$|M_1|+1$};
\node at (12.5,-5) {$ku^*(K_2)$};
\node at (47,-5) {$k(1)^*(K_2)$};
\node at (75,-5) {$ku^{*+1}(K_2)$};

\end{\tz}
\end{center}
\end{fig}
\end{minipage}

\bigskip

The monomials $M_\eps=y_{t_\eps}^{i_\eps}z_{t_\eps}$ ($\eps=1,2$) have $|M_\eps|=2(p^{t_\eps}(i_\eps+p)+1)$ and are truncated in $k(1)^*(K_2)$ in grading $M_\eps'=|M_\eps|-2(p-1)r'(t_\eps-1)$. In $ku^*(K_2)$, $M_2$ is truncated in grading $|T_2|=|v^{p^{t_2}}M_2|=2(p^{t_2}(i_2+1)+1)$. In Figure \ref{unw}, elements $c$ are in grading $M_2'$, and $c'$ is in grading $M_1'+2(p-1)$. The necessary condition for nontrivial image in $k(1)^*(K_2)$ (and hence $\Delta>1$) is
\begin{equation}\label{TM}|T_2|+2(p-1)\le M_1'+2(p-1)\le M_2'.\end{equation} If this occurs, then we might  have $\Delta$ as large as $\dfrac{M_2'-M_1'}{2(p-1)}+1$.
We now show in Lemma \ref{hell} that if (\ref{TM}) holds, then $(M_2'-M_1')/(2(p-1))<t$, establishing the claim made earlier about $\Delta\le t$.

 We restrict to $p=5$, $i=4\ell$ for simplicity, and so that the reader can refer to Table \ref{tabl} as an aid. The argument easily generalizes to any prime and any congruence. We divide everything by 2 as was done above, and also subtract off the $+1$ which occurs in formulas for $|M|$ and $|T|$, so the numbers will be 1 smaller than those in the table.
\begin{lem}\label{hell} If $t_1>t_2$ and
$$5^{t_2}(4\ell_2+1)+4\le 5^{t_1}(4\ell_1+5)-4r'(t_1-1)+4\le 5^{t_2}(4\ell_2+5)-4r'(t_2-1),$$
then
$$\tfrac14\bigl(5^{t_2}(4\ell_2+5)-4r'(t_2-1)-(5^{t_1}(4\ell_1+5)-4r'(t_1-1))\bigr)<t_1-1.$$\end{lem}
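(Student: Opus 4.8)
The inequality is a statement purely about the integer sequences $r(j)$, $r'(j)$ and the size relations among the quantities $|M_\varepsilon|$, $|T_2|$, so the proof is a chain of elementary estimates; I would carry it out by first translating the hypothesis into a clean bound on the ``gap'' $t_1-t_2$, and then bounding the quantity $\tfrac14(M_2'-M_1')$ from above in terms of that gap. Write $d=t_1-t_2\ge 1$. The left inequality $5^{t_2}(4\ell_2+1)+4\le 5^{t_1}(4\ell_1+5)-4r'(t_1-1)+4$ and the right inequality $5^{t_1}(4\ell_1+5)-4r'(t_1-1)+4\le 5^{t_2}(4\ell_2+5)-4r'(t_2-1)$ together sandwich the common middle term, call it $Q:=5^{t_1}(4\ell_1+5)-4r'(t_1-1)+4$, between $M_1'+4$ (essentially the bottom of the truncated $M_1$-tower in $k(1)^*$, which is $5^{t_2}(4\ell_2+1)$ since $T_2$ truncates at $5^{t_2}(i_2+1)$ with $i_2=4\ell_2$) and $M_2'=5^{t_2}(4\ell_2+5)-4r'(t_2-1)$. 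Subtracting, the target quantity is
$$\tfrac14\bigl(M_2'-(Q-4)\bigr)=\tfrac14\bigl(5^{t_2}(4\ell_2+5)-4r'(t_2-1)-5^{t_1}(4\ell_1+5)+4r'(t_1-1)-4\bigr),$$
and the goal is to show this is $<t_1-1=t_2+d-1$.

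**Key steps.** First I would use the lower bound: since $Q-4\ge M_1'=5^{t_2}(4\ell_2+1)$, i.e. $5^{t_1}(4\ell_1+5)-4r'(t_1-1)\ge 5^{t_2}(4\ell_2+1)$, I can replace the unwieldy $5^{t_1}(4\ell_1+5)$ term: in $M_2'-(Q-4)$ the combination $-5^{t_1}(4\ell_1+5)+4r'(t_1-1)$ equals $-(Q-4)$, which is $\le -M_1' = -5^{t_2}(4\ell_2+1)$. Hence
$$\tfrac14\bigl(M_2'-(Q-4)\bigr)\le \tfrac14\bigl(5^{t_2}(4\ell_2+5)-4r'(t_2-1)-5^{t_2}(4\ell_2+1)\bigr)=\tfrac14\bigl(4\cdot 5^{t_2}-4r'(t_2-1)\bigr)=5^{t_2}-r'(t_2-1).$$
Now invoke property (\ref{r2}), $r(j)+r'(j)=p^{j+1}$, with $j=t_2-1$ and $p=5$: this gives $r'(t_2-1)=5^{t_2}-r(t_2-1)$, so $5^{t_2}-r'(t_2-1)=r(t_2-1)$. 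So it suffices to show $r(t_2-1)<t_2+d-1$, i.e. $r(t_2-1)\le t_2+d-2$. But $d\ge 1$ only gives $r(t_2-1)\le t_2-1$, which is false for $t_2\ge 2$ (e.g. $r(1)=5$). So the crude bound $d\ge1$ is not enough — I must extract a better lower bound on $d$ from the hypothesis, which is exactly where the hard part lies.

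**The main obstacle.** The real content is that the sandwiching forces $t_1$ substantially larger than $t_2$: the interval $[M_1'+4,\,M_2']$ in which $Q-4$ must land has length $M_2'-M_1'-4 = 4\cdot 5^{t_2}-4r'(t_2-1)-4 = 4(r(t_2-1)-1)$, which is tiny compared with the spacing $5^{t_1}$ of the lattice $\{5^{t_1}(4\ell_1+5)-4r'(t_1-1)+4 : \ell_1\ge 0\}$ once $t_1$ is large — but $t_1$ can be small too, so the precise claim I need is a \emph{lower} bound $d\ge f(t_2,r(t_2-1))$ derived from the arithmetic compatibility, or more directly, I should bound $r(t_2-1)$ using (\ref{r4}) (which gives $(p-1)(r(t_2-2)+t_2-2)<p^{t_2-1}$) together with the recursion (\ref{rrec}) to see that $r(t_2-1)$ grows roughly like $p^{t_2-2}$, which is $\ge t_2+d-1$ would then \emph{fail} — indicating I have the inequality direction backwards and the honest route is: show directly from the hypothesis that $5^{t_1}(4\ell_1+5)-4r'(t_1-1)+4$ exceeds $M_2'$ unless $\ell_1=0$, reducing to a one-parameter comparison, and then use (\ref{r5}), $p^{j+1}-p^j\le r'(j)<p^{j+1}-p^{j-1}$, to bound $r'(t_1-1)$ from below so tightly that the left inequality of the hypothesis forces $t_1-t_2$ to be so large that $\tfrac14(M_2'-M_1')\le r(t_2-1)$ is comfortably below $t_1-1$. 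I expect the cleanest presentation is: (1) reduce to $\ell_1=0$ by a size argument; (2) with $\ell_1=0$, the left hypothesis becomes $5^{t_2}(4\ell_2+1)\le 5^{t_1}\cdot 5-4r'(t_1-1)$, and using $r'(t_1-1)<5^{t_1}-5^{t_1-2}$ (from (\ref{r5})) this gives $5^{t_2}(4\ell_2+1)< 5^{t_1}+4\cdot 5^{t_1-2}$, forcing a lower bound on $t_1-t_2$ in terms of $\ell_2$; (3) combine with the already-derived $\tfrac14(M_2'-M_1')\le r(t_2-1)$ and the bound $r(t_2-1)<5^{t_2-2}$ (induction from (\ref{rrec}), or directly (\ref{r4})) to conclude $r(t_2-1)<t_1-1$. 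Step (2)–(3), pinning down the exponential gap against the linear target $t_1-1$, is the crux; once the right lower bound on $t_1$ is in hand the conclusion is immediate.
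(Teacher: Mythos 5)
Your first reduction is correct and sharp: from the left‑hand hypothesis $5^{t_1}(4\ell_1+5)-4r'(t_1-1)\ge 5^{t_2}(4\ell_2+1)$ and (\ref{r2}) you get
$$\tfrac14\bigl(5^{t_2}(4\ell_2+5)-4r'(t_2-1)-(5^{t_1}(4\ell_1+5)-4r'(t_1-1))\bigr)\le 5^{t_2}-r'(t_2-1)=r(t_2-1),$$
so it would suffice to prove $r(t_2-1)<t_1-1$, and you correctly locate the crux there. But the route you sketch for that step fails on two concrete points. First, the bound $r(t_2-1)<5^{t_2-2}$ in your step (3) is false, and in the wrong direction: by (\ref{r5}), $r'(t_2-1)<5^{t_2}-5^{t_2-2}$, hence $r(t_2-1)=5^{t_2}-r'(t_2-1)>5^{t_2-2}$ (the true range is $5^{t_2-2}<r(t_2-1)\le 5^{t_2-1}$; (\ref{r4}) controls $r(t_2-2)$, not $r(t_2-1)$). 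So the target is genuinely exponential: you must force $t_1>r(t_2-1)+1\approx 5^{t_2}/6$. Second, your step (2) cannot produce such a bound: the left inequality alone, with $\ell_1=0$, only ties $\ell_2$ to $5^{t_1-t_2}$, and since $\ell_2$ is a free parameter this yields no absolute lower bound on $t_1$ in terms of $t_2$.

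The idea that is missing is that the two inequalities \emph{together} are a congruence condition: they require $5^{t_1+1}-4r'(t_1-1)$ to land, modulo $4\cdot 5^{t_2}$, in a window of length $4r(t_2-1)-4<4\cdot 5^{t_2-1}$ with left endpoint $\equiv 5^{t_2}+4$. The paper exploits this using the closed form $5^{t+1}-4r'(t-1)=5^{2k+\delta}+16\cdot 5^{\delta}Q(k)+4k+4\cdot 5^{\delta-1}$ (with $t=2k+\delta$, $Q(k)=(5^{2k}-1)/24$) from \cite{DRW}: after reduction mod $4\cdot 5^{t_2}$ the only adjustable term is $4k_1$, and making the residue fall in the window forces $k_1$ to be of order $Q(k_2)$, whence $t_1\approx 2r(t_2-1)$ (or $\approx 10\,r(t_2-1)$, depending on the parities $\delta_1,\delta_2$), while the quantity to be bounded is computed to be about $\tfrac12 t_1+k_2$ or $\tfrac1{10}t_1+k_2$. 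The margin is only a factor of $2$, so no argument that discards the exact arithmetic of $r'$ and the mod-$4\cdot 5^{t_2}$ position of the window can close the gap; your proposal, as it stands, is incomplete at precisely the step you flagged as the main obstacle.
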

\begin{proof} If there is a counterexample to this, then there is one with $\ell_1=0$, since $\ell_2$ could be decreased by $5^{t_1-t_2}\ell_1$, so it suffices to use $\ell_1=0$. Let $Q(k)=(5^{2k}-1)/24$ (called $q(k)$ in \cite[Lemma 5.3]{DRW}). Then, using \cite[Lemma 5.5]{DRW}, for $t=2k+\delta$ with $\delta=1$ or 2,
$$5^{t+1}-4r'(t-1)=5^{2k+\delta}+16\cdot 5^\delta Q(k)+4k+4\cdot5^{\delta-1}.$$
Since $16\cdot 5^\delta Q(k)+4k+4\cdot5^{\delta-1}<3\cdot 5^{2k+\delta}$, the hypothesis of the lemma says that $5^{t_1+1}-4r'(t_1-1)$ mod $4\cdot 5^{t_2}$ lies in the mod-$(4\cdot 5^{t_2})$ interval $[5^{t_2},5^{t_2+1}-4r'(t_2-1)-4]$.

Let $t_1=2k_1+\delta_1$ and $t_2=2k_2+\delta_2$. The condition is restated as
\begin{equation}\label{x1}5^{2k_1+\delta_1}+16\cdot 5^{\delta_1}Q(k_1)+4k_1+4\cdot 5^{\delta_1-1}\end{equation}
lies in the mod-$(4\cdot5^{t_2})$ interval
\begin{equation}\label{I}[5^{t_2},5^{t_2}+16\cdot5^{\delta_2}Q(k_2)+4k_2+4\cdot5^{\delta_2-1}-4].\end{equation}

Let $\delta_2=1$. The reduction mod $4\cdot 5^{t_2}$  of (\ref{x1}) is
\begin{equation}\label{x2}5^{t_2}+16\cdot 5^{\delta_1}Q(k_2)+4k_1+4\cdot5^{\delta_1-1}.\end{equation}
Let $\delta_1=2$. Then $5^{t_2}+16\cdot5^{\delta_1}Q(k_2)>4\cdot 5^{t_2}$ and equals $5^{2k_2+2}-(2000Q(k_2-1)+100)$, so (\ref{x2}) will  first be in the interval (\ref{I}) when $4k_1+20=2000Q(k_2-1)+100$, hence $k_1=500Q(k_2-1)+20$, so $t_1=1000Q(k_2-1)+42$. The left hand side of the conclusion of the lemma is $\frac18(M_2'-M_1')$ with $M_1'$ and $M_2'$ as in (\ref{TM}). For $k_1=500Q(k_2-1)+20$, the value of $M_1'$ is at the left end of the interval (\ref{I}), and so $\frac18(M_2'-M_1')$ equals $\frac14$ times the length plus 4 of (\ref{I}), which is
$$20Q(k_2)+k_2+1=500Q(k_2-1)+k_2+21=\tfrac12t_1+k_2.$$
Since $k_2<<t_1$, this is less than $t_1-1$, as claimed.
If $k_1$ is increased from the value $500Q(k_2-1)+20$, the value of $t_1$ increases, while $M_2'-M_1'$ decreases, since $M_1'$ is moving through the interval, so the inequality asserted in the lemma is satisfied more strongly.

Now, with $\delta_2=1$ continuing, let $\delta_1=1$. Since $k_1>k_2$, (\ref{x2}) lies outside the interval (\ref{I}) until $80Q(k_2)+4k_1+4=4\cdot 5^{t_2}$, so
$$k_1=5^{2k_2+1}-20Q(k_2)-1=100Q(k_2)+4$$
and $t_1=200Q(k_2)+9$. Again $\frac18(M_2'-M_1')=20Q(k_2)+k_2+1\approx\frac1{10}t_2+k_2$, so the conclusion of the lemma is satisfied more strongly.

A similar analysis works when $\delta_2=2$. In this case $\frac18(M_2'-M_1')\approx\frac12t_1+k_2$ if $\delta_1=1$, and $\frac18(M_2'-M_1')\approx\frac1{10}t_1+k_2$ if $\delta_1=2$.\end{proof}

We close this section by explaining how Theorems \ref{E2} and \ref{diffl} lead to the descriptions of $ku^*(K_2)$ given in Theorems \ref{evthm} and \ref{oddthm}, modulo exotic extensions. We begin with the portion in even gradings and restrict our attention to odd $p$. All elements in the $P[h_0,v,y_1]$ part of Theorem \ref{E2} support differentials of type  (\ref{1}). Note that $y_0^{p^k-1}=y_0^{p-1}y_1^{p^{k-1}-1}=\prod_{j=0}^{k-1}y_j^{p-1}$. The first is easiest to write, the second occurs in Theorem \ref{E2}, and the third in \ref{ABdef} and Figure \ref{oddchart}. From \ref{ABdef}, $y_0^{p^k-1}z_0$ is in $A_k$ for $k\ge1$, the bottom right element in Figure \ref{oddchart}. Then
\begin{equation}\label{Pyy}P[y_1]y_0^{p-1}z_0=\bigoplus \M_k^A\cdot y_0^{p^k-1}z_0\subset \bigoplus \M_k^AA_k.\end{equation}
The first part occurs in Theorem \ref{E2} and the last part in Theorem \ref{evthm}.

Now we consider $P[y_1]\ot\bigoplus_{j\ge1} W_j\ot TP_{p-1}[z_j]\ot\L_{j+1}$ in Theorem \ref{E2}. The $\bigoplus$ part is all monomials $z_\ell M$ with $\ell\ge1$ and $M\in\L_\ell$. From Theorem \ref{diffl}, $y_1^iz_\ell M$ supports a differential (\ref{2}) if  $\ell\ge\nu(i)+2$, while those with $\nu(i)\ge\ell-1$ are hit by differentials (\ref{3}) and (\ref{4}), yielding $v$-towers with heights as given in \ref{ABdef}. These are all monomials in $\bigoplus_{\ell\ge1}P[y_\ell,y_{\ell+1},\ldots]z_\ell\L_\ell$. From \ref{ABdef} or (\ref{Bk}), the generators of the $v$-towers in $B_k$ are all
$$z_j\prod_{i=j}^{k-1}\{z_i^{p-1},y_i^{p-1}\},\ 1\le j\le k.$$
Let $(z_\ell M)_i$ be the $y_i^ez_i^{e'}$ factors of $ M$. Then $\M_kB_k$ consists of all monomials $z_\ell M$ such that $(z_\ell M)_i$ equals $y_i^{p-1}$ or $z_i^{p-1}$ for $\ell\le i<k$, but not for $ i=k$, and so every monomial $z_\ell M$ is in a unique $\M_kB_k$. From Theorem \ref{diffl}, $z_\ell M$ has $v$-height $p^\ell$ if and only if $M$ contains no $z$-factors, which explains the split into $\M_k^A$ and $\M_k^B$ in Theorem \ref{evthm}.

Now we address the odd gradings. The $P[h_0,v,y_1]vq$ part of Theorem \ref{E2} is totally removed either as sources (\ref{3}) or targets (\ref{1}) of differentials. See grading 17 in Figure \ref{low} for a nice illustration.
The $qy_1^{i-1}S_{\nu(i)+1,\ell}$ part of Theorem \ref{oddthm} is formed from $TP_{\nu(i)+2}[v]qy_1^{i-1}W_\ell$ in \ref{E2} using (\ref{2}). The generators of $S_{\nu(i)+1,\ell}$ are $z_{1,\ell},\ldots,z_{\ell-\nu(i)-1,\ell}$, but to see the differential from (\ref{2}), one should write $z_{t,\ell}=z_{t,t+\nu(i)+1}Z_{t+\nu(i)+1}^\ell$, where
\begin{equation}\label{Zdef}Z_i^j=(z_i\cdots z_{j-1})^{p-1}\text{ for }j>i, \text{ with }Z_i^i=1.\end{equation}
The remaining generators of $qy_1^{i-1}W_\ell$, namely $qy_1^{i-1}z_{j,\ell}$ with $ \ell-\nu(i)\le j\le\ell$, support differentials (\ref{4}). There can be no unexpected exotic extensions among these
summands for the reason noted at the end of Section \ref{intro}. The $\ker(p)$ elements in the $S$ summands play a very important role in the exact sequence.

To see that there are no unexpected exotic extensions between these summands

\section{Exotic extensions}\label{extnsec}
In this section, we prove the following expansion of (\ref{extns}).
\begin{thm}\label{extnthm} If  $i\ge0$ and $k\ge k_0$,
$$py_{k}^iy_{k-1}^{p-1}z_{k-1}=v^{p^{k-1}(p-1)}y_{k}^iz_{k}$$
with an additional term $vy_{k}^iy_{k-1}^{p-1}z_{k-2}^p$ if $k\ge k_0+2$.\end{thm}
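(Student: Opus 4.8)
My plan is to establish the identity directly in the ring $ku^{*}(K_2)$ by means of the long exact sequence (\ref{LES}) and the description of $k(1)^{*}(K_2)$ in Theorem \ref{DRWthm}, carrying the exponent $i$ along as an inert parameter. Put $\alpha=y_k^{i}y_{k-1}^{p-1}z_{k-1}$, $T_1=v^{p^{k-1}(p-1)}y_k^{i}z_k$, and $T_2=v\,y_k^{i}y_{k-1}^{p-1}z_{k-2}^{\,p}$. By Theorems \ref{E2} and \ref{diffl}, each of $\alpha$, $y_k^{i}z_k$, and $y_k^{i}y_{k-1}^{p-1}z_{k-2}^{p}$ is a permanent cycle detected in Adams filtration $0$ — one of the $v$-tower generators of some $M\cdot A_{k'}$ or $M\cdot B_{k'}$ of Theorem \ref{evthm} — and a one-line degree count with $|v|=-2(p-1)$, $|y_j|=2p^{j}$, $|z_j|=2(p^{j+1}+1)$ shows $|\alpha|=|T_1|=|T_2|=d$ for $d=2ip^{k}+4p^{k}-2p^{k-1}+2$, so the asserted relation is at least degree-homogeneous.

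First I would show $p\alpha\ne0$. The reduction map $\rho\colon ku^{*}(K_2)\to k(1)^{*}(K_2)$ of (\ref{LES}) sends $\alpha$ to $y_{k-1}^{\,ip+p-1}z_{k-1}$, a nonzero element of the summand $TP_{r'(k-2)}[v]\ot P[y_{k-1}]\ot E[w_{k-1}]\ot\overline{TP}_p[z_{k-1}]\ot\L_k$ of Theorem \ref{DRWthm}. Hence $\alpha\notin\ker(\cdot p)=\im\!\bigl(k(1)^{d-1}(K_2)\to ku^{d}(K_2)\bigr)$, so $p\alpha\ne0$. Since $p=h_0$ raises filtration, $p\alpha$ lies in positive filtration in degree $d$; but the $E_\infty$-page of Theorems \ref{E2}--\ref{diffl} shows that, modulo the trivial module, the positive-filtration part of $ku^{d}(K_2)$ is spanned by $T_1$, by $vT_1=v^{p^{k-1}(p-1)+1}y_k^{i}z_{k-1}^{p}$ (which is $pT_1$, by the companion relation $pz_k=vz_{k-1}^{p}$ of (\ref{extns})), and — only when $k\ge k_0+2$ — by $T_2$; one checks that no other $v$-tower generator has its tower passing through $(d,\ge1)$. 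Therefore $p\alpha=\lambda_1T_1+\lambda_2T_2$ with $\lambda_1,\lambda_2\in\zp$, where $\lambda_2=0$ is forced when $k<k_0+2$.

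It remains to see that $\lambda_1$ is a unit, and that for $k\ge k_0+2$ the scalar $\lambda_2$ is also a unit. Both $T_1$ and $T_2$ die in $k(1)^{*}(K_2)$: by (\ref{r2}) and (\ref{r5}) one has $r(k-1)\le p^{k-1}\le p^{k-1}(p-1)$ and $r'(k-2)<p^{k-1}\le p^{k-1}(p-1)$, so $v^{p^{k-1}(p-1)}z_k=0$ there and likewise $v\,y_{k-1}^{p-1}z_{k-2}^{p}=0$ once its truncated $v$-tower is exhausted; thus $T_1,T_2\in\im(\cdot p)$, consistent with the asserted identity. To pin the scalars I would compare $\Z_{(p)}$-orders in degree $d$: (\ref{LES}) expresses $|p\cdot ku^{d}(K_2)|$ in terms of $|ku^{d}(K_2)|$, $|k(1)^{d}(K_2)|$, and $|k(1)^{d-1}(K_2)|$, and Theorem \ref{DRWthm} makes this order large enough to exclude $\lambda_1\equiv0$ (which for $k<k_0+2$ contradicts $p\alpha\ne0$, and for $k\ge k_0+2$ is incompatible with the order count) and, when $k\ge k_0+2$, to force the second target as well, so that $\lambda_2\not\equiv0$. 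Equivalently one can run an induction on $k$, feeding the already-established cases at level $k-2$ (multiplied by $z_{k-1}^{p-1}$ and suitable powers of the $y$'s) together with $pz_k=vz_{k-1}^{p}$ to produce the correction term $T_2$. The reduction to general $i$ requires no extra work, since the source $\rho(\alpha)\ne0$ and the vanishing of $T_1,T_2$ in $k(1)^{*}(K_2)$ hold verbatim with the factor $y_k^i$ present.

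The step I expect to be the crux is the pinning of the scalars. The exact sequence by itself yields only the \emph{existence} of a multiplication-by-$p$ extension in degree $d$ and a short list of candidate targets; showing that it is the tower on $\alpha$ — and not some other filtration-$0$ generator — that is $p$-extended onto \emph{both} $T_1$ and $T_2$ with unit coefficients really needs the class-by-class accounting of how the elements of $k(1)^{*}(K_2)$ are matched under (\ref{LES}), which is organized in Sections \ref{LESsec}--\ref{allsec}. That the correction term switches on exactly at $k=k_0+2$ is to be expected: $z_{k-2}^{p}$ (together with the $w$-decorated summands, via $w_{j+2}=y_j^{p-1}w_jz_{j+1}^{p-1}$) only enters $k(1)^{*}(K_2)$ once $k-2\ge k_0$, so this is where the additional casework will concentrate.
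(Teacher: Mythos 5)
Your argument runs in the opposite direction from the one that (\ref{LES}) actually supports, and the two places where it would have to do real work both fail. First, the opening step is a non sequitur: $\rho(\alpha)\ne 0$ shows $\alpha\notin\ker\rho=\im(\cdot p)$, i.e.\ that $\alpha$ is not divisible by $p$ — not that $\alpha\notin\ker(\cdot p)=\im\bigl(k(1)^{d-1}(K_2)\to ku^{d}(K_2)\bigr)$; these are different subgroups, so $p\alpha\ne0$ is not established this way. (For $k\ge k_0+2$ one does get $p\alpha\ne0$, but only from the Ext-level relation $h_0z_{k-1}=vz_{k-2}^{p}$ of Theorem \ref{E2}, which produces exactly the ``additional term'' $T_2$ and says nothing about the exotic component.) Second, your claim that $T_1=v^{p^{k-1}(p-1)}y_k^iz_k$ dies in $k(1)^*(K_2)$ is false in general: the $v$-height of $z_k$ there is $r'(k-1)$ (second line of Theorem \ref{DRWthm}), and by (\ref{r5}) one has $r'(k-1)\ge p^{k-1}(p-1)$, with strict inequality for odd $p$ and $k\ge3$. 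So exactness only forces divisibility by $p$ from $v^{r'(k-1)}y_k^iz_k$ upward; the relation at the stated point $v^{p^{k-1}(p-1)}$ has to be recovered afterwards by pushing down the $v$-tower ($v\cdot p\xi=p\cdot v\xi$), which is precisely what the paper does.

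Even granting $p\alpha\ne0$ and your span claim, the identity $p\alpha=\lambda_1T_1+\lambda_2T_2$ with $\lambda_1$ a \emph{unit} is the entire content of the theorem, and nothing in your argument excludes $p\alpha=T_2$ alone. The proposed rescue by order-counting is circular: the class-by-class matching of $k(1)^*(K_2)$ in Sections \ref{LESsec}--\ref{allsec} presupposes the extension formulas (\ref{extns}) and is used only afterwards to show there are no \emph{further} extensions. The paper instead argues from the target: $v^{r'(k-1)}y_k^iz_k$ is nonzero in $ku^*(K_2)$ (its tower has height $p^k>r'(k-1)$) but maps to zero in $k(1)^*(K_2)$, hence lies in $\im(\cdot p)$; a grading-and-height case analysis — the constraints $M_1'\le|M_2|<|M_1|$ and $|T_2|\le|T_1|$ of Figure \ref{extnfig}, checked against all candidate monomials $y_{k_2}^{i_2}z_{k_2}$ — shows the tower on $\alpha=y_k^iy_{k-1}^{p-1}z_{k-1}$ is the \emph{only} one that can supply the division. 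That target-side uniqueness argument is the missing idea in your proposal.
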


The additional term is seen in Ext, and will be ignored in the rest of this section. We have included the factor $y_{k}^i$, which is not automatic since $y_{k}^i$ is not a permanent cycle. Since, for example, $y_{k+1}=y_k^p$, we need not consider $y_i$ for $i>k$. It is automatic that this formula can be multiplied by $z_j$'s, since they do survive the spectral sequence.

The extension is deduced from the exact sequence
$$ku^*(K_2)\mapright{\cdot p} ku^*(K_2)\longrightarrow k(1)^*(K_2)$$
and the fact that $v^{r'(k-1)}y_{k}^iz_{k}=0$ in $k(1)^*(K_2)$ with $r'(k-1)\ge p^k(p-1)$. Thus $v^{r'(k-1)}y_{k}^iz_{k}$ must be divisible by $p$ in $ku^*(K_2)$, and, as we will show, the $v$-tower on $y_{k}^iy_{k-1}^{p-1}z_{k-1}$ provides the only classes that can do the dividing. Once we know the division formula toward the end of the $v$-tower, we can deduce that it holds earlier in the tower, as well. For example, $r'(2)=p^3-p^2+p-2$, which is  the height in the top $v$-tower in Figure \ref{oddchart} where the extensions into it do not also involve an $h_0$-extension. We deduce the extensions from the earlier part of the $v$-tower on $y_2^{p-1}z_2$ by naturality.

We illustrate in Figure \ref{extnfig}, using the notation of the preceding section. Thus $T_i$ is the class satisfying $d_r(T_i)=v^rM_i$, Here the portion of the top tower to the right of $M_1'$ must be divisible by $p$. The tower providing the extension must have $M_1'\le |M_2|<|M_1|$ and $|T_2|\le |T_1|$.

\bigskip

\begin{minipage}{6in}
\begin{fig}\label{extnfig}

{\bf Conditions for extension}

\begin{center}

\begin{\tz}[scale=.25]
\draw (0,0) -- (36,9);
\draw (20,0) -- (38,4.5);
\draw (-1,0) -- (41,0);
\node at (24,6) {$\bullet$};
\node at (0,-1.2) {$M_1$};
\node at (20,-1.2) {$M_2$};
\draw [dotted] (24,1) -- (24,6);
\draw [dotted] (28,2) -- (28,7);
\draw [dotted] (32,3) -- (32,8);
\draw [dotted] (36,4) -- (36,9);
\node at (24,9) {$M_1'$};
\draw [->] (24,8) -- (24,6.5);
\draw [->] (38,10.5) -- (38,9.5);
\node at (38,11.5) {$|T_1|$};
\node at (40,-1.2) {$|T_2|$};
\end{\tz}\end{center}
\end{fig}
\end{minipage}

\bigskip
As we did for the differentials in the previous section, we will perform the argument for $p=5$. It will be clear that it generalizes to an arbitrary odd prime, and with minor modification to $p=2$. Also, we use $i=4\ell$ in Theorem \ref{extnthm}. If instead we used $i=4\ell+d$ for $1\le d\le 3$, it will just add the same amount to the quantities $|M|$, $|T|$, and $M'$ involved in the argument. We can use Table \ref{tabl} to envision the analysis, with the $t$ there replaced by $k$. For a monomial $M(\ell,k)=y_k^{4\ell}z_k$, we have, after dividing by 2, $|M|=5^k(4\ell+5)+1$, $|T|=5^k(4\ell+1)+1$, and $5^k(4\ell+1.16)+1<  M'\le 5^k(4\ell+1.8)+1$, using (\ref{r5}).
With $M_1$ and $M_2$ as in Figure \ref{extnfig}, we will show that $M_2(5\ell+1,k-1)$ is the unique monomial satisfying the  inequalities stated just before Figure \ref{extnfig} for $M_1(\ell,k)$.
Note that $M(5\ell+1,k-1)=y_k^{4\ell}y_{k-1}^4z_{k-1}$. We omit the $+1$ in all the formulas.

The inequalities are satisfied by $M_2(5\ell+1,k-1)$ since
$$5^k(4\ell+1.8)\le 5^{k-1}(4(5\ell+1)+5)<5^k(4\ell+5)\text{ and }5^{k-1}(4(5\ell+1)+1)\le 5^k(5\ell+1).$$
If $k_2\ge k$, then the first inequality, after dividing by $5^k$, becomes
$$4\ell+1.8\le 5^{k_2-k}(4\ell_2+5)<4\ell+5,$$
which cannot be satisfied since the middle term is $\equiv1$ mod 4. If $k_2<k-1$, then
$$M_1'-|T_1|>5^k\cdot.16\ge4\cdot5^{k_2}=|M_2|-|T_2|,$$
which is inconsistent with two of the inequalities. Let $k_2=k-1$. If $\ell_2<5\ell+1$, then
$$|M_2|=5^{k-1}(4\ell_2+5)\le5^{k-1}(4\cdot5\ell+5)<5^k(4\ell+1.16)<M_1',$$
contradicting one of the inequalities. If $k_2=k-1$ and $\ell_2>5\ell+1$, then
$$|T_2|\ge 5^{k-1}(4(5\ell+2)+1)>5^k(4\ell+1)=|T_1|,$$
contradicting one of the inequalities.

We deduce that $M_2=y_k^{4\ell}y_{k-1}^4z_{k-1}$, as claimed. We should perhaps have noted that the extensions could not have come from classes with more than one $z_j$-factor, because these are $z_j$ times a class on which the extensions have already been determined.

\section{Proposed formulas for the exact sequence (\ref{LES})}\label{LESsec}
In this section we propose what we feel must be the correct complete formulas for the exact sequence (\ref{LES}). Some homomorphisms are forced by naturality, but many others involve significant filtration jumps. However, they all occur in several families with nice properties. The 10-term exact sequence (\ref{10}) shows how the $S_{k,\ell}$ portions and the exotic extensions yield compatibility of the differing $v$-tower heights in $ku^*(K_2)$ and $k(1)^*(K_2)$. In Section \ref{allsec}, we show that all elements of $k(1)^*(K_2)$ are accounted for exactly once in these homomorphisms, which implies that there can be no more exotic extensions. This does not require us to prove that our homomorphism formulas are actually correct, as discussed at the end of Section \ref{intro}. We will focus on the case when $p$ is odd. We could incorporate all primes together at the expense of involving the parameter $k_0$, but things are complicated enough without that. In an earlier version of this paper (\cite{2}), a thorough analysis when $p=2$ was performed.

We propose that (\ref{LES}) can be split into exact sequences of length 4 and 10 (not including 0's at the end). There are subgroups of $k(1)^*(K_2)$ called $G_k^1$ and $G_k^2$ for $k\ge1$ and $G^i_{k,\ell}$ for $3\le i\le6$ and $1\le k<\ell$ such that there are exact sequences
\begin{equation}\label{Aseq}0\to G_k^1\to A_k\mapright{p}A_k\to G_k^2\to 0\end{equation}
for $k\ge1$, and, for $1\le k<\ell$,
\begin{eqnarray}\nonumber0&\to& G^3_{k,\ell}\to y_kB_{k}Z_k^\ell\mapright{p} y_kB_{k}Z_k^\ell\to G^4_{k,\ell}\to y_1^{p^{k-1}-1}q S_{k,\ell}\\
&\mapright{p}&y_1^{p^{k-1}-1}q S_{k,\ell}\to G^5_{k,\ell}\to B_{k}z_\ell\mapright{p}B_{k}z_\ell\to G^6_{k,\ell}\to0,\label{10}\end{eqnarray}
with $Z_k^\ell$ as defined in (\ref{Zdef}).
The  sequence (\ref{Aseq}) can be tensored with $TP_{p-1}[y_k]\ot P[y_{k+1}]$, while (\ref{10}) can be tensored with $TP_{p-1}[y_k]\ot P[y_{k+1}]\ot TP_{p-1}[z_\ell]\ot \L_{\ell+1}$.
If $p$ is odd, there are also exact sequences
\begin{equation}\label{78}0\to G^7_{k,e}\to B_kz_k^e\mapright{p} B_kz_k^e\to G^8_{k,e}\to 0\end{equation}
for $k\ge1$ and $1\le e\le p-2$. This can be tensored with $P[y_k]\ot\L_{k+1}$.

One can verify that the totality of $A_k$ and $B_k$ groups in these exact sequences agrees with that in Theorem \ref{evthm}.
We will study these exact sequences by breaking them up into short exact sequences and isomorphisms involving kernels and cokernels of $\cdot p$.

Let $K_k^A=\ker(\cdot p|A_k)$, $K_k^B=\ker(\cdot p|B_k)$, $C_k^A=\coker(\cdot p|A_k)$, and $C_k^B=\coker(\cdot p|B_k)$. There are important elements $g_k\in K_k^A$ and $K_k^B$ defined (up to unit coefficients) by $g_1=z_1$, $g_2=v^{p-2}z_2$, and, for $k\ge1$,
\begin{equation}\label{gdef}g_{k+2}=v^{r'(k)-1}z_{k+2}+g_ky_k^{p-1}z_{k+1}^{p-1}.\end{equation}
To see that this is in $\ker(\cdot p)$, we use (\ref{extns}) to see that $p\cdot v^{r'(k)-1}z_{k+2}=v^{r'(k)}z_{k+1}^p$, and that the $v^{r'(k-2)-1}z_k$ term in $g_k$ yields $v^{r'(k-2)-1}v^{p^k(p-1)}z_{k+1}z_{k+1}^{p-1}$ in $p\cdot g_ky_k^{p-1}z_{k+1}^{p-1}$. Using  (\ref{rprec}), these terms cancel. Other terms in $p\cdot g_ky_k^{p-1}z_{k+1}^{p-1}$ yield 0 since $g_k\in\ker(\cdot p)$.

The $v$-towers in $K_k^A$ are generated by \begin{equation}\label{gns}g_k\text{ and }g_jz_j^{p-1}\prod_{i=j+1}^{k-1}\{z_i^{p-1},y_i^{p-1}\},\ 1\le j\le k-1.\end{equation} For example, using Figure \ref{oddchart} when $k=3$, these are
 $g_3=v^{p^2-p-1}z_3+y_1^{p-1}z_1z_2^{p-1}$, $g_2z_2^{p-1}=v^{p-2}z_2^p$, $g_1z_1^{p-1}z_2^{p-1}$, and $g_1z_1^{p-1}y_2^{p-1}$.
 The $v$-heights are $p^k-(r'(k-2)-1)$ for $g_k$, and $p^j-j-(r'(j-2)-1)$ for the others, since they are determined by $v$-heights of $z_j$ in $B_k$. The map $G_k^1\to K_k^A$ sends $w_k$ to $g_k$ and
 \begin{equation}w_jP\mapsto g_jP\text{ for }P=z_j^{p-1}\prod_{i=j+1}^{k-1}\{z_i^{p-1},y_i^{p-1}\}\label{wg},\end{equation}
 with $w_j$ as in \ref{rprop} and \ref{DRWthm}. The $v$-height of $w_j$ is $r(j)$ if it is not accompanied by $z_j$, and $r'(j-1)$ if it is. By (\ref{r3}) and ((\ref{r2}) and (\ref{r1})) the $v$-heights agree, so (\ref{wg}) is an isomorphism on $v$-towers.

 For $L=K_k^A$ or $K_k^B$ or $C_k^A$ or $C_k^B$, we say that a $\zp$ in $L$ is a class of $v$-height 1 in $L$ which is not part of a larger $v$-tower in $L$. There is one $\zp$ in $K_3^A$, as can be seen in Figure \ref{oddchart}. This is the element $v^{p-2}y_1^{p-1}z_1z_2^{p-1}$. Note that for $i<p-1$, $v^iy_1^{p-1}z_1z_2^{p-1}+v^{i+p^2-p-1}z_3$ is part of a $v$-tower in $K_3^A$, which continues with the elements $v^{i}z_3$ for  $i>p^2-3$, but
 $v^iy_1^{p-1}z_1z_2^{p-1}$ itself is in $K_3^A$ only for $i=p-2$. Using \ref{ABdef}, we find that the $\zp$'s in $K_k^A$ are
 \begin{equation}\label{KkA}v^{p^t-t-1}(y_t\cdots y_{j-1})^{p-1}z_tz_j^{p-1}\prod_{i=j+1}^{k-1}\{z_i^{p-1},y_i^{p-1}\}\text{ for }1\le t<j<k.\end{equation}
 For example, the elements $v^{p-2}(y_1y_2)^{p-1}z_1$ and $v^{p^2-3}y_2^{p-1}z_2$ in Figure \ref{oddchart} yield elements in $K_4^A$ after being multiplied by $z_3^{p-1}$. The basic formula for the homomorphism from part of $k(1)^*(K_2)$ to $\zp$'s in various $K_k^A$ and $K_k^B$, possibly tensored with other classes as in Theorem \ref{evthm}, is
 \begin{equation}\label{KK}\bigl(q(y_1\cdots y_t)^{p-1}z_{j-t,j}\mapsto v^{p^t-t-1}y_t^{p-1}z_tz_j\bigr)\ot P[y_j]\ot TP_{p-1}[z_j]\ot \L_{j+1}\text{ for }j>t\ge1.\end{equation}
 The domain elements are in the second half of the third line of Theorem \ref{DRWthm}. The ones that are in $G^1_k$ in the isomorphism $G^1_k\to K_k^A$ can be extracted using (\ref{KkA}).

 The isomorphism $G^3_{k,\ell}\to y_kK_k^B Z_k^\ell$ in (\ref{10}) is given using formulas analogous to (\ref{wg}) and (\ref{KK}). There are several minor differences. One is that the $v$-tower on $y_kg_kZ_k^\ell$ is truncated due to $v^{p^k-k}z_k=0$ in $B_k$ (as opposed to $v^{p^k}z_k=0$ in $A_k$). This is compatible with the fact that the $v$-height of $w_kz_k$ in $k(1)^*(K_2)$ is $k$ less than that of $w_k$, using Theorem \ref{DRWthm} and (\ref{r1}). The other is that $K_k^B$ has additional $\zp$'s \begin{equation}\label{KkB}v^{p^t-t-1}(y_t\cdots y_{k-1})^{p-1}z_t\text{ for }1\le t\le k-1,\end{equation} as seen in Figure  \ref{oddchart} when $k=3$, but these are always multiplied by higher $z$'s, and so (\ref{KK}) applies.

The isomorphisms $C_k^A\to G_k^2$ and $C_k^Bz_\ell\to G_{k,\ell}^6$ are defined simply by sending an element to one with the same name. Moreover $C_k^A=C_k^B$ except for $(y_0\cdots y_{k-1})^{p-1}z_0\in C_k^A-C_k^B$. When $k=3$, we see that the $\zp$'s in $C_k^B$ are $\{z_1^pz_2^{p-1},\,z_2^p,\,y_2^{p-1}z_1^p\}$ in Figure \ref{oddchart}.\footnote{The class $y_2^{p-1}z_1^{p-1}$ should really be called $y_2^{p-1}z_1^{p-1}+v^{p^2(p-1)-1}z_3$ so that $v$ times it is divisible by $p$, hence 0 in $C_k^B$, but we will ignore this fine-tuning.} For future reference,
\begin{equation}\label{Ck}\text{$\zp$'s in }C_k^B\text{ are} \bigl\{z_t^p\prod_{i=t+1}^{k-1}\{z_i^{p-1},y_i^{p-1}\bigr\}:\, 1\le t<k\}.\end{equation}
The corresponding elements in $k(1)^*(K_2)$ are from the third line of \ref{DRWthm}.

The $v$-towers in $C_k^A=C_k^B$ are generated by
\begin{equation}\label{Ctow}z_k\text{ and }y_t^{p-1}z_t\prod_{i=t+1}^{k-1}\{z_i^{p-1},y_i^{p-1}\},\ 1\le t<k.\end{equation}
We will show that the $v$-height of $z_k$ in $C_k^B$ is $r'(k-1)$, which equals its $v$-height in $k(1)^*(K_2)$. It follows from \ref{ABdef} that the $v$-height of $y_t^{p-1}z_t\prod_{i=t+1}^{k-1}\{z_i^{p-1},y_i^{p-1}\}$ equals $r'(t-1)$, establishing the isomorphisms out of $C_k^A$ and $C_k^Bz_\ell$. In Figure \ref{oddchart}, the $v$-height of $z_3$ equalling $p^3-p^2+p-2=r'(2)$ is apparent.

The proof of the claim about $v$-heights is by induction. By (\ref{rprec}), $r'(k-1)-r'(k-3)=p^{k-1}(p-1)-1$. Let $D=(|z_k|-|y_{k-1}^{p-1}z_{k-1}|)/(2(p-1))=p^{k-1}(p-1)$. This is the filtration on the $z_k$-tower above the element $y_{k-1}^{p-1}z_{k-1}$. We show that $v^{i-1+D}z_k$ is divisible by $p$ if and only if $v^iz_{k-2}$ is divisible by $p$. Thus the difference of the $v$-heights in cokernels equals the difference of the corresponding $r'$ values. From Theorem \ref{extnthm}, we have
$$pv^{i-1}y_{k-1}^{p-1}z_{k-1}=v^{i-1+D}z_k+v^iy_{k-1}^{p-1}z_{k-2}^p.$$
The claim follows, since $v^iy_{k-1}^{p-1}z_{k-2}^p$ is divisible by $p$ if and only if $v^iz_{k-2}$ is, by \ref{ABdef}.

The analysis of (\ref{78}) is extremely similar.

Now $S_{k,\ell}$ becomes involved. Let $S_{k,\ell}^K=\ker(\cdot p|S_{k,\ell})$ and $S_{k,\ell}^C=\coker(\cdot p|S_{k,\ell})$. Then $S_{k,\ell}^K$ consists of $TP_{k+1}[v]\langle z_{1,\ell}\rangle$ plus $\zp$'s on $v^kz_{i,\ell}$ for $2\le i\le \ell-k$, while $S_{k,\ell}^C$ has $TP_{k+1}[v]\langle z_{\ell-k,\ell}\rangle$ plus $\zp$'s on $z_{i,\ell}$ for $1\le i<\ell-k$. Next we consider the short exact sequence
\begin{equation}\label{G4seq}0\to y_kC_k^BZ_k^\ell\mapright{\phi} G_{k,\ell}^4\mapright{\psi} y_1^{p^{k-1}-1}qS_{k,\ell}^K\to 0.\end{equation}
The map $\phi$ sends everything except the $v$-tower on $y_kz_kZ_k^\ell$ to classes with the same name, and the heights of these $v$-towers agree, as seen above. The class $y_kz_kZ_k^\ell=y_kz_{k,\ell}$ maps to a $\zp$ with the same name in $k(1)^*(K_2)$. We have $\psi(w_kw_{k+1}Z_{k+1}^\ell)=qy_1^{p^{k-1}-1}z_{1,\ell}$. Then $v^{k+1}w_kw_{k+1}Z_{k+1}^\ell\in\ker(\psi)$, and we have
$$\phi(vy_kz_{k,\ell})=v^{k+1}w_kw_{k+1}Z_{k+1}^\ell.$$
We illustrate this in the schematic Figure \ref{fig7}, in which $X$, $\circ$, and $\bullet$  map to elements with the same symbol. The expressions at the end of the $v$-towers are their $v$-heights. In particular, $v^{r'(k-1)}y_kz_{k,\ell}=0$ in $y_kC_k^BZ_k^\ell$. The $v$-heights agree by (\ref{r1}), and the gradings match by an induction proof.
The $\zp$'s in $y_1^{p^{k-1}-1}qS^K_{k,\ell}$ are hit by $\psi(y_kz_{i+k-1,\ell})=y_1^{p^{k-1}-1}qv^kz_{i,\ell}$, $2\le i\le\ell-k$, another interesting filtration jump.

\bigskip
\begin{minipage}{6in}
\begin{fig}\label{fig7}

{\bf Towers in exact sequence.}

\begin{center}

\begin{\tz}[scale=.25]

\draw (0,0) -- (13,0);
\draw (1,0) -- (13,12);
\draw (15,0) -- (32,0);
\draw (16,0) -- (32,16);
\draw (35,0) -- (40,0);
\draw (36,0) -- (39.5,3.5);
\node at (2,1) {$\circ$};
\node at (14.2,13.2) {$r'(k-1)$};
\node at (20,4) {$\circ$};
\node at (16,0) {\lb};
\node at (16,-1.6) {$w_kw_{k+1}Z_{k+1}^\ell$};
\node at (36,0) {\lb};
\node at (36,-1.4) {$y_1^{p^{k-1}-1}qz_{1,\ell}$};
\node at (40.2,4.2) {$k+1$};
\node at (22.4,4.2) {$k+1$};
\node at (33.2,17.2) {$r(k)$};
\node at (1,0) {$X$};
\node at (19,0) {$X$};
\node at (7,-3) {$y_kC_k^BZ_k^\ell$};
\node at (22,-3) {$G_{k,\ell}^4$};
\node at (40,-4.2) {$y_1^{p^{k-1}-1}qS_{k,\ell}^K$};
\node at ((14,7) {$\mapright{\phi}$};
\node at  (33,7) {$\mapright{\psi}$};
\node at (1,-1.5) {$y_kz_{k,\ell}$};
\end{\tz}
\end{center}
\end{fig}
\end{minipage}

\bigskip
Finally we consider the short exact sequence
\begin{equation}\label{G5seq}0\to y_1^{p^{k-1}-1}qS^C_{k,\ell}\mapright{\phi'}G^5_{k,\ell}\mapright{\psi'}K_k^Bz_\ell\to0.\end{equation}
Similarly to (\ref{gns}), the generators of $v$-towers in $K_k^B$ are $g_k$ and, for $1\le j<k$, elements of the form $g_jz_j^{p-1}\prod_{j+1}^{k-1}\{z_i^{p-1},y_i^{p-1}\}$. The morphism $\psi'$ is determined  by $w_j\mapsto g_j$. The $v$-heights of the corresponding elements in $k(1)^*(K_2)$ and $K_k^B$ both equal $r'(j-1)$ for $j<k$. However, the $v$-height of $w_kz_\ell$ is $r(k)$, which is $k$ greater than $r'(k-1)$. We have $\phi'(vy_1^{p^{k-1}-1}qz_{\ell-k,\ell})=v^{r'(k-1)}w_kz_\ell$. The class $y_1^{p^{k-1}-1}qz_{\ell-k,\ell}$ at the base of the $v$-tower maps to a $\zp$ with the same name. The picture is quite similar to Figure \ref{fig7} with $k+1$ and $r'(k-1)$ interchanged.

The $\zp$ classes $y_1^{p^{k-1}-1}qz_{i,\ell}$ for $1\le i<\ell-k$ are mapped by $\phi'$ to classes with the same name in $G_{k,\ell}^5\subset k(1)^*(K_2)$. The $\zp$'s in $K_k^Bz_\ell$ are of the same form as in (\ref{KkA}), and are hit by analogues of (\ref{KK}).

\section{All accounted for}\label{allsec}
In this section, we show that all elements of $k(1)^*(K_2)$ are involved in exactly one of the homomorphisms involving some $G$-group described in the preceding section. As discussed earlier, this implies that there can be no exotic extensions in $ku^*(K_2)$ other than those in (\ref{extns}), because an additional extension would decrease the number of elements in $\ker(\cdot p|ku^*(K_2))$ and $\coker(\cdot p|ku^*(K_2))$, and these must correspond to elements of $k(1)^*(K_2)$. It also provides an excellent check on our analysis.

Let $p$ be odd, and
$$G^i=\begin{cases}\ds\bigoplus_{k\ge1}G_k^i\ot TP_{p-1}[y_k]\ot P[y_{k+1}]&1\le i\le 2\\
\ds\bigoplus_{1\le k<\ell}G_{k,\ell}^i\ot TP_{p-1}[y_k]\ot P[y_{k+1}]\ot TP_{p-1}[z_\ell]\ot\L_{\ell+1}&3\le i\le 6\\
\ds\bigoplus_{k\ge1}\bigoplus_{e=1}^{p-2}G_{k,e}^i\ot P[y_k]\ot\L_{k+1}&7\le i\le8.\end{cases}$$

\begin{thm}\label{allthm} $G^1\oplus\cdots\oplus G^8$ equals $k(1)^*(K_2)$, as described in Theorem \ref{DRWthm}.\end{thm}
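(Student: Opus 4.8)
The plan is to prove Theorem \ref{allthm} by a systematic bookkeeping argument: we assemble the pieces $G^1,\dots,G^8$ according to the exact-sequence decomposition of Section \ref{LESsec}, identify each summand with an explicit piece of the list in Theorem \ref{DRWthm}, and check that every monomial-type summand of $k(1)^*(K_2)$ is hit exactly once. Since Theorem \ref{DRWthm} writes $k(1)^*(K_2)$ (modulo the trivial part) as a sum of four families of summands, and since all of our $G$-groups are kernels and cokernels of $\cdot p$ read off from $A_k$, $B_k$, $S_{k,\ell}$ via the short exact sequences (\ref{Aseq}), (\ref{10}), (\ref{78}), (\ref{G4seq}), (\ref{G5seq}), the natural strategy is to match these term by term. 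First I would dispose of the trivial $k(1)^*$-module: the $E_1$-free summand $F$ of $H^*K_2$ contributes the same trivial module to both sides, and it is disjoint from everything else, so it may be set aside.

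The main body of the argument proceeds family by family through Theorem \ref{DRWthm}. For the third line of \ref{DRWthm}: the $P[y_1]\ot\Ebar[y_0^{p-1}z_0]$ part is exactly $C_k^A-C_k^B=(y_0\cdots y_{k-1})^{p-1}z_0$ collected over $k$ and tensored with the $y$-monomials, as noted just after (\ref{Ck}); the remaining $P[y_1]\ot E[q]\ot\Ebar[z_j^p]\ot\L_{j+1}$ part of the third line is the union of the $C_k^B$-contributions to $G^2$ and $G^6$ (the $\zp$'s in $C_k^B$ listed in (\ref{Ck}), whose $k(1)^*$-names are precisely the $z_t^p$-divisible monomials) together with the coker-of-$S$ contributions $S_{k,\ell}^C$ feeding into $G^5$ via (\ref{G5seq}). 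For the second line of \ref{DRWthm}, $\bigoplus_{j\ge1}TP_{r'(j-1)}[v]\ot P[y_j]\ot E[w_j]\ot\overline{TP}_p[z_j]\ot\L_{j+1}$: the $v$-towers here are the $w_j$-towers of $v$-height $r'(j-1)$, and these correspond under (\ref{wg})-type isomorphisms to the $v$-tower generators $g_j z_j^{p-1}\prod\{z_i^{p-1},y_i^{p-1}\}$ of $K_k^A$, $K_k^B$ in (\ref{gns}), (\ref{KkA}), distributed among $G^1,G^3,G^5,G^7$; one checks the $v$-heights agree using (\ref{r1}), (\ref{r2}), (\ref{r3}) exactly as in Section \ref{LESsec}. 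The first line of \ref{DRWthm}, $\bigoplus_{j>0}TP_{r(j)}[v]\ot P[y_{j+1}]\ot TP_{p-1}[y_j]\ot\Ebar[w_j]\ot E[w_{j+1}]\ot\L_{j+1}$, contains the $w_j w_{j+1}$-type towers of height $r(j)$ — these are precisely the domains of the filtration-jumping maps $\psi$ in (\ref{G4seq}) and $\psi'$ in (\ref{G5seq}) onto the $S_{k,\ell}^K$ pieces and onto the truncated $w_kz_\ell$-towers, and the matching of $v$-heights $r(k)$ versus $k+1$ plus $r'(k-1)$ is exactly the content of Figure \ref{fig7}. Finally the $\zp$-by-$\zp$ matching: the $\zp$'s in $K_k^A,K_k^B$ of (\ref{KkA}), (\ref{KkB}) are hit by the formula (\ref{KK}), whose domain elements are the $\Ebar[w_j]$-monomials in the second/third lines of \ref{DRWthm}, and one verifies each such $k(1)^*$-monomial is used once by reading off its $z$-content.

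The cleanest way to organize the "exactly once" claim is a dimension/Poincaré-series count in each internal degree — but since everything is visibly a direct sum of named monomial-indexed $TP[v]$-modules and $\zp$'s, the honest approach is a bijection of index sets: I would exhibit, for each of the four families in Theorem \ref{DRWthm}, the partition of its indexing monomials into the blocks coming from $G^1,\dots,G^8$, using (\ref{KkA}), (\ref{KkB}), (\ref{Ck}), (\ref{Ctow}), (\ref{gns}) to name the blocks, and observe the blocks are disjoint and exhaustive. The one genuinely delicate point — and the step I expect to be the main obstacle — is the accounting at the interface between the $S_{k,\ell}$ pieces and the $B_k$, $C_k^B$ pieces inside the 10-term sequence (\ref{10}): the $v$-tower on $y_kz_{k,\ell}$ is truncated at height $r'(k-1)$ in $y_kC_k^BZ_k^\ell$ but participates via $\phi$ in $G^4_{k,\ell}$ in a way entangled with the $w_kw_{k+1}$-tower, and the single $\zp$'s $y_kz_{k,\ell}$, $y_1^{p^{k-1}-1}qz_{i,\ell}$, and the $\zp$'s in (\ref{KkB}) which are "always multiplied by higher $z$'s" must each be shown to be counted once and not double-counted against the $C_k^B$ $\zp$'s of (\ref{Ck}); resolving this requires carefully tracking which $\L_{\ell+1}$- and $z_\ell$-decorations go with which $G$-group, i.e. making sure the tensor factors $TP_{p-1}[z_\ell]\ot\L_{\ell+1}$ attached to (\ref{10}) and $\L_{k+1}$ attached to (\ref{78}) do not overlap. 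Once that interface is pinned down, the remaining verification is the routine (if lengthy) check that the $v$-heights recorded in Section \ref{LESsec} match those in Theorem \ref{DRWthm} via the recursions in Proposition \ref{rprop}, which I would present as a short table rather than spelling out every case.
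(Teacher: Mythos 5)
Your overall strategy is the paper's own: walk through the four families of Theorem \ref{DRWthm} one at a time, record which $G^i$'s contribute to each, and verify a bijection of indexing monomials. However, several of your concrete assignments are wrong in ways that would make the ``exactly once'' check fail if carried out as written. The height-$r'(j-1)$ towers in the second line of \ref{DRWthm} carry a factor $E[w_j]$, not $\Ebar[w_j]$: the $w_j$-part does come from the kernel groups $G^1,G^3,G^5,G^7$ as you say, but the part with $w_j^0=1$ must be supplied by the $v$-towers (\ref{Ctow}) of the cokernel groups $G^2,G^4,G^6,G^8$, which you leave entirely unmatched (you use only their $\zp$'s from (\ref{Ck})). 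Similarly, the height-$r(j)$ towers of the first line need $w_j$ itself in $G^1$, in addition to the $w_jw_{j+1}$- and $w_jz_\ell$-towers seen through $G^4$ and $G^5$; and the $q$-divisible $\zp$'s of the last family arise from eight distinct sources --- the images of (\ref{KK}) in the kernel $\zp$'s (\ref{KkA}), (\ref{KkB}) inside $G^1$, $G^3$, $G^5$ and $G^7$, together with $\im(\phi')$ in $G^5$ --- not just from $S_{k,\ell}^C\to G^5$. (Your remark locating the domain of (\ref{KK}) among ``$\Ebar[w_j]$-monomials'' is also off: those domain classes involve $q$ and live in the last line of \ref{DRWthm}.)

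The larger omission is that you never supply the combinatorial identity on which exhaustiveness rests. The substance of the proof is that, once the assignments are made, the sum over $k$ of (all monomials in $TP_p[y_k,z_k]$ other than $y_k^{p-1}$ and $z_k^{p-1}$) tensored with $\prod_{i}\{z_i^{p-1},y_i^{p-1}\}$ and $P[y_{k+1}]\ot\L_{k+1}$ reassembles to $P[y_{j+1}]\ot\L_{j+1}$ --- the partition of monomials by the first index at which the factor is not a $(p-1)$st power --- together with a parallel telescoping argument for the $q$-classes using $\bigoplus_\ell Z_{j+1}^\ell TP_{p-1}[z_\ell]\ot\L_{\ell+1}=\L_{j+1}$ and the $Y(s)$ decomposition of $P[y_1]$. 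Saying ``exhibit the partition and observe the blocks are disjoint and exhaustive'' names the goal but not the mechanism, and the interface you single out as the main obstacle (the $S_{k,\ell}$ versus $C_k^B$ matching inside (\ref{10})) is in fact dispatched by the tower-height comparison of Figure \ref{fig7}; the genuinely laborious step is the eight-fold enumeration and recombination of the $q$-divisible $\zp$'s at the end of the argument.
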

As throughout the paper, $\zp$'s coming from $E_1$-free submodules of $H^*(K_2)$ are ignored here. The remainder of this section is devoted to the proof of Theorem \ref{allthm}. There are four parts of Theorem \ref{DRWthm}. We deal with them one-at-a-time.

{\bf Case} 1. $P[y_1]y_0^{p-1}z_0$. In (\ref{Pyy}), it is shown that these classes form a subset of $\bigoplus \M_k^AA_k$, and they map to classes with the same name in $G^2$.

{\bf Case} 2. $\bigoplus_{j>0}TP_{r(j)}[v]\ot P[y_{j+1}]\ot TP_{p-1}[y_j]\ot \Ebar[w_{j}]\ot E[w_{j+1}]\ot \L_{j+1}$. The generators of $v$-towers of height $r(j)$ occur in $G^1$, $G^4$, and $G^5$. From (\ref{wg}), only $w_j$ is in $G_j^1$. So $G^1$ has $TP_{p-1}[y_j]\ot P[y_{j+1}]w_j$. From Figure \ref{fig7}, $G^4_{j,\ell}$ has $w_jw_{j+1}Z_{j+1}^\ell$. Note that $\bigoplus_\ell Z_{j+1}^\ell TP_{p-1}[z_\ell]\ot \L_{\ell+1}=\L_{j+1}$, since the $\ell$-component gives the monomials whose smallest non-$(p-1)$-power is a power of $z_\ell$, so $G^4$ contains $P[y_{j+1}]\ot TP_{p-1}[y_j]w_{j}w_{j+1}\ot \L_{j+1}$. From the analysis following (\ref{G5seq}), $G^5_{j,\ell}$ has only $w_jz_\ell$ of $v$-height $r(j)$, so $G^5$ will have $P[y_{j+1}]\ot TP_{p-1}[y_j]w_{j}\ot \Lbar_{j+1}$. Thus $G^1\oplus G^5$ contains the part without $w_{j+1}$, while $G^4$ contains the part with $w_{j+1}$.

{\bf Case} 3. $\bigoplus_{j\ge1}TP_{r'(j-1)}[v]\ot P[y_{j}]\ot E[w_{j}]\ot\overline{TP}_p[z_{j}]\ot \L_{j+1}$. The generators of $v$-towers of height $r'(j-1)$ occur in each $G^i$ as follows. \begin{itemize}
\item [$G^1$:] $\dstyle{w_jz_j^{p-1}\bigoplus_{k\ge j+1}TP_{p-1}[y_k]\ot P[y_{k+1}]\ot\bigoplus_{i=j+1}^{k-1}\{z_i^{p-1},y_i^{p-1}\}}$. This can be deduced from (\ref{wg}).
\item[$G^2$:] From (\ref{Ctow}),
$$z_jTP_{p-1}[y_j]\ot P[y_{j+1}]\oplus y_j^{p-1}z_j\bigoplus_{k\ge j+1}TP_{p-1}[y_k]\ot P[y_{k+1}]\ot\prod_{i=j+1}^{k-1}\{z_i^{p-1},y_i^{p-1}\}.$$
\item[$G^3$:] We use (\ref{gns}) and (\ref{wg}) and adapt some arguments used in Case 2 to obtain
$$w_jz_j^{p-1}\biggl(\overline{TP}_p[y_j]\ot P[y_{j+1}]\ot\L_{j+1}\oplus\bigoplus_{k\ge j+1}\overline{TP}_p[y_k]P[y_{k+1}]z_k^{p-1}\L_{k+1}\prod_{i=j+1}^{k-1}\{z_i^{p-1},y_i^{p-1}\}\biggr).$$
\item[$G^4$:] We use (\ref{Ctow}) and (\ref{G4seq}) to obtain
$$y_j^{p-1}z_j\bigoplus_{k\ge j+1}\overline{TP}_p[y_k]\ot P[y_{k+1}]z_k^{p-1}\L_{k+1}\prod_{i=j+1}^{k-1}\{z_i^{p-1},y_i^{p-1}\}.$$
\item[$G^5$:] We use (\ref{G5seq}) and $\bigoplus_{\ell>k}z_\ell TP_{p-1}[z_\ell]\ot\L_{\ell+1}\approx\Lbar_{k+1}$ to obtain
$$w_jz_j^{p-1}\bigoplus_{k\ge j+1}TP_{p-1}[y_k]\ot P[y_{k+1}]\ot\Lbar_{k+1}\ot\prod_{i=j+1}^{k-1}\{z_i^{p-1},y_i^{p-1}\}.$$
\item[$G^6$:] We combine the analysis for $G^2$ and the observation used for $G^5$ to obtain
\begin{eqnarray*}&&z_jTP_{p-1}[y_j]\ot P[y_{j+1}]\ot\Lbar_{j+1}\\
&\oplus& y_j^{p-1}z_j\bigoplus_{k\ge j+1}TP_{p-1}[y_k]\ot P[y_{k+1}]\ot\Lbar_{k+1}\ot\prod_{i=j+1}^{k-1}\{z_i^{p-1},y_i^{p-1}\}\end{eqnarray*}
\item[$G^7$:] Similarly to $G^3$, we have
$$\bigoplus_{e=1}^{p-2}\biggl(w_jz_j^e\ot P[y_j]\ot\L_{j+1}\oplus w_jz_j^{p-1}\bigoplus_{k\ge j+1}z_k^e\ot P[y_k]\ot\L_{k+1}\ot\prod_{i=j+1}^{k-1}\{z_i^{p-1},y_i^{p-1}\}\biggr).$$
\item[$G^8$:] Using (\ref{Ctow}), we get
$$\bigoplus_{e=1}^{p-2}\biggl(z_j^{e}\ot P[y_j]\ot\L_{j+1}\oplus y_j^{p-1}z_j\bigoplus_{k\ge j+1}z_k^e\ot P[y_k]\ot\L_{k+1}\ot\prod_{i=j+1}^{k-1}\{z_i^{p-1},y_i^{p-1}\}\biggr).$$
\end{itemize}

We begin by analyzing the portion including the factor $w_j$. We will show that $$G^1\oplus G^3\oplus G^5\oplus G^7= P[y_j]w_j\ot\overline{TP}_p[z_j]\ot\L_{j+1}.$$
Here, and in the remainder of our analysis of Case 3, $G^i$ refers just to the relevant portion of $G^i$, here the part with $TP_{r'(j-1)}[v]w_j$.
 The first part of $G^7$ gives all terms with $z_j^e$ for $1\le e\le p-2$. The remaining part has factors $w_jz_j^{p-1}$, which we will omit writing. Combining $G^1$ and $G^5$ removes the bar in $G^5$. The first part of $G^3$ gives the part with positive exponent of $y_j$, which we now omit.

Let $E_\ell=P[y_\ell]\ot\L_\ell$, thought of as monomials in $y_i$ and $z_i$ for $i\ge\ell$ with exponents $\le p-1$. The remaining parts of the $G^i$'s under consideration combine to
\begin{equation}\label{big}\bigoplus_{k\ge j+1}\biggl(TP_{p-1}[y_k]\oplus y_kz_k^{p-1}TP_{p-1}[y_k]\oplus\bigoplus_{e=1}^{p-2}z_k^eTP_p[y_k]\biggr)\ot E_{k+1}\ot\prod_{i=j+1}^{k-1}\{z_i^{p-1},y_i^{p-1}\}.\end{equation}
We wish to show this equals $E_{j+1}$. The portion in parentheses is all monomials in $TP_p[y_k,z_k]$ except $y_k^{p-1}$ and $z_k^{p-1}$. For a monomial $M$ in $E_{j+1}$, let $M_i$ denote its $y_i^sz_i^t$ factor. The $k$-summand in (\ref{big}) is all monomials $M$ in $E_{j+1}$ for which $k$ is the smallest $i$ such that $M_i$ is neither $y_i^{p-1}$ nor $z_i^{p-1}$. Thus the sum over all $k$ yields all of $E_{j+1}$, as claimed.

A very similar argument shows that the $G^2\oplus G^4\oplus G^6\oplus G^8$ part for Case 3 equals the portion which includes just the 1 in $E[w_j]$; i.e., $P[y_j]\ot \overline{TP}_p[z_j]\ot\L_{j+1}$.

\medskip
{\bf Case} 4. $\bigoplus_{j\ge1} P[y_1]\ot E[q ]\ot \Ebar[z_j^p]\ot \L_{j+1}$. We first consider the part without the $q$, and fix $j$ and omit writing the $z_j^p$. The desired answer is $P[y_1]\ot\L_{j+1}$. These come from the $\zp$'s in $G^2\oplus G^4\oplus G^6\oplus G^8$. Similarly to Case 3, $G^2$ and $G^6$ combine to give
$$\bigoplus_{k\ge j+1}TP_{p-1}[y_k]\ot P[y_{k+1}]\ot\L_{k+1}\ot\prod_{i=k+1}^{j-1}\{z_i^{p-1},y_i^{p-1}\}.$$
This, together with the portion of $G^4$ from $\im(\phi)$ in (\ref{G4seq}) obtained using (\ref{Ck}), and the $\zp$'s in $G^8$ obtained using (\ref{Ck}) give exactly (\ref{big}), which we showed equals $P[y_{j+1}]\ot\L_{j+1}$.\footnote{Here the classes in (\ref{big}) are $\zp$'s and are multiplied by $z_j^p$, whereas in Case 3 they were multiplied by $w_jz_j^{p-1}$ and were generators of $v$-towers of height $r'(j-1)$.} The element $X$ in Figure \ref{fig7} with $k$ replaced by $j$ yields, from $G^4$,
\begin{eqnarray*}&&y_jTP_{p-1}[y_j]\ot P[y_{j+1}]\ot\bigoplus_{\ell>j}Z_{j+1}^\ell TP_{p-1}[z_\ell]\ot\L_{\ell+1}\\
&=&y_jTP_{p-1}[y_j]\ot P[y_{j+1}]\ot\L_{j+1},\end{eqnarray*}
which combines with the portion just obtained to yield $P[y_j]\ot\L_{j+1}$.

The last line of the $G^4_{k,\ell}$ discussion in Section \ref{LESsec} describes $\zp$'s in $G^4$ mapped by $\psi$ in (\ref{G4seq}). Those with a $z_j^p$ factor yield
\begin{eqnarray*}&&\bigoplus_{k=1}^{j-1}y_kTP_{p-1}[y_k]P[y_{k+1}]\bigoplus_{\ell>j}Z_{j+1}^\ell TP_{p-1}[z_\ell]\L_{\ell+1}\\
&=&\bigoplus_{k=1}^{j-1}(P[y_k]-P[y_{k+1}])\ot\L_{j+1}\\
&=&(P[y_1]-P[y_j])\ot\L_{j+1}.\end{eqnarray*}
Combining this with the result of the preceding paragraph yields the desired $P[y_1]\ot\L_{j+1}$.

\medskip
We finish this section by showing that the $\zp$'s including a factor $q$ are obtained exactly once. We omit writing the $q$. The classes which we must obtain are $P[y_1]\bigoplus_{j\ge1}z_j^p\L_{j+1}$.
There are eight ways these appear in $G^i$-sets.
\begin{enumerate}
\item In $G^1$, using (\ref{KkA}) and (\ref{KK}), for $1\le i<j<k$,
$$y_1^{p^{j-1}-1}z_{i,j}z_j^{p-2}\prod_{s=j+1}^{k-1}\{z_s^{p-1},y_s^{p-1}\}\ot TP_{p-1}[y_k]\ot P[y_{k+1}].$$
\item In $G^3$, using (\ref{KkB}) and (\ref{KK}), for $1\le i<k<\ell$,
$$y_1^{p^{k-1}-1}y_kz_{i,k}z_k^{p-2}Z_{k+1}^\ell\ot TP_{p-1}[y_k]\ot P[y_{k+1}]\ot TP_{p-1}[z_\ell]\ot \L_{\ell+1}.$$
\item In $G^3$, using (\ref{KkA}) and (\ref{KK}), for $1\le i<j<k<\ell$,
$$y_1^{p^{j-1}-1}y_kz_{i,j}z_j^{p-2}\prod_{s=j+1}^{k-1}\{z_s^{p-1},y_s^{p-1}\}Z_k^\ell\ot TP_{p-1}[y_k]\ot P[y_{k+1}]\ot TP_{p-1}[z_\ell]\ot \L_{\ell+1}.$$
\item From $\im(\phi')$ in (\ref{G5seq}), for $1\le k<\ell$ and $1\le i\le \ell-k$,
$$y_1^{p^{k-1}-1}z_{i,\ell}\ot TP_{p-1}[y_k]\ot P[y_{k+1}]\ot TP_{p-1}[z_\ell]\ot \L_{\ell+1}.$$
\item From $\psi'$ in (\ref{G5seq}), using (\ref{KkB}) and (\ref{KK}), for $k<\ell$ and $\ell-k<i<\ell$,
$$y_1^{p^{k-1}-1}z_{i,\ell}\ot TP_{p-1}[y_k]\ot P[y_{k+1}]\ot TP_{p-1}[z_\ell]\ot \L_{\ell+1}.$$
\item From $\psi'$ in (\ref{G5seq}), using (\ref{KkA}) and (\ref{KK}), for $i<j<k<\ell$,
$$y_1^{p^{j-1}-1}z_{i,j}z_j^{p-2}\prod_{s=j+1}^{k-1}\{z_s^{p-1},y_s^{p-1}\}\cdot z_\ell\ot TP_{p-1}[y_k]\ot P[y_{k+1}]\ot TP_{p-1}[z_\ell]\ot \L_{\ell+1}.$$
\item From (\ref{78}), using (\ref{KkB}) and (\ref{KK}), for $i<k$ and $1\le e\le p-2$,
$$y_1^{p^{k-1}-1}z_{i,k}z_k^{e-1}P[y_k]\ot\L_{k+1}.$$
\item From (\ref{78}), using (\ref{KkA}) and (\ref{KK}), for $i<j<k$ and $1\le e\le p-2$,
$$y_1^{p^{j-1}-1}z_{i,j}z_j^{p-2}\prod_{s=j+1}^{k-1}\{z_s^{p-1},y_s^{p-1}\}\cdot z_k^eP[y_k]\ot\L_{k+1}.$$
\end{enumerate}

\medskip
First combine (1)+(6) to put a $\ot\L_{k+1}$ at the end of (1), and then,
similarly to the simplification of (\ref{big}), combine with (3)+(8) to get
\begin{equation}\label{S1}\bigoplus_{i<j}y_1^{p^{j-1}-1}P[y_{j+1}]z_{i,j}z_j^{p-2}\L_{j+1}.\end{equation}
We combine and relabel (4)+(5) to give
\begin{equation}\label{S2}\bigoplus_{i<j}y_1^{p^{j-1}-1}TP_{p-1}[y_j]P[y_{j+1}]z_{i,j+1}\L_{j+1}\end{equation}
together with
\begin{equation}\label{S4}\bigoplus_{i\ge j\ge1}y_1^{p^{j-1}-1}TP_{p-1}[y_j]P[y_{j+1}]z_i^p\L_{i+1}.\end{equation}
Let $Y(s)=y_1^{p^s-1}TP_{p-1}[y_{s+1}]P[y_{s+2}]=\langle y_1^i:\nu(i+1)=s\rangle$. Then (\ref{S4}) is
\begin{equation}\label{name}\bigoplus_{i>s\ge0}Y(s)z_i^p\L_{i+1}.\end{equation}
We simplify and relabel (2) to
\begin{equation}\label{S3}\bigoplus_{i<j}y_1^{p^{j-1}-1}y_jTP_{p-1}[y_j]P[y_{j+1}]z_{i,j}z_j^{p-2}\L_{j+1}.\end{equation}
(\ref{S1}), (\ref{S3}), and (7) combine to give
$$\bigoplus_{i<j}y_1^{p^{j-1}-1}P[y_j]z_{i,j}TP_{p-1}[z_j]\L_{j+1}=\bigoplus_{i\le j-1\le t}Y(t)z_{i,j}TP_{p-1}[z_j]\L_{j+1}.$$
For any $t\ge i$, the coefficient of $Y(t)z_i^p$ in (\ref{S2}) plus this is
$$Z_{i+1}^{t+2}\L_{t+2}\oplus\bigoplus_{j=i+1}^{t+1}Z_{i+1}^jTP_{p-1}[z_j]\L_{j+1}=\L_{i+1},$$
as the second part has all monomials not divisible by $Z_{i+1}^{t+2}$. Combining this with (\ref{name}) yields the desired result,
$$\bigoplus_{s\ge0}Y(s)\bigoplus_{i\ge1}z_i^p\L_{i+1}.$$

\section{An explanation of self-duality of $B_k$}\label{optsec}
In this optional section, we discuss some observations about the ASS of $ku^*(K_2)$ and $ku_*(K_2)$ which, among other things, provide an explanation of the self-dual nature of the $B_k$ summands which occur in both $ku^*(K_2)$ and $ku_*(K_2)$. We restrict to $p=2$.

We first observe that, for $k\ge1$, there is an $E_1$-submodule, $\cM_k$, of $H^*(K_2)$ such that $\ext_{E_1}(\zt,\cM_k)$ (resp.~$\ext_{E_1}(\cM_k,\zt)$) is closed under the differentials in the ASS converging to $ku^*(K_2)$ (resp.~$ku_*(K_2)$), yielding the chart $A_k$ (resp.~the $ku$-homology analogue of $A_k$ discussed in Theorem \ref{ku*thm}). For example, with $M_j$ as in (\ref{Mdef}) and $N$ as in Figure \ref{N}, $\cM_3$ is as depicted in Figure \ref{M5pic}.

\bigskip
\begin{minipage}{6in}
\begin{fig}\label{M5pic}

{\bf The $E_1$-module $\cM_3$.}

\begin{center}

\begin{\tz}[scale=.3]

\draw (4,0) -- (6,0);
\draw (8,0) -- (10,0);
\draw (0,0) to[out=45, in=135] (6,0);
\draw (4,0) to[out=315, in=225] (10,0);
\node at (0,-.7) {$17$};
\node at (18,.7) {$26$};
\node at (32,-1) {$33$};
\node at (26,.7) {$30$};
\node at (38,-1) {$36$};
\node at (10,.7) {$22$};
\node at (0,0) {\lb};
\node at (-2,0) {\lb};
\node at (4,0) {\lb};
\node at (6,0) {\lb};
\node at (8,0) {\lb};
\node at (10,0) {\lb};
\node at (16,0) {\lb};
\node at (18,0) {\lb};
\node at (26,0) {\lb};
\node at (28,0) {\lb};
\node at (32,0) {\lb};
\node at (34,0) {\lb};
\node at (36,0) {\lb};
\node at (38,0) {\lb};
\draw (16,0) -- (18,0);
\draw (26,0) -- (28,0);
\draw (32,0) -- (34,0);
\draw (36,0) -- (38,0);
\draw (32,0) to[out=45, in=135] (38,0);
\node at (-2,-2) {$y_1^4$};
\node at (5,-2) {$y_1^3N$};
\node at (17,-2) {$y_1^2M_4$};
\node at (27,-2) {$y_1x_9M_4$};
\node at (34.5,-2) {$M_5$};
\end{\tz}
\end{center}
\end{fig}
\end{minipage}

\bigskip
\ni The two ASSs for $\cM_3$ will yield the charts for $A_3$ and its homology analogue pictured in \cite{DD}.

The situation for $B_k$ is slightly more complicated. There is no $E_1$-submodule of $H^*(K_2)$ which, by itself, can give a chart $B_kz_\ell$. Some of the differentials that truncate $v$-towers in $B_kz_\ell$ come from classes that are part of a summand that includes $y_1^{2^{k-1}-1}qS_{k,\ell}$. We find that, for $2\le k<\ell$, there is an $E_1$-submodule $\cM_{k,\ell}$  of $H^*K_2$ such that $\ext_{E_1}(\zt,\cM_{k,\ell})$ is closed under the differentials in the ASS converging to $ku^*(K_2)$ and yields the chart
$$B_kz_\ell\oplus y_1^{2^{k-1}-1}qS_{k,\ell}\oplus y_kB_kZ_k^\ell.$$
 Note that these three subsets of $ku^*(K_2)$ appeared together in the 10-term exact sequence (\ref{10}).

This $\cM_{k,\ell}$ is symmetric; i.e., there is an integer $D$ such that $\Sigma^D\cM_{k,\ell}^*$ and $\cM_{k,\ell}$ are isomorphic $E_1$-modules, where $\cM_{k,\ell}^*$ is obtained from $\cM_{k,\ell}$ by negating gradings and dualizing $Q_0$ and $Q_1$. This implies that the $v$-towers in $\ext_{E_1}(\zt,\cM_{k,\ell})$ and $\ext_{E_1}(\cM_{k,\ell},\zt)$ correspond nicely. Moreover, the differentials in the two ASSs correspond, too, obtaining isomorphic charts, although the gradings in one  decrease from left to right, while in the other they increase.

We illustrate with an example, $\cM_{3,4}$, and then discuss the implication for self-duality of $B_k$.
In Figure \ref{56}, we depict $\cM_{3,4}$.

\bigskip
\begin{minipage}{6in}
\begin{fig}\label{56}

{\bf The $E_1$-module $\cM_{3,4}$.}

\begin{center}

\begin{\tz}[scale=.24]

\draw (0,0) -- (2,0);
\draw (4,0) -- (6,0);
\draw (0,0) to[out=45, in=135] (6,0);
\draw (26,2) to[out=315, in=225] (32,2);
\draw (22,2) to[out=45, in=135] (28,2);
\draw (36,0) to[out=315, in=225] (42,0);
\draw (58,0) to[out=45, in=135] (64,0);
\node at (0,-1) {$70$};
\node at (10,-1) {$75$};
\node at (20,-1) {$80$};
\node at (52,-1) {$96$};
\node at (64,-1) {$102$};
\node at (42,1) {$91$};
\node at (0,0) {\lb};
\node at (2,0) {\lb};
\node at (4,0) {\lb};
\node at (6,0) {\lb};
\node at (12,0) {\lb};
\node at (10,0) {\lb};
\node at (20,0) {\lb};
\node at (22,0) {\lb};
\node at (40,0) {\lb};
\node at (42,0) {\lb};
\node at (32,0) {\lb};
\node at (34,0) {\lb};
\node at (36,0) {\lb};
\node at (38,0) {\lb};
\node at (32,1) {$86$};
\node at (52,0) {\lb};
\node at (54,0) {\lb};
\node at (58,0) {\lb};
\node at (60,0) {\lb};
\node at (62,0) {\lb};
\node at (64,0) {\lb};
\node at (22,2) {\lb};
\node at (24,2) {\lb};
\node at (26,2) {\lb};
\node at (28,2) {\lb};
\node at (30,2) {\lb};
\node at (32,2) {\lb};
\node at (42,2) {\lb};
\node at (44,2) {\lb};
\draw (10,0) -- (12,0);
\draw (20,0) -- (22,0);
\draw (32,0) -- (34,0);
\draw (36,0) -- (38,0);
\draw (32,0) to[out=45, in=135] (38,0);
\node at (3,-3) {$y_1^7x_9M_5$};
\node at (11,-3) {$y_1^6z_3M_4$};
\node at (20,-3) {$y_1^5x_9z_3M_4$};
\node at (27,-3) {$y_1^4M_6$};
\node at (36,-3) {$y_1^3x_9M_6$};
\node at (43,-3) {$y_1^2z_4M_4$};
\node at (53,-3) {$y_1x_9z_4M_4$};
\node at (61,-3) {$z_4M_5$};
\draw (22,2) -- (24,2);
\draw (26,2) -- (28,2);
\draw (30,2) -- (32,2);
\draw (42,2) -- (44,2);
\draw (40,0) -- (42,0);
\draw (52,0) -- (54,0);
\draw (58,0) -- (60,0);
\draw (62,0) -- (64,0);
\end{\tz}
\end{center}
\end{fig}
\end{minipage}

\bigskip

In Figure \ref{MASS}, we depict the ASS chart for both $\ext_{E_1}(\zt,\cM_{3,4})$ and $\ext_{E_1}(\cM_{3,4},\zt)$. They are isomorphic except that, from left to right, the gradings start with 102 for the first and 70 for the second. We label the portions of the chart corresponding to the eight summands of $\cM_{3,4}$ just by the $M$-factor, since accompanying factors differ for the two versions. For example, the $M_5$ on the left-hand side is $z_4M_5$ for the first spectral sequence, and is $y_1^7x_9M_5$ for the second.

\bigskip

\tikzset{
  testpic4/.pic={
\node at (0,0) {\lb};
\node at (2,0) {\lb};
\node at (2,1) {\lb};
\node at (4,2) {\lb};
\node at (6,3) {\lb};
\node at (4,1) {\lb};
\node at (6,2) {\lb};
\node at (8,3) {\lb};
\node at (10,4) {\lb};
\node at (12,5) {\lb};
\node at (14,6) {\lb};
\node at (5,0) {\lb};
\node at (7,1) {\lb};
\node at (10,0) {\lb};
\node at (12,1) {\lb};
\node at (14,2) {\lb};
\node at (16,3) {\lb};
\node at (11,0) {\lb};
\node at (13,1) {\lb};
\node at (15,2) {\lb};
\node at (17,3) {\lb};
\node at (19,4) {\lb};
\node at (21,5) {\lb};
\node at (13,0) {\lb};
\node at (15,1) {\lb};
\node at (17,2) {\lb};
\node at (15,0) {\lb};
\node at (17,1) {\lb};
\node at (16,0) {\lb};
\node at (18,1) {\lb};
\node at (20,2) {\lb};
\node at (22,3) {\lb};
\node at (18,0) {\lb};
\node at (20,1) {\lb};
\node at (22,2) {\lb};
\node at (24,3) {\lb};
\node at (26,4) {\lb};
\node at (28,5) {\lb};
\node at (30,6) {\lb};
\node at (20,0) {\lb};
\node at (22,1) {\lb};
\node at (21,0) {\lb};
\node at (23,1) {\lb};
\node at (26,0) {\lb};
\node at (28,1) {\lb};
\node at (30,2) {\lb};
\node at (32,3) {\lb};
\node at (29,0) {\lb};
\node at (31,1) {\lb};
\node at (31,0) {\lb};
\node at (33,1) {\lb};
\draw [thick] (0,0) -- (2,1) -- (2,0) -- (10,4);
\draw [thick] (10,0) -- (12,1);
\draw [thick] (11,0) -- (17,3);
\draw [thick] (16,0) -- (18,1) -- (18,0) -- (26,4);
\draw [thick] (26,0) -- (28,1);
\draw [color=red] (10,0) -- (10,4);
\draw [color=red] (26,0) -- (26,4);
\draw [dashed] (2,1) -- (6,3);
\draw [dashed] (5,0) -- (7,1);
\draw [dashed] (10,4) -- (16,7);
\draw [dotted] (5,0) -- (4,2);
\draw [dotted] (7,1) -- (6,3);
\draw [dashed] (12,1) -- (16,3);
\draw [dashed] (13,0) -- (17,2);
\draw [dashed] (15,0) -- (17,1);
\draw [dotted] (13,0) -- (12,5);
\draw [dotted] (15,1) -- (14,6);
\draw [dotted] (15,0) -- (14,2);
\draw [dotted] (17,1) -- (16,3);
\draw [dashed] (13,0) -- (13,1);
\draw [dashed] (15,0) -- (15,2);
\draw [dashed] (17,1) -- (17,3);
\draw [dotted] (17,2) -- (16,7);
\draw [dashed] (17,3) -- (21,5);
\draw [dashed] (18,1) -- (22,3);
\draw [dashed] (20,0) -- (22,1);
\draw [dashed] (21,0) -- (23,1);
\draw [dashed] (20,0) -- (20,2);
\draw [dashed] (22,1) -- (22,3);
\draw [dotted] (20,0) -- (19,4);
\draw [dotted] (22,1) -- (21,5);
\draw [dotted] (21,0) -- (20,2);
\draw [dotted] (23,1) -- (22,3);
\draw [dashed] (26,4) -- (30,6);
\draw [dashed] (28,1) -- (32,3);
\draw [dashed] (29,0) -- (31,1);
\draw [dashed] (31,0) -- (33,1);
\draw [dashed] (31,0) -- (31,1);
\draw [dotted] (29,0) -- (28,5);
\draw [dotted] (31,1) -- (30,6);
\draw [dotted] (31,0) -- (30,2);
\draw [dotted] (33,1) -- (32,3);
\node at (0,-.6) {$102$};
\node at (0,-1.2) {$70$};
\node at (2,-.6) {$M_5$};
\node at (6,-.6) {$M_4$};
\node at (9.6,-.6) {$92$};
\node at (9.6,-1.2) {$80$};
\node at (11.2, -.6) {$M_4$};
\node at (14,-.6) {$M_6$};
\node at (16,-.6) {$86$};
\node at (16,-1.2) {$86$};
\node at (19,-.6) {$M_6$};
\node at (22,-.6) {$M_4$};
\node at (25.6,-.6) {$76$};
\node at (25.6,-1.2) {$96$};
\node at (27.2, -.6) {$M_4$};
\node at (30,-.6) {$M_5$};
\draw [color=blue] (-1,0) -- (33,0);
}}
\bigskip
\begin{minipage}{6in}
\begin{fig}\label{MASS}

{\bf Two ASSs for $\cM_{2,3}$.}
\begin{center}
\begin{tikzpicture}
  \pic[rotate=90,scale=.6,transform shape] {testpic4};
\end{tikzpicture}
\end{center}
\end{fig}
\end{minipage}

\bigskip

For the $ku^*(K_2)$ version, $B_3z_4$ is on the left hand side of Figure \ref{MASS}, and $y_3B_3z_3$ on the right hand side, with $y_1^3qS_{3,4}$ separating them. The duality isomorphism in Theorem \ref{dual} says that the Pontryagin dual of $B_3z_4$ is isomorphic as a $ku_*$-module to $\Sigma^4$ of the right hand side of the $ku_*(K_2)$ version of Figure \ref{MASS}, and we see that this is isomorphic to a shifted version of $B_3$ with indices negated. This is the self-duality statement, that the Pontryagin dual of $B_k$ is isomorphic as a $ku_*$-module to a shifted version of $B_k$ with indices negated.

\def\line{\rule{.6in}{.6pt}}

\end{document}